\newcommand{\df}[1]{\ensuremath{\mathrm{df}({#1})}\xspace}
\newcommand{\core}{{\mathrm{core}}}
\newcommand{\Col}{{\mathrm{Col}}}
\newcommand{\odd}{\operatorname{odd}\xspace}
\newcommand{\scc}{\operatorname{scc}\xspace}
\theoremstyle{plain}
\newtheorem{theorem}{Theorem}[section]
\newtheorem{lemma}[theorem]{Lemma}
\newtheorem{corollary}[theorem]{Corollary}
\newtheorem{conjecture}{Conjecture}
\theoremstyle{definition}
\newtheorem{remark}[theorem]{Remark}
\newtheorem*{example*}{Example}
  \title[Cubic
  graphs of colouring defect 3 and conjectures of Berge and
  Alon-Tarsi]{Cubic
  graphs of colouring defect 3 and\\conjectures of Berge and
  Alon-Tarsi}  %
\author[J. Karab\'a\v{s}]{J\'an Karab\'a\v{s}}
\email[J. Karab\'a\v{s}]{karabas@savbb.sk}
\author[E. M\'a\v cajov\'a]{Edita M\'a\v cajov\'a}
\email[E. M\'a\v cajov\'a]{macajova@dcs.fmph.uniba.sk}
\author[R. Nedela]{Roman Nedela}
\email[R. Nedela]{nedela@savbb.sk}
\author[M. \v Skoviera]{Martin \v Skoviera}
\email[M. \v Skoviera]{skoviera@dcs.fmph.uniba.sk}
\address[J. Karab\'a\v{s}]{
Institute of Computer Science and Mathematics, FEEIT,
Slovak University of Technology, Bratislava, Slovakia
}
\address[E. M\'a\v cajov\'a, M. \v Skoviera]{
Comenius University, Bratislava, Slovakia
}
\address[R. Nedela]{
Faculty of Applied Sciences, University of West Bohemia,
Pilsen, Czech Republic}
\address[J. Karab\'a\v{s}, R. Nedela]{
Mathematical Institute of Slovak Academy of Sciences,
Bansk\'a Bystrica, Slovakia
}
\begin{document}
\maketitle

\begin{abstract}
We study two measures of uncolourability of cubic graphs, their
colouring defect and perfect matching index. The colouring
defect of a cubic graph $G$ is the smallest number of edges
left uncovered by three perfect matchings; the perfect matching
index of $G$ is the smallest number of perfect matchings that
together cover all edges of~$G$. We provide a complete
characterisation of cubic graphs with colouring defect $3$
whose perfect matching index is greater or equal to $5$. The
result states that every such graph arises from the Petersen
graph with a fixed $6$-cycle $C$ by substituting edges or
vertices outside $C$ with suitable $3$-edge-colourable cubic
graphs. Our research is motivated by two deep and long-standing
conjectures, Berge's conjecture stating that five perfect
matchings are enough to cover the edges of any bridgeless cubic
graph and the shortest cycle cover conjecture of Alon and Tarsi
suggesting that every bridgeless graph can have its edges
covered with cycles of total length at most $7/5\cdot m$, where
$m$ is the number of edges. We apply our characterisation to
showing that every cubic graph with colouring defect~$3$ admits
a cycle cover of length at most $4/3\cdot m +1$, where $m$ is
the number of edges, the bound being achieved by the graphs
whose perfect matching index equals $5$. We further prove that
every snark containing a $5$-cycle with an edge whose
endvertices removed yield a $3$-edge-colourable graph has a
cycle cover of length at most $4/3\cdot m+1$, as well.

\medskip\noindent\textbf{Keywords:}\\
cubic graph, edge colouring, nowhere-zero flow, snark, perfect matching, cycle cover

\smallskip\noindent\textbf{2020 MSC:}\\
05C15, 05C21, 05C70, 05C75
\end{abstract}

\section{Introduction}
\noindent{}Many deep problems in graph theory, including
celebrated long-standing conjectures about cycles, perfect
matchings, and flows, can be reduced to cubic graphs. The
validity of these conjectures often depends upon a fairly
narrow class of $2$-connected cubic graphs that do not admit a
proper $3$-edge-colouring, the \emph{snarks}. In this paper, we
focus on properties of snarks that can be expressed in terms of
perfect matchings and cycles. In particular, we deal with two
important conjectures, the Berge conjecture about the size of a
perfect-matching cover and the $7/5$-conjecture about the
length of a shortest cycle cover.


\textbf{Berge's conjecture.} According to a stronger form of
Petersen's perfect matching theorem every edge of a bridgeless cubic graph
belongs to a perfect matching. It follows that the edges of any
bridgeless cubic graph $G$ can be covered with a collection of
perfect matchings. The minimum number of perfect matchings
needed to cover the edges of $G$ is called the \emph{perfect
matching index} of $G$, denoted by $\pi(G)$. Berge's conjecture
(circa 1970) suggests that $\pi(G)\leq 5$ for every bridgeless
cubic graph $G$. However, neither a constant upper bound for
$\pi(G)$ nor an example with $\pi(G)>5$ are currently known.


\textbf{The 7/5-conjecture.} In 1985, Alon and Tarsi \cite{AT},
and independently Jaeger \cite{Raspaud-diss}, conjectured that
every bridgeless graph can have its edges covered with a
collection of cycles of total length at most $7/5\cdot m$ where
$m$ is the number of edges. This conjecture is surprisingly
strong because, if true, it would imply the validity of the
cycle double cover conjecture (Jamshy and Tarsi \cite{JT}). The
conjecture is best possible because it is attained for the
Petersen graph. Fleischner's splitting lemma
\cite[Lemma~III.26]{F} implies that the $7/5$-conjecture
reduces to bridgeless subcubic graphs. However, as with other
conjectures concerning cycles and flows, the case of cubic
graphs appears to be crucial. Observe that every cycle cover of
a cubic graph must have length at least $4/3\cdot m$, because
every vertex is incident with at least one doubly covered edge.
This lower bound is reached by $3$-edge-colourable cubic
graphs, where any two bi-coloured $2$-factors provide a cycle
cover of length $4/3\cdot m$. For snarks, the conjecture
remains widely open. It is known, however, that every snark
with perfect matching index $4$ admits a cycle cover of length
$4/3\cdot m$ (see Steffen \cite[Theorem~3.1]{S2}). Potential
counterexamples to the $7/5$-conjecture may thus occur only
among snarks with perfect matching index at least $5$.

\textbf{Colouring defect of cubic graphs.} Both conjectures
discussed above are notoriously hard open problems. Therefore
it is meaningful to verify their validity for restricted but
nontrivial infinite families of snarks. One such family arises
by considering the colouring defect of snarks. For a bridgeless
cubic graph $G$, its \emph{colouring defect}, denoted by
$\df{G}$, is the smallest number of edges that are left
uncovered by any set of three perfect matchings. Clearly, the
colouring defect of every $3$-edge-colourable graph equals
zero. On the other hand, every snark has colouring defect at
least~$3$. Moreover, if the colouring defect equals~$3$, then
the three uncovered edges alternate with the doubly covered
ones on a chordless $6$-cycle~\cite{S2}.

In this paper we verify the Berge conjecture and the
$7/5$-conjecture for cubic graphs of colouring defect $3$, a
family that is, in a certain sense, closest to
$3$-edge-colourable graphs. Computational results reveal that
the overwhelming majority of snarks of small order (up to 36
vertices) have colouring defect 3 (see \cite[p.~28,
Table~1]{KMNS-defred}). It is conceivable that this ratio will
approach 100$\%$ with increasing order, which would indicate
that almost all snarks have colouring defect 3. Our
investigation thus covers a significant portion of snarks.
\begin{figure}[h!]
 \centering
 \includegraphics[scale=1.2]{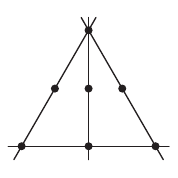}
\caption{Four lines of the Fano plane forming the configuration
$F_4$.}
\label{fig:F_4}
\end{figure}

Snarks of colouring defect $3$ can be alternatively described
by means of edge colourings where colours are the points of the
Fano plane and colour patterns around vertices form lines of
the configuration $F_4$ of four lines of the Fano plane
covering all seven points (see Figure~\ref{fig:F_4}). Every
colouring that minimises the use of the unique point of $F_4$
that belongs to three lines of $F_4$ forces the existence of a
$6$-cycle and thus restricts cyclic connectivity to $6$; see
\cite[Section~3]{KMNS-defred}. This phenomenon, where the
existence of a certain type of an edge colouring restricts
cyclic connectivity to $6$, clearly points towards the
Jaeger-Swart conjecture \cite{JSw} that there are no cyclically
$7$-edge-connected snarks.


\textbf{Main results.} Results of this paper pertain to several
areas discussed above. Proofs extensively use tools and
techniques developed in our recent papers \cite{KMNS-Berge}
and~\cite{KMNS-defred}, nevertheless, our present paper is
self-contained.

In Theorem~\ref{thm:decomp-def3} we prove that every snark $G$
with colouring defect $3$ admits a decomposition
$\mathcal{D}=\{G_1,\ldots, G_m\}$ such that all decomposition
factors are cyclically $4$-edge-connected except possibly
$G_1$, which has colouring defect $3$ while all other members
of $\mathcal{D}$ are $3$-edge-colourable. If $G_1$ is not
cyclically $4$-edge-connected, then it contains a triangle $T$
whose contraction produces a cyclically $4$-edge-connected
graph $G/T$, but the colouring defect of $G/T$ can be
arbitrarily large. Triangles whose contraction increases
defect, called \emph{essential}, indeed exist as shown in
\cite[Theorem~5.3]{KMNS-defred}.

Our main interest naturally focuses on those cubic graphs with
colouring defect $3$ whose perfect matching index is $5$ or
more. In \cite{KMNS-Berge} we have proved that every cubic
graph with colouring defect $3$ has has perfect matching index
at most $5$.  We have further shown that if the graph is
cyclically $4$-edge-connected, then its perfect matching index
equals $4$ unless it is the Petersen graph, whose perfect
matching index equals $5$.

In this paper, we aim to characterise all cubic graphs with
colouring defect~$3$ whose perfect matching index equals $5$,
irrespectively of their cyclic connectivity. In
Theorem~\ref{thm:main} we prove that a connected cubic graph
$G$ with colouring defect $3$ has perfect matching index $5$ if
and only if it arises from the Petersen graph with a specified
$6$-cycle $C$ by inserting edge-deleted connected
$3$-edge-colourable graphs into certain edges outside $C$ and
by substituting certain vertices outside $C$ with
vertex-deleted $3$-edge-colourable quasi-bipartite cubic graphs
(see Section~\ref{sec:pmi5} for the definition). An important
step towards the proof of Theorem~\ref{thm:main} is a rather
surprising fact that every cubic graph with colouring defect
$3$ containing an essential triangle has perfect matching index
$4$ (see Theorem~\ref{thm:triancore} and
Corollary~\ref{cor:esstrian}).

We apply Theorem~\ref{thm:main} and the methods developed for
its proof to the 7/5-conjecture. We prove that every snark with
colouring defect $3$ has a cycle cover of length at most
$4/3\cdot m+1$ where $m$ is the number of edges
(Theorem~\ref{thm:scc-def3}). The bound is best possible, being
achieved by the graphs whose perfect matching index equals $5$.
Our result improves that of Steffen \cite[Corollary~2.20]{S2}
which states that every triangle-free cubic graph with
colouring defect $3$ has a cycle cover of length at most
$4/3\cdot m+2$.

Some of our methods apply to cubic graphs with colouring defect
greater than $3$. Our final result,
Theorem~\ref{thm:pentagons}, states that every snark containing
a $5$-cycle with an edge whose endvertices removed yield a
$3$-edge-colourable graph admits a cycle cover of length at
most $4/3\cdot m+1$, where $m$ is the number of edges.


\textbf{Structure of the paper.} The next section lists basic
definitions used throughout the paper. The concept of the
colouring defect of a cubic graph is explored in Section~3.
Sections~4 and~5 prepare necessary tools for Section~6, which
provides a characterisation of graphs with colouring defect $3$
and perfect matching index at least $5$. Results of the
preceding sections are applied to the $7/5$-conjecture in
Section~7. The paper is closed with several remarks about the
presented results.

\section{Preliminaries}\label{sec:prelim}

\noindent{}All graphs in this paper are finite and for the most
part cubic (3-valent). Multiple edges and loops are permitted.
By a \emph{cycle} in a graph $G$ (more precisely, a
$\mathbb{Z}_2$-cycle) we mean a subgraph with all vertices of
even degree. The \textit{length} of a cycle is the number of
its edges. We use the term \emph{circuit} to mean a connected
$2$-regular graph. An $k$-\emph{cycle} is a circuit of
length~$k$. The length of a shortest circuit in a graph is its
\emph{girth}. If $H$ is an induced subgraph of $G$, we let
$G/H$ denote the graph arising from $G$ by contracting each
component of $H$ into a single vertex.

Subgraphs are often identified with their edge sets. For a
subgraph $Y$, or just for a set of vertices of a graph $G$, we
let $\delta_G(Y)$ denote the edge cut consisting of all edges
joining $Y$ to vertices not in $Y$. A connected graph $G$ is
said to be \emph{cyclically $k$-edge-connected} if the removal
of fewer than $k$ edges from $G$ cannot create a graph with at
least two components containing circuits. An edge cut $S$ in
$G$ that separates two circuits from each other is
\emph{cycle-separating}.

An \emph{edge colouring} of a graph $G$ is a mapping from the
edge set of $G$ to a set of colours. A colouring is
\emph{proper} if any two edge-ends incident with the same
vertex carry distinct colours. A \emph{$k$-edge-colouring} is a
proper colouring where the set of colours has $k$ elements. 
A $2$-connected cubic with no $3$-edge-colouring is called a \emph{snark}.

Our definition leaves the concept of a snark as wide as
possible since more restrictive definitions may lead to
overlooking certain important phenomena in cubic graphs. Our
definition thus follows Cameron et al. \cite{CCW}, Nedela and
\v Skoviera \cite{NS-decred}, Steffen \cite{S1}, and others,
rather than a more common approach where snarks are required to
be cyclically $4$-edge-connected and have girth at least $5$,
see for example~\cite{FMS-survey}. In this paper, such snarks
are called \emph{nontrivial}.

It is often convenient to regard the colours $1$,
$2$, and $3$ as elements of the group
$\mathbb{Z}_2\times\mathbb{Z}_2$, identifying them with their binary representation: $1=(0,1)$,
$2=(1,0)$, and $3=(1,1)$. The colours of the edges
around any vertex $v$ are pairwise distinct exactly when they sum to zero. The latter condition coincides with the Kirchhoff
law for nowhere-zero $\mathbb{Z}_2\times\mathbb{Z}_2$-flows, which implies that a proper $3$-edge-colouring of a cubic graph coincides the a nowhere-zero
$\mathbb{Z}_2\times\mathbb{Z}_2$-flow on it.

The following lemma is a variant of the classsical Parity Lemma.

\begin{lemma}{\rm (Parity Lemma)}\label{lem:par}
Let $G$ be a cubic graph and let $H$ be a subgraph of $G$. If 
$\xi$ is a proper $3$-edge-colouring of $H\cup\delta_G(H)$, then
$$\sum_{e\in\delta_G(H)}\xi(e)=0.$$
Equivalently, the number of edges in $\delta_G(H)$ carrying any
fixed colour has the same parity as the size of the cut.
\end{lemma}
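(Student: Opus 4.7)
The plan is to exploit the identification between proper $3$-edge-colourings of cubic graphs and nowhere-zero $\mathbb{Z}_2\times\mathbb{Z}_2$-flows that the paper has just set up. Since $G$ is cubic, every vertex $v\in V(H)$ meets exactly three edges of $G$, all of which lie in $H\cup\delta_G(H)$ by the definition of $\delta_G(H)$. Properness of $\xi$ at $v$ means that the three colours incident with $v$ are pairwise distinct, and the three non-zero elements of $\mathbb{Z}_2\times\mathbb{Z}_2$ sum to $(0,0)$. Hence the Kirchhoff-type identity
\[
\sum_{e\ni v}\xi(e)=0
\]
holds at every $v\in V(H)$.

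Next I would sum this identity over all vertices of $H$. Each edge of $H$ has both endpoints in $V(H)$, so its colour is counted twice, while each edge of $\delta_G(H)$ has exactly one endpoint in $V(H)$ and contributes its colour once. Since $2x=0$ for every $x\in\mathbb{Z}_2\times\mathbb{Z}_2$, the interior contributions cancel and one is left with $\sum_{e\in\delta_G(H)}\xi(e)=0$, which is the first assertion.

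For the equivalent parity formulation, let $n_i$ denote the number of edges of $\delta_G(H)$ coloured $i\in\{1,2,3\}$. Using the stated binary encoding $1=(0,1)$, $2=(1,0)$, $3=(1,1)$, the identity $n_1(0,1)+n_2(1,0)+n_3(1,1)=(0,0)$ reads componentwise as $n_2+n_3\equiv 0$ and $n_1+n_3\equiv 0\pmod 2$. Therefore $n_1,n_2,n_3$ all have the same parity, and since $|\delta_G(H)|=n_1+n_2+n_3$, this common parity matches that of $|\delta_G(H)|$.

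There is essentially no serious obstacle here; the argument is a standard double-counting in the abelian group $\mathbb{Z}_2\times\mathbb{Z}_2$. The only point that requires mild care is verifying that the hypothesis "$\xi$ is a proper $3$-edge-colouring of $H\cup\delta_G(H)$" really supplies the Kirchhoff condition at every vertex of $H$, which uses the cubicness of $G$ in a crucial way to guarantee that all three edges at such a vertex are actually coloured by $\xi$.
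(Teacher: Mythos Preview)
The paper does not actually prove this lemma; it is stated as ``a variant of the classical Parity Lemma'' and left without proof, so there is nothing in the paper to compare your argument against. Your proof is the standard one and is correct. The only very minor caveat is that the claim ``all three edges of $G$ at $v\in V(H)$ lie in $H\cup\delta_G(H)$'' tacitly assumes $H$ is taken as an induced subgraph (equivalently, identified with its vertex set), which is precisely how the lemma is used throughout the paper.
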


A set of vertices or an induced subgraph $H$ of a snark $G$ is
called \emph{non-removable} if $G - V(H)$ is colourable;
otherwise, $H$ is \emph{removable}. It well known (and it
readily follows from Parity Lemma) that digons, triangles, and
quadrilaterals in snarks are removable subgraphs. It is also
known that a pair $\{u,v\}$ of adjacent vertices of a snark $G$
forming an edge $e=uv$ is removable if and only if $e$ is
\emph{suppressible}, which means that the cubic graph $G\sim e$
homeomorphic to $G-e$ is not $3$-edge-colourable (see
\cite[Proposition~4.2]{NS-decred}).

\section{Colouring defect of a snark}\label{sec:arrays}

\noindent{}In this section we develop a theory that unfolds
from the concept of the colouring defect of a cubic graph in
the extent necessary for the subsequent sections.

A set $\mathcal{M}=\{M_1,M_2, M_3\}$ consisting of three
perfect matchings of a bridgeless cubic graph $G$ will be
called a \emph{$3$-array} of perfect matchings of $G$. We think
of $\mathcal{M}$ as a generalisation of a $3$-edge-colouring,
in which case the three perfect matchings are pairwise
disjoint. We say that an edge is \emph{uncovered}, \emph{simply
covered}, \emph{doubly covered}, or \emph{triply covered} if it
belongs, respectively, to zero, one, two, or three members
of~$\mathcal{M}$. The minimum number of edges left uncovered by
a $3$-array of perfect matchings is the \emph{colouring defect}
of $G$, denoted by $\df{G}$. For brevity, we often drop the
word ``colouring" and speak simply of the \emph{defect} of~$G$.
A $3$-array that leaves $\df{G}$ uncovered edges will be called
\emph{optimal}.

An important structure related to a $3$-array is its
\emph{core}, the subgraph induced by all the edges that
are not simply covered. The core of a $3$-array $\mathcal{M}$
will be denoted by $\core(\mathcal{M})$. A core will be called
\emph{optimal} whenever $\mathcal{M}$ is optimal.
Figure~\ref{fig:petersen_core} shows an optimal $3$-array
$\{M_1,M_2,M_3\}$ of perfect matchings of the Petersen graph.
The label of each edge $e$ lists the indices of the perfect
matchings containing  $e$. The core of this $3$-array is the
``outer'' hexagon of the Petersen graph.

\begin{figure}[h!]
 \centering
 \includegraphics[scale=1.3]{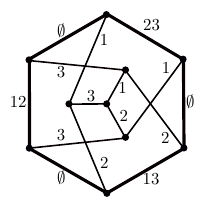}
 \caption{An optimal $3$-array of the Petersen graph;
 its core is displayed bold.}
\label{fig:petersen_core}
\end{figure}

Let $\mathcal{M}=\{M_1, M_2, M_3\}$ be a $3$-array for a cubic
graph $G$. To each edge $e$ of $G$ we  assign the list
$\phi(e)$ of elements of the set $\{1,2,3\}$  in lexicographic
order so that an element $j\in\{1,2,3\}$ is listed if and only
if $e\in M_j$. This assignment defines an edge colouring
$$\phi\colon E(G)\to\{\emptyset, 1, 2, 3, 12, 13, 23, 123\}.$$
If we take into account the fact that each index
$i\in\{1,2,3\}$ represents a perfect matching, we can easily
deduce that $\phi$ is a proper colouring if and only if $G$
contains no triply covered edge, that is, if the colour $123$
is never used.

Assume that $G$ has no triply covered edge with respect to the
$3$-array $\mathcal{M}$. Since each index $i\in\{1,2,3\}$
represents a perfect matching, the colouring $\phi$ permits
only four combinations of colours around any vertex, namely
$\{1,2,3\}$, $\{\emptyset, 1,23\}$, $\{\emptyset, 2,13\}$, and
$\{\emptyset, 3,12\}$. These four triples form a point-line
configuration isomorphic to the configuration $F_4$ of four
lines of the Fano plane covering all seven points. Due to this
fact, the colouring $\phi$ will henceforth be referred to as
the \emph{Fano colouring} associated with $\mathcal{M}$.
Figure~\ref{fig:Fanocol} on the left displays the configuration
$F_4$ with lines corresponding to admissible combinations of
colours around vertices. The right-hand side of the figure
shows the same configuration with the standard projective
coordinates in
$\mathbb{Z}_2\times\mathbb{Z}_2\times\mathbb{Z}_2-\{0\}$.

\begin{figure}[h!]
\subfigure[]{
\includegraphics[scale=1.3]{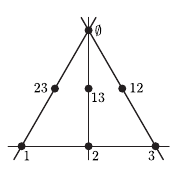}\label{fig:konf1}
}\qquad
\subfigure[]{
\includegraphics[scale=1.3]{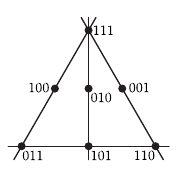}\label{fig:konf2} }
\caption{The configuration $F_4$ for $3$-arrays with no triply
covered edge.}
\label{fig:Fanocol}
\end{figure}

Recall that the coordinates of any three points constituting a
line sum to zero. It follows that we can regard a Fano
colouring (including the case of a triply covered edge) as a
flow with values in the group
$\mathbb{Z}_2\times\mathbb{Z}_2\times\mathbb{Z}_2$ and call it
a \emph{Fano flow} associated with~$\mathcal{M}$. This flow is
nowhere-zero if and only if $G$ has no triply covered edge.

The rest of this section is devoted to cubic graphs of defect
$3$, which, as we shall see, is the smallest nonzero value that
an cubic graph can have. For the missing proofs we
refer the reader to \cite[Section~4]{KMNS-defred}.

\begin{figure}[h]
\includegraphics[scale=1.5]{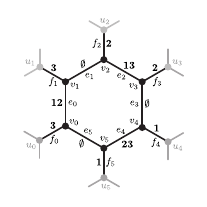}
\label{fig:oddness1}\caption{A hexagonal core in a snark and its immediate
neighbourhood.}
\label{fig:core3}
\end{figure}

The following theorem provides a detailed description of cores
in snarks of defect $3$.

\begin{theorem}\label{thm:hexcore}\cite[Theorem~3.3]{KMNS-defred}
Every snark $G$ has colouring defect at least $3$. Furthermore,
the following three statements are equivalent for any cubic
graph $G$.
\begin{enumerate}[{\rm(i)}]
\item $\df{G}=3$.
\item The core of any optimal $3$-array of $G$ is a
    $6$-cycle.
\item $G$ contains an induced $6$-cycle $C$ such that the
    subgraph $G-E(C)$ admits a proper $3$-edge-colouring
    under which the six edges of $\delta_G(C)$ receive
    colours $1,1,2,2,3,3$ or $1,2,2,3,3,1$ with respect to
    a fixed cyclic ordering induced by an orientation of
    $C$.
\end{enumerate}
\end{theorem}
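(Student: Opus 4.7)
The plan is to first show $\df{G}\ge 3$ for snarks and then prove the equivalences $(i)\Leftrightarrow(ii)\Leftrightarrow(iii)$, relying throughout on the local multiplicity analysis at each vertex. For any $3$-array $\mathcal{M}=\{M_1,M_2,M_3\}$, the matching-multiplicities of the three edges at a vertex sum to $3$, so the admissible patterns are $(1,1,1)$, $(2,1,0)$, and $(3,0,0)$. Consequently each vertex is incident to $0$, $1$, or $2$ uncovered edges and to $0$, $2$, or $3$ core edges. In particular $\df{G}=0$ is equivalent to every vertex having pattern $(1,1,1)$, which in turn is equivalent to the existence of a proper $3$-edge-colouring, so snarks satisfy $\df{G}>0$.

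To upgrade the bound to $\df{G}\ge 3$ I would first show, by a local augmenting-path rearrangement, that an optimal $3$-array may be assumed to have no triply covered edges, so all multiplicities lie in $\{0,1,2\}$. Then every core vertex has exactly two core edges, of multiplicities $0$ and $2$, so the core is a disjoint union of even circuits along which uncovered and doubly covered edges strictly alternate. A hypothetical core corresponding to $\df{G}=1$ or $\df{G}=2$ would therefore be a single digon, two disjoint digons, or a single $4$-cycle; in each case a Kempe-type swap on the relevant short circuit between two of the matchings reduces the defect to zero and produces a proper $3$-edge-colouring, contradicting the snark hypothesis.

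The argument for $(i)\Rightarrow(ii)$ continues the same reduction. Under $\df{G}=3$, the core decomposes as even cycles containing three uncovered edges in total, namely either a single hexagon, a $4$-cycle and a digon, or three disjoint digons. The digon-containing cases are eliminated by the local swap of the preceding paragraph, which would strictly decrease the defect and contradict the optimality of $\mathcal{M}$. For $(ii)\Rightarrow(iii)$, the Fano colouring restricted to $G-E(C)$ is a proper $3$-edge-colouring of $G-E(C)$, and the Parity Lemma (Lemma~\ref{lem:par}) applied to $\delta_G(C)$, together with the alternation of uncovered and doubly covered edges along $C$, pins the six colours of $\delta_G(C)$ to one of the two advertised cyclic sequences, the two corresponding to the two possible phases of the alternation. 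Finally for $(iii)\Rightarrow(i)$, I would extend the given $3$-edge-colouring of $G-E(C)$ across $C$: each of the three colour classes is a near-perfect matching of $G$ missing exactly two vertices of $C$, and it completes to a perfect matching of $G$ by adding the unique pair of non-adjacent $C$-edges that saturates those two vertices; the three completed matchings doubly cover three edges of $C$ and leave the remaining three uncovered, giving $\df{G}\le 3$.

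The technically hardest step is the reduction to optimal $3$-arrays without triply covered edges, together with the elimination of digon cores in $(i)\Rightarrow(ii)$. Both rely on local swaps of edges among the three matchings that must preserve the matching property while strictly decreasing the uncovered count; loops and parallel edges, which are admitted by the paper's general definition of a cubic graph, complicate the book-keeping and call for a careful case analysis around each vertex where the swap occurs.
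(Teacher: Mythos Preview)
The paper itself does not prove Theorem~\ref{thm:hexcore}; it is quoted verbatim from~\cite{KMNS-defred} with the proof explicitly deferred there, so there is no in-paper argument to compare against. Evaluating your sketch on its own merits: the overall architecture is right, but the implication $(\mathrm{ii})\Rightarrow(\mathrm{iii})$ has a genuine gap.

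The alternation of uncovered and doubly covered edges along the hexagonal core $C$ forces the colours on $\delta_G(C)$ into a block pattern $a,a,b,b,c,c$ (or its cyclic shift $a,b,b,c,c,a$, according to the phase), where $a,b,c\in\{1,2,3\}$ are the complements of the Fano colours of the three doubly covered edges. But the Parity Lemma yields only $2(a+b+c)=0$ in $\mathbb{Z}_2\times\mathbb{Z}_2$, which is vacuous; it does \emph{not} force $a,b,c$ to be pairwise distinct. Nothing you have written excludes, say, the pattern $3,3,3,1,1,3$, which is not one of the two sequences in~(iii). The missing ingredient is another swap, this time on the hexagon: if two doubly covered edges share a Fano colour, then some $M_i$ contains all three doubly covered edges of $C$, and replacing $M_i$ by $M_i\triangle E(C)$ produces a $3$-array with no uncovered edge at all, contradicting $\df{G}=3$. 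The same observation disposes of the ``induced'' clause in~(iii), which you do not address: a chord of $C$ would serve as the simply covered edge at two distinct core vertices and would therefore force two doubly covered edges to have equal Fano colour.

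A minor slip in $(\mathrm{iii})\Rightarrow(\mathrm{i})$: each colour class of the $3$-edge-colouring of $G-E(C)$ misses \emph{four} vertices of $C$, not two; the completion is by the unique pair of non-adjacent $C$-edges matching those four vertices, which is exactly what your phrase ``pair of non-adjacent $C$-edges'' already describes.
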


If $G$ is a snark with colouring defect $3$, then by
Theorem~\ref{thm:hexcore}(iii) it contains an optimal array
$\mathcal{M}=\{M_1,M_2,M_3\}$ whose core is an induced
$6$-cycle. Such a core will be called a \emph{hexagonal core}
of $G$. Figure~\ref{fig:core3} illustrates the general
situation around any hexagonal core in a snark of defect $3$.

Next, we discuss possible mutual positions of hexagonal
cores and short circuits in snarks of defect $3$. The fact that
a hexagonal core is an induced $6$-cycle implies that it cannot
intersect a digon. The forthcoming two lemmas deal with
triangles and quadrilaterals, respectively. The first of them
features the concept of an essential triangle, which is crucial
for understanding cubic graphs of defect $3$. A triangle $T$ in
a snark $G$ with $\df{G}=3$ is said to be \emph{essential} if
the graph $G/T$ obtained by contracting $T$ to a vertex has
$\df{G/T}\ge 4$.

\begin{lemma}\label{lem:essential}\cite[Lemma~4.1]{KMNS-defred}
Let $G$ be a snark with $\df{G}=3$, let $C$ be a hexagonal core
of $G$, and let $T$ be an arbitrary triangle in $G$. The
following statements hold.
\begin{itemize}
\item[{\rm (i)}] If $C\cap T=\emptyset$, then $T$ is not
    essential.

\item[{\rm (ii)}]  If $C\cap T\ne\emptyset$, then $C\cap T$
    consists of a single uncovered edge, and the edge cut
    $\delta_G(T)$ comprises three pairwise independent edges.

\item[{\rm (iii)}] Every hexagonal core intersects at most
    one triangle.

\item[{\rm (iv)}] $G$ has at most one essential triangle.
 \end{itemize}
\end{lemma}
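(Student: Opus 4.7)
My strategy is to work with an optimal $3$-array $\mathcal{M}=\{M_1,M_2,M_3\}$ of $G$ whose core is $C$, together with its associated Fano colouring $\phi\colon E(G)\to\{\emptyset,1,2,3,12,13,23\}$. Because $\df{G}=3$ no edge is triply covered, so $\phi$ is proper and the three incident colours at each vertex form a line of the configuration $F_4$; the six edges of $C$ carry $\emptyset$ and $ab\in\{12,13,23\}$ alternately, while every edge outside $C$ is simply covered. For part~\textup{(i)}, every edge of $T$ is simply covered, and since any two edges of $T$ share a vertex the total coverage of $E(T)$ equals $3$ with each $M_i$ contributing exactly one edge of $T$. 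Restricting $\mathcal{M}$ to $G/T$ yields a $3$-array whose uncovered edges are still precisely those of $C$; hence $\df{G/T}\leq 3$ and $T$ is not essential.

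For part~\textup{(ii)}, $C$ being induced forces $C\cap T$ to be a single edge $e=uv$ with $T=uvw$, $w\notin V(C)$ (two common edges would produce a chord, a single common vertex would give degree $4$). If $e$ were doubly covered, the $F_4$-lines at $u$ and $v$ would both assign the singleton $(1,1,1)-\phi(e)$ to $uw$ and $vw$, violating properness at $w$; so $e$ is uncovered. Reading off the lines at $u,v,w$ identifies $\delta_G(T)$ as two doubly-covered $C$-edges together with one simply-covered edge; a shared endpoint $z$ of two of these would force $z$'s $F_4$-line (determined by its $C$-neighbours) to repeat the singleton coming from $w$, so $\delta_G(T)$ is a matching of three independent edges.

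Part~\textup{(iii)} is the crux. Suppose two triangles $T_1=x_1x_2w_1$ and $T_2=x_3x_4w_2$ both meet $C$; by \textup{(ii)} they do so at distinct uncovered edges of $C$, and $w_1\neq w_2$ by a degree count. The plan is to contradict the snark hypothesis by producing a proper $3$-edge-colouring of $G$. First, the parity identity $\sum_{e\in\delta_G(S)}\phi(e)=|S|\cdot(1,1,1)$ in $\mathbb{Z}_2^3$, applied to enlargements of $V(T_1)\cup V(T_2)$, rules out every degenerate identification among the external neighbours $w_1',w_2',w_{x_5},w_{x_6}$; for instance, $w_1'=w_2$ collapses the cut to an empty set whose prescribed sum is forced to be non-zero. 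In the remaining generic configuration, the additional colour-flexibility supplied by the two triangles lets one replace the Fano-derived $3$-edge-colouring of $G-E(C)$ by a new proper $3$-colouring whose pendant pattern on $\delta_G(C)$ admits a proper extension to the hexagon $C$; gluing the two pieces yields a $3$-edge-colouring of $G$, contradicting the snark hypothesis.

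Part~\textup{(iv)} then follows from \textup{(i)} and \textup{(iii)}: every essential triangle meets every hexagonal core (contrapositive of \textup{(i)}), so two essential triangles would both meet $C$ and by \textup{(iii)} must coincide. The main technical obstacle is the construction in \textup{(iii)} of the modified colouring on $G-E(C)$: the Fano-derived colouring produces a canonical pendant pattern $1,1,2,2,3,3$ or $1,2,2,3,3,1$ that does not extend to $C$ when two triangles are present, and the key step is to use the two-triangle flexibility to exhibit an alternative $3$-edge-colouring of $G-E(C)$ whose pendant pattern does extend.
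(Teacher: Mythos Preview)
The paper does not prove this lemma; it is quoted verbatim from \cite{KMNS-defred} without argument, so there is no in-paper proof to compare against. I can only assess your proposal on its own merits.

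Your treatment of \textup{(i)}, \textup{(ii)}, and \textup{(iv)} is essentially correct. In \textup{(i)} the point that each $M_i$ meets $T$ in exactly one edge, so that $M_i\setminus E(T)$ is a perfect matching of $G/T$ with the same uncovered set, is exactly right. In \textup{(ii)} your $F_4$-line analysis is sound, and the independence of $\delta_G(T)$ follows because the colour of $ww'$ differs from the singleton forced at either $C$-vertex adjacent to a doubly covered cut edge. Part \textup{(iv)} is a clean consequence of \textup{(i)} and \textup{(iii)}.

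Part \textup{(iii)}, however, has a genuine gap. You correctly locate the crux --- modify the Fano-derived $3$-colouring of $G-E(C)$ so that the pendant pattern on $\delta_G(C)$ extends to the hexagon --- but you never carry it out; you merely assert that ``the additional colour-flexibility supplied by the two triangles'' suffices. In fact the only \emph{local} flexibility a triangle at an uncovered edge provides is the swap of its two pendant colours (the $1$--$2$ Kempe chain through $w_i$ is just the length-$2$ path $v_{j}w_iv_{j+1}$), and this is not enough. With the canonical pattern $(1,1,2,2,3,3)$ and triangles at the uncovered edges $e_1$ and $e_3$, the four patterns reachable by such swaps are
\[
(1,1,2,2,3,3),\quad (1,2,1,2,3,3),\quad (1,1,2,3,2,3),\quad (1,2,1,3,2,3),
\]
and a direct check shows that none of them admits a proper extension to the $6$-cycle: in each case two consecutive cycle-edges are forced to receive the same colour. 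So either a genuinely global recolouring of $G-E(C)$ is required, or the contradiction in \textup{(iii)} must be obtained by a different route altogether. Either way, the step you label ``the main technical obstacle'' is the whole proof of \textup{(iii)}, and it is missing.

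A minor point: in your parity argument the identification $w_1'=w_2$ does not collapse $\delta_G(S)$ to the empty set; with $S=V(T_1)\cup V(T_2)$ one has $|S|=6$, hence the prescribed sum is $0$, and the cut becomes $\{e_0,e_4\}$, whose characteristic-vector sum is indeed nonzero --- so the conclusion is right but the description is not.
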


\begin{lemma}\label{lem:core_quadrangle}\cite[Lemma
4.5]{KMNS-defred}
Let $G$ be a snark with $\df{G}=3$. If a hexagonal core of $G$
intersects a quadrilateral, then the intersection consists of a
single uncovered edge. Moreover, two edges leaving the
quadrilateral are doubly covered and the other two are simply
covered.
\end{lemma}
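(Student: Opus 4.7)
The plan is a case analysis on the integer $k = |V(Q) \cap V(C)|$, together with an extension argument that rules out the undesired configurations by producing a proper $3$-edge-colouring of $G$ and thereby contradicting the snark hypothesis. Each vertex $v$ of $V(Q) \cap V(C)$ has two of its three incident edges in $Q$ and two in $C$, so at least one of them lies in $E(Q) \cap E(C)$ and its other endpoint is again in $V(Q) \cap V(C)$; this forces $k \geq 2$. If $k = 4$, the induced property of $C$ would give $E(Q) \subseteq E(C)$, but a simple $6$-cycle cannot contain a $4$-cycle as a subgraph, a contradiction. If $k = 3$, the middle vertex of the shared triple has both of its $Q$-edges in $C$ (otherwise $C$ would be non-induced), so $E(Q) \cap E(C)$ is a path of two consecutive edges of $C$, one uncovered and one doubly covered by the alternation along $C$. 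If $k = 2$, the unique shared edge $uv$ is adjacent in $Q$, so $Q = uvxy$ with $x, y \notin V(C)$, and $uv$ is either uncovered or doubly covered.

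In the two undesired sub-cases---namely $k = 3$ and $k = 2$ with $uv$ doubly covered---I would consider the induced subgraph $H$ spanned by $V(C)$ together with the one or two vertices of $Q$ lying outside $C$. By Theorem~\ref{thm:hexcore}, every edge of $G$ outside $C$ is simply covered, so the restriction of $\mathcal{M}$ to $G - V(H)$ is already a proper $3$-edge-colouring, and the boundary edges in $\delta_G(V(H))$ carry prescribed colours. The aim is to show that these boundary colours extend to a proper $3$-edge-colouring of $H$; the two colourings then combine into a proper $3$-edge-colouring of $G$, giving the required contradiction with the snark hypothesis. The extension is verified by a finite case analysis that uses the pairing identities $c_1 = c_6$, $c_2 = c_3$, $c_4 = c_5$ among the six leaving-$C$ colours (or their analogues in the $k = 2$ sub-case), together with the proper-colouring constraint at the outside vertex $z$ (or at $x, y$), which forces three distinct colours on its three incident edges. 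After relabelling, only a handful of subcases survive and each admits an explicit extension. The remaining configuration, $k = 2$ with $uv$ uncovered, yields the conclusion of the lemma at once: the two other $C$-edges at $u$ and $v$ enter $\delta_G(Q)$ as doubly covered edges, while the third edges at $x$ and $y$ are simply covered.

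The main obstacle is the extension step: although finite, it requires verifying for every boundary colouring compatible with the Parity Lemma and the pairing identities that a proper $3$-edge-colouring of $H$ actually exists. A tempting alternative, namely to exhibit a single defect-reducing Kempe swap around $Q$, does not work directly: swapping the matching $M_1$ along the alternating cycle $Q$ merely shifts the hexagonal core to a new position that still intersects $Q$ in the same bad pattern and leaves the defect unchanged, so the global extension argument seems necessary to convert the obstruction into a genuine contradiction.
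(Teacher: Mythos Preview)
The paper does not prove this lemma; it is imported from \cite[Lemma~4.5]{KMNS-defred} and used as a black box here, so there is no in-paper argument to compare against.

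Your plan is sound and leads to a correct proof once the extension step is actually carried out. The reduction to $k\in\{2,3\}$ via the degree count and the induced property of $C$ is correct, and in the two bad configurations ($k=3$, and $k=2$ with the shared edge doubly covered) the boundary colours on $\delta_G(H)$ do propagate through the hexagon and force a proper $3$-edge-colouring of $H$, hence of $G$, contradicting the snark hypothesis; one can verify this directly, as the constraints at the two hexagon vertices opposite the quadrilateral pin down a single edge colour and the rest is then determined. Two points need tightening. First, your pairing identities $c_1=c_6$, $c_2=c_3$, $c_4=c_5$ correspond to one of the two colour patterns in Theorem~\ref{thm:hexcore}(iii); the patterns differ only by a cyclic shift and a colour permutation, so this is harmless, but it should be stated explicitly. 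Second, you must handle the degenerate situations where the third neighbour of $z$ (in the $k=3$ case) or of $x$ or $y$ (in the $k=2$ case) already lies on $C$; this shrinks $\delta_G(H)$ and changes the bookkeeping, though it causes no genuine difficulty. The main issue is simply that the extension step \emph{is} the proof, and your proposal stops precisely where that verification has to begin.
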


In the following lemma we describe intersections of hexagonal
cores with small cycle-separating edge cuts. Statement~(i)
constitutes part of Lemma~4.2 in \cite{KMNS-defred}.
Statement~(ii) is extracted from the proof Lemma~4.3 of
\cite{KMNS-defred}; its proof is identical with that of Claim~1
in the mentioned lemma.

\begin{lemma}\label{lem:2+3-cuts}
The following two statements hold in every snark $G$ with
$\df{G}=3$.
\begin{itemize}
\item[{\rm (i)}] No hexagonal core of $G$ intersects a
    $2$-edge-cut.
\item[{\rm (ii)}] If a hexagonal core $C$ intersects a
    cycle-separating $3$-edge-cut $R$, then $C\cap R$
    consists of two doubly covered edges. Moreover, the
    graph $G-R$ has unique component $Q$ such that $G/Q$ is
    a snark. The intersection $C\cap Q$ consists of a
    single uncovered edge.
\end{itemize}
\end{lemma}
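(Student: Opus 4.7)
The plan is to use the Fano flow $\phi\colon E(G)\to(\mathbb{Z}_2\times\mathbb{Z}_2\times\mathbb{Z}_2)\setminus\{0\}$ associated with an optimal $3$-array $\mathcal{M}=\{M_1,M_2,M_3\}$ whose core is $C$. Because $\df{G}=3$ and $C$ is a hexagon, the counting identity $u=d+2t$ between the numbers of uncovered, doubly covered, and triply covered edges forces $u=d=3$ and $t=0$, so $\phi$ is nowhere zero and the Parity Lemma becomes a flow-conservation statement: the sum of Fano colours across any edge cut vanishes. On $C$ the colours alternate between the uncovered value $\emptyset$ and three pairwise distinct doubly covered values in $\{12,13,23\}$, while every edge off $C$ is simply covered and carries a colour in $\{1,2,3\}$.

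For (i), applying flow conservation to a $2$-edge-cut $R=\{e,e'\}$ gives $\phi(e)=\phi(e')$. If exactly one of $e,e'$ lay on $C$, the two colours would lie in disjoint sets $\{\emptyset,12,13,23\}$ and $\{1,2,3\}$; if both were doubly covered core edges, the equality of Fano colours would contradict the pairwise distinctness of the three doubly covered core values. Hence $e$ and $e'$ are both uncovered core edges, and any two such edges automatically sit at distance two on $C$. Let $e^{\ast}$ be the doubly covered core edge on the short arc between them and $X$ the component of $G-R$ containing $e^{\ast}$; the two endpoints of $e^{\ast}$ have coinciding pendant colours by the pattern $1,1,2,2,3,3$ or $1,2,2,3,3,1$ of Theorem~\ref{thm:hexcore}(iii). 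Inserting a new edge $f_X$ between these two endpoints makes $G[X]\cup\{f_X\}$ cubic, and the pendant-colour coincidence lets the $3$-edge-colouring $\xi$ of $G-E(C)$ restricted to $X$ extend to a proper $3$-edge-colouring of this cubic graph. An analogous extension works on the complementary side, and the standard $2$-edge-cut reduction assembles the two colourings into a proper $3$-edge-colouring of $G$, contradicting the snark hypothesis.

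For (ii), flow conservation forces the three Fano colours on $R$ to sum to zero and hence to form a line of the Fano plane. Because $C\cap R\neq\emptyset$, the line $\{1,2,3\}$ (which would place every edge of $R$ off $C$) is excluded, leaving two line types: the mixed line $\{\emptyset,i,\overline{\imath}\}$ producing one uncovered and one doubly covered edge in $C\cap R$, and the line $\{i,ij,ik\}$ with $\{i,j,k\}=\{1,2,3\}$ producing two doubly covered edges in $C\cap R$. The main obstacle is to rule out the mixed line. Let $u,d$ denote the uncovered and doubly covered $R$-edges on $C$; they sit either antipodally or adjacently on $C$. In the antipodal case, $\xi$ propagates through each of the two length-two arcs of $C\setminus R$ into a proper $3$-edge-colouring of the contracted quotients $G/X$ and $G/Y$ (the Fano-line relation uniquely determines the colour of the simply covered $R$-edge in both extensions), and gluing along the $3$-edge-cut yields a proper $3$-edge-colouring of $G$---a contradiction. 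In the adjacent case the common endpoint $v$ of $u,d$ becomes a leaf in $G-R$; suppressing $v$ exposes a cycle-separating $2$-edge-cut $\{p_v,f\}$ of $G$, and a Kempe-chain modification of $\mathcal{M}$ produces a new optimal $3$-array whose hexagonal core $C'$ intersects $\{p_v,f\}$, contradicting (i). Hence only the line $\{i,ij,ik\}$ occurs and $C\cap R$ consists of two doubly covered edges.

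Finally, the alternating pattern on $C$ forces these two doubly covered cut edges at distance two, with exactly one uncovered core edge $e^{\ast}$ on the short arc between them. Let $Q$ be the component of $G-R$ containing $e^{\ast}$ and $P$ the complementary component, which contains the three-edge arc of $C$. Propagating $\xi$ on $Q$ together with the Fano-line relation at the contracted vertex yields a proper $3$-edge-colouring of $G/P$, so $G/P$ is not a snark. Because $G$ itself is a snark, the standard $3$-edge-cut reduction forces $G/Q$ to be a snark. Uniqueness of $Q$ among the components of $G-R$ is automatic: $G-R$ has exactly two components, since any hypothetical third component would have to attach to the rest through at least two $R$-edges, but $R$ contains only two $C$-edges whose endpoints already lie in the two arcs of $C\setminus R$. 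By construction $C\cap Q=\{e^{\ast}\}$ is the single uncovered edge, completing the proof.
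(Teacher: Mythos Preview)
Your Fano-flow framework and the line analysis for part~(ii) are sound, and the antipodal sub-case of the mixed line as well as the final ``two doubly covered edges'' case are handled correctly. There is, however, a genuine problem with the adjacent sub-case.

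The claimed ``Kempe-chain modification of $\mathcal{M}$'' producing a new hexagonal core $C'$ that meets the $2$-edge-cut $\{f_1,s\}$ is not justified, and in fact the standard Kempe move --- swapping $M_j$ and $M_k$ along a component of $M_j\triangle M_k$ --- only permutes the doubly-covered labels $12,13,23$ among themselves and leaves the set of uncovered edges fixed; hence it never changes the core. So this step does not go through.

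More importantly, the adjacent sub-case cannot be rescued, because the lemma as literally stated is false there. Take the Petersen graph with hexagonal core $C$, pick the pendant edge $f_1$ at a core vertex $v_1$, and form $G=Pg\oplus_2 K$ along $f_1$ for any $3$-edge-colourable cubic $K$ on at least four vertices; let $\{g_1,g_2\}$ be the principal $2$-cut with $g_1$ incident to $v_1$. Then $R=\{e_0,e_1,g_2\}$ is a cycle-separating $3$-edge-cut of $G$ intersecting $C$ in the adjacent pair $\{e_0,e_1\}$, one doubly covered and one uncovered, contradicting the asserted conclusion. The point is that the two adjacent core edges in $R$ force $R$ to be non-independent; the lemma (and the result of \cite{KMNS-defred} it is extracted from) is really about \emph{independent} cycle-separating $3$-edge-cuts, which is exactly how it is used later in the paper. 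Under that extra hypothesis the adjacent sub-case is vacuous: an uncovered and a doubly covered edge of $C$ that are both in an independent cut must be non-adjacent, hence antipodal, and your antipodal argument then completes the proof.

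So your proof is correct once you add the word ``independent'' to the hypothesis of~(ii) and simply delete the adjacent sub-case; the paper's own treatment (via \cite{KMNS-defred}) operates under the same implicit restriction. A minor remark on part~(i): the ``analogous extension on the complementary side'' is not quite analogous, since the long arc carries three core edges with pendants of two different colours rather than one core edge with equal pendants; the extension still exists, but it is worth writing out the colour propagation along the four-edge path explicitly.
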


Essential triangles in snarks of colouring defect~3 are closely
related to several other concepts which we are about to
introduce. A pentagon $D$ in a cubic graph $G$ is called
\emph{heavy} if $D$ contains an edge $e=uv$ such that
$G-\{u,v\}$ is $3$-edge-colourable, or equivalently, if the
edge $e$ is non-suppressible. Our
terminology is derived from the behaviour of the function
$\psi_G\colon E(G)\to\mathbb{N}$ introduced by K\'aszonyi in
\cite{Ka2-nonplan, Ka3-struct} and by Bradley in
\cite[Definition~3.4]{Brad3} and defined by setting
$18\cdot\psi_G(x)$ to be the number of $3$-edge-colourings of
$G\sim x$. The K\'aszonyi function is constant on each pentagon
and is positive if and only if the pentagon is heavy
\cite{Brad1}. In particular, each edge of a heavy pentagon 
is non-suppressible.

The constant value of $\psi_G$ extends from a
$5$-cycle to the entire \emph{cluster of $5$-cycles}, which is
an inclusion-wise maximal connected subgraph of $G$ formed by
the union of $5$-cycles. For more information about K\'aszonyi
function we refer the reader to Bradley
\cite{Brad1}-\cite{Brad3} or K\'aszonyi
\cite{Ka1-ortho}-\cite{Ka3-struct}.

\medskip

The final result of this section implies that an essential
triangle can only arise only by inflating a vertex of a heavy
pentagon in a snark with defect greater than $3$. The result
strengthens Theorem 5.2 from \cite{KMNS-defred}.

\begin{theorem}\label{thm:inflation}
Let $G$ be a snark and let $G^v$ be the graph obtained from $G$
by inflating a vertex $v$ to a triangle $T$. Then $G^v$ has a
hexagonal core intersecting $T$ if and only if $v$ lies on a
heavy pentagon of $G$.
\end{theorem}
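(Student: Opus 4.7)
The plan is to prove both directions via Theorem~\ref{thm:hexcore}(iii), which recasts the existence of a hexagonal core as the existence of an induced $6$-cycle $C$ together with a proper $3$-edge-colouring of $G^v - E(C)$ whose restriction to $\delta_{G^v}(C)$ follows the pattern $(1,1,2,2,3,3)$ or $(1,2,2,3,3,1)$ in cyclic order. Denote the three vertices of $T$ by $v_1, v_2, v_3$, with $v_i$ adjacent to the neighbour $n_i$ of $v$ in $G$.

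For the backward direction, suppose $C$ is a hexagonal core of $G^v$ meeting $T$. By Lemma~\ref{lem:essential}(ii), $C \cap T$ is a single uncovered edge, say $v_1 v_2$. The other edge of $C$ at $v_1$ cannot lie in $T$ (else $|C \cap T| \ge 2$), so it is $v_1 n_1$, and symmetrically the other edge of $C$ at $v_2$ is $v_2 n_2$. Filling in the four remaining vertices of $C$ gives a length-three path $n_1 a b n_2$ in $G^v \setminus T$, which lifts to the pentagon $D = v n_1 a b n_2 v$ of $G$ through $v$. To check that $D$ is heavy with heavy edge $v n_1$, take the proper $3$-edge-colouring $\eta$ of $G^v - E(C)$ supplied by Theorem~\ref{thm:hexcore}(iii). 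The pattern $(1,2,2,3,3,1)$ along $(v_1, n_1, a, b, n_2, v_2)$ is excluded, since it would assign the same colour to the two $\delta_{G^v}(C)$-edges $v_1 v_3$ and $v_2 v_3$ at the common vertex $v_3 \notin C$; hence only $(1,1,2,2,3,3)$ can occur. From $\eta$ one derives a proper $3$-edge-colouring of $G - \{n_1, v\}$ by restriction, completing with the unique colours forced on $a b$ and $b n_2$ by properness at $b$ and $n_2$ together with the pattern of $\eta$ on the adjacent $\delta_{G^v}(C)$-edges.

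For the forward direction, assume $v$ lies on a heavy pentagon $D = v n_1 a b n_2 v$ with heavy edge $v n_1$, and let $\xi$ be a $3$-edge-colouring of $G - \{n_1, v\}$. Define $C = v_1 n_1 a b n_2 v_2$ in $G^v$ and prescribe a $3$-array $\mathcal{M}$ on $C \cup T \cup \delta_{G^v}(C)$ by declaring $v_1 n_1, a b, n_2 v_2$ doubly covered with the three distinct $2$-subsets of $\{1,2,3\}$, the alternating edges $n_1 a, b n_2, v_1 v_2$ uncovered, and $\delta_{G^v}(C)$ coloured as $(1,1,2,2,3,3)$; this in turn forces the colour at $v_3 n_3$ for properness at $v_3$. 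It remains to extend $\mathcal{M}$ to the edges of $G - V(D)$ using $\xi$ so that the forced "missing colours" at the external neighbours $n_1', a', b', n_2', n_3$ of $V(D)$ match the prescribed pattern, namely $\xi(a a') = \xi(b b') = $ missing colour at $n_3$, while the missing colour at $n_1'$ is the third value.

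The main obstacle is the compatibility of $\xi$ with these three boundary conditions. Heaviness of $D$, together with the observation that every edge of a heavy pentagon is non-suppressible, supplies several candidate $3$-edge-colourings (one per heavy edge), and Kempe-chain swaps on any two colours in $G - \{n_1, v\}$ provide further flexibility. A case analysis based on whether $a a'$ and $b b'$ lie in the same connected component of the relevant two-colour subgraph of $G - \{n_1, v\}$ shows that a suitable $\xi$ can always be found, completing the construction of the hexagonal core in $G^v$.
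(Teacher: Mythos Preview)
Your argument for the direction ``hexagonal core intersecting $T$ $\Rightarrow$ heavy pentagon'' is essentially correct and parallels the paper's proof: contract $T$, observe that the induced $5$-cycle $C/T$ sits in $G$, and use the Fano colouring outside $C$ to colour $G-\{n_1,v\}$. Your exclusion of the pattern $(1,2,2,3,3,1)$ via properness at $v_3$ is exactly the right observation.

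The reverse direction, however, has a genuine gap. The heart of the matter is producing a proper $3$-edge-colouring of $G-E(D)$ whose restriction to $\delta_G(D)$ has the specific pattern $(c_2,c_1,c_2,c_2,c_3)$ on $(g_0,\ldots,g_4)$; this is what makes $C$ a hexagonal core of $G^v$ via Theorem~\ref{thm:hexcore}(iii). You attempt to manufacture such a colouring from a $3$-edge-colouring $\xi$ of $G-\{n_1,v\}$ together with Kempe swaps and the freedom to change the heavy edge, but you do not actually carry out the case analysis. In particular, when $aa'$ and $bb'$ lie in the \emph{same} two-colour component, the obvious swap changes both colours simultaneously and you still have $\xi(aa')\ne\xi(bb')$; you give no indication of how to proceed, nor do you address the further constraint that the missing colour at $n_3$ must agree with $\xi(aa')$. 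The paper sidesteps all of this by invoking Lemmas~6.1, 6.2, and~6.3(iii) of \cite{NS-decred}, which assert directly that for any two edges $g_i,g_j\in\delta_G(D)$ not adjacent to a common pentagon edge there is a $3$-edge-colouring of $G-E(D)$ with $\{g_i,g_j\}$ receiving the two minority colours. This is a nontrivial structural fact about heavy pentagons in snarks, and your Kempe-chain sketch does not constitute a proof of it.

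A secondary omission: you never verify that $D$ is an induced $5$-cycle (the paper does, using that a chord would allow extending a colouring of $G-E(D)$ to all of $G$). Without this, the $6$-cycle $C=v_1n_1abn_2v_2$ need not be induced in $G^v$, and Theorem~\ref{thm:hexcore}(iii) does not apply.
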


\begin{figure}[h!]
 \centering
\includegraphics[scale=0.9]{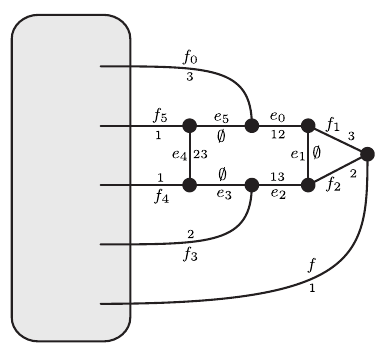}
 \caption{A triangle intersected by a hexagonal core.}
\label{fig:essential3}
\end{figure}

\begin{proof}
$(\Rightarrow)$ Assume that $C$ is a hexagonal core in $G^v$
that intersects $T$.  We show that $v$ lies on heavy $5$-cycle
of $G$. Lemma~\ref{lem:essential}(ii) implies that $C\cap T$
consists of a single uncovered edge, which we may assume to be
the edge $e_1$ indicated in Figure~\ref{fig:core3}. It follows
that the Fano colouring $\phi$ around $C$ is as illustrated in
Figure~\ref{fig:essential3}. Let us contract $T$ back to the
vertex $v$ and keep the colours of the edges of $G$. Clearly,
$C/T=(e_0e_2e_3e_4e_5)$ is a $5$-cycle containing $v$. Now,
consider the graph $G-\{v_0,v\}$. Set $\phi'(e_3)=3$,
$\phi'(e_4)=2$ and $\phi'(e)=\phi(e)$ for all remaining edges
of $G-\{v_0,v\}$. It is easy to see that $\phi'$ is a
$3$-edge-colouring, which readily implies that $C/T$ is a heavy
pentagon.

$(\Leftarrow)$ Assume that $v$ lies on a heavy $5$-cycle $D$ of
$G$. We first show that $D$ is an induced $5$-cycle. Let $G_1$
and $G_2$ be the subgraphs of $G$ induced by $V(D)$ and by
$V(G)-V(D)$, respectively. If $D$ is not induced, then
$|\delta_G(G_1)|=1$ or $|\delta_G(G_1)|=3$. The former
possibility is excluded, because $G$ is $2$-connected. Suppose
that $|\delta_G(G_1)|=3$. In this case, $G_1$ is a $5$-cycle
with one chord. Take any edge $e$ of $D$. Since $D$ is heavy,
$G\sim e$ is $3$-edge-colourable. Consequently, $G_2$ is
$3$-edge-colourable, too, because $G_2\subseteq G\sim e$.
However, one can easily extend any $3$-edge-colouring of $G_2$
to a $3$-edge-colouring of $G$, which is a contradiction. Thus
$D=(d_0d_1d_2d_3d_4)$ is an induced $5$-cycle. We may clearly
assume that $v$ is incident with the edges $d_4$ and~$d_0$. Let
$g_i$ be the edge of $\delta_G(D)$ adjacent to $d_{i-1}$ and
$d_i$, taking the indices modulo~$5$. The edge of $\delta_G(D)$
incident with $v$ is therefore~$g_0$. From Parity Lemma we
deduce that every $3$-edge-colouring $\sigma$ of $G-E(D)$
colours three of the edges in $\delta_G(D)$ with the same
colour, say~$1$, and the remaining two with colours $2$ and
$3$, respectively. Moreover, if $\sigma(g_i)=2$ and
$\sigma(g_j)=3$, then $g_i$ and $g_j$ are not adjacent to the
same edge of~$D$, otherwise $\sigma$ would extend to a
$3$-edge-colouring of $G$. In fact, it can be deduced from
Lemmas~6.1, 6.2, and~6.3 (iii) of \cite{NS-decred} that for any
two edges $g_i$ and $g_j$ of $\delta_G(D)$ that are not
adjacent to the same edge of $D$ there exists a
$3$-edge-colouring $\tau$ of $G-E(D)$ such that $\tau(g_i)=2$
and $\tau(g_j)=3$. Set $\tau(g_1)=2$ and $\tau(g_4)=3$, so that
all the remaining edges of $\delta_G(D)$ receive colour $1$,
see Figure~\ref{fig:5to6} (left).

\begin{figure}[h!]
 \centering
\includegraphics[width=0.6\textwidth]{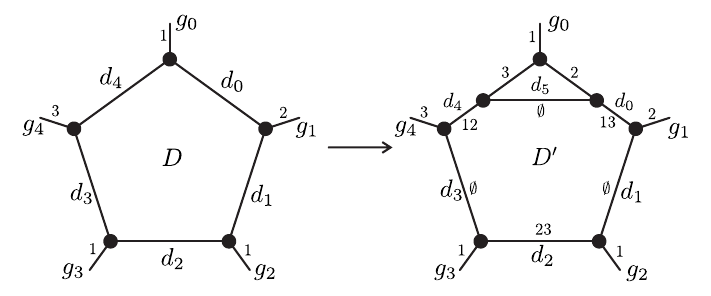}
\caption{Creating a hexagonal core by inflating a vertex on a
heavy pentagon.} \label{fig:5to6}
\end{figure}

Let us inflate $v$ into a triangle $T$, thereby producing the
graph $G^{v}$. Note that $G^{v}$ is a snark. For each edge of
$G$ incident with $v$ there is a unique edge of $G^{v}$
corresponding to it that leaves the triangle~$T$; we let the
latter edge inherit its name from the former. Having made this
agreement, let $d_5$ denote the edge of $T$ adjacent to both
$d_0$ and $d_4$. Clearly, $D'=(d_0d_1d_2d_3d_4d_5)$ is an
induced $6$-cycle of $G$. Now we can extend the colouring
$\tau$ of $G-E(D)$ to a Fano colouring of $G^v$. It is
sufficient to set $\tau(d_5)=\emptyset$ and derive the colours
of all other edges of $D'\cup T$ appropriately, see
Figure~\ref{fig:5to6} (right). It is easy to check that $\tau$
induces a $3$-array $\mathcal{N}$ of $G$ with
$\core(\mathcal{N})=D'$. Therefore $G$ has defect $3$ and
contains a hexagonal core that intersects a triangle. The proof
is complete.
\end{proof}

The previous theorem and Lemma~\ref{lem:essential} imply the
following.

\begin{corollary}\label{thm:essential}
Let $G$ be a snark with $\df{G}\ge 4$, let $v$ be a vertex of
$G$, and let $G^v$ be the snark created from $G$ by inflating
$v$ to a triangle. Then $\df{G^v}=3$ if and only if $v$ lies in
a heavy pentagon.
\end{corollary}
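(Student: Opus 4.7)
The plan is to derive the corollary as an immediate consequence of Theorem~\ref{thm:inflation} and Lemma~\ref{lem:essential}(i), with Theorem~\ref{thm:hexcore} supplying the bridge between having defect $3$ and having a hexagonal core. The hypothesis $\df{G}\geq 4$ is used only in the forward direction, to force any hexagonal core of $G^v$ to meet the inflated triangle $T$.

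For the direction $(\Leftarrow)$, I would assume that $v$ lies on a heavy pentagon of $G$ and apply the $(\Leftarrow)$ part of Theorem~\ref{thm:inflation} to produce a hexagonal core in $G^v$ meeting $T$. The implication (ii)$\Rightarrow$(i) of Theorem~\ref{thm:hexcore} then gives $\df{G^v}=3$, which is all that is required.

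For the direction $(\Rightarrow)$, I would assume $\df{G^v}=3$ and invoke (i)$\Rightarrow$(ii) of Theorem~\ref{thm:hexcore} to obtain a hexagonal core $C$ in $G^v$. The task is then to show that $C$ meets $T$, so that the $(\Rightarrow)$ part of Theorem~\ref{thm:inflation} can be applied to conclude. Suppose instead that $C\cap T=\emptyset$. Then Lemma~\ref{lem:essential}(i) tells us that $T$ is non-essential in $G^v$; unfolding the definition (which is applicable since we have already established $\df{G^v}=3$), this reads $\df{G^v/T}\leq 3$. However, contracting the inflated triangle recovers the original graph, so $G^v/T=G$, and we obtain $\df{G}\leq 3$, contradicting the hypothesis $\df{G}\geq 4$. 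Hence $C\cap T\neq\emptyset$, and Theorem~\ref{thm:inflation} places $v$ on a heavy pentagon.

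The only subtlety to be careful about is the scope of the definition of an \emph{essential} triangle in the paper: it is stated only for snarks of colouring defect exactly $3$. This is why the forward direction must first record $\df{G^v}=3$ before applying Lemma~\ref{lem:essential}(i), and it is also why the hypothesis $\df{G}\geq 4$ is the one that produces the contradiction rather than, say, a cardinality statement about cores. Beyond this conceptual point, the proof is a short concatenation of already proved results and requires no genuinely new argument.
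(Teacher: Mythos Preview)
Your proposal is correct and follows exactly the approach the paper intends: the paper merely states that the corollary follows from Theorem~\ref{thm:inflation} and Lemma~\ref{lem:essential}, and your argument spells out precisely how, using Theorem~\ref{thm:hexcore} as the dictionary between ``defect $3$'' and ``possesses a hexagonal core''. Your care about the scope of the definition of an essential triangle is well placed but causes no trouble, since in the forward direction you have $\df{G^v}=3$ by hypothesis before invoking Lemma~\ref{lem:essential}(i).
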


\section{Decomposition theorems}\label{sec:decomposition}

\noindent{}In this section we prove two decomposition theorems,
Theorems~\ref{thm:decomposition} and~\ref{thm:decomp-def3}. The
first of them states that every $2$-connected cubic graph $G$
can be uniquely decomposed into cyclically $4$-edge-connected
graphs in such a way that every vertex of $G$ falls into
precisely one decomposition factor. The second decomposition
theorem concerns specifically snarks of defect~3. The former
probably belongs to mathematical folklore at least in its
existence part. The proof of the latter significantly depends
on the uniqueness of the decomposition established in the
previous result, which is why we provide a detailed proof.

Consider an arbitrary $2$-edge-connected cubic graph $G$. If
$G$ is not cyclically $4$-edge-connected, then $G$ contains a
cycle-separating $3$-edge-cut or $2$-edge-cut. In fact, $G$
contains an \emph{independent} $2$-edge-cut or $3$-edge-cut --
one that consists of pairwise independent edges -- because
every minimum size cycle-separating edge cut in a cubic graph
is independent. For convenience, let us call an independent
edge cut $R$ \emph{small} whenever $2\le |R|\le 3$; note that
an independent edge cut in a cubic graph is automatically
cycle-separating.

Consider a $2$-connected cubic graph $G$ with a small
independent edge cut $R$. Let $L$ be a component of $G-R$. If
$|R|=2$, we define $\bar L$ to be the cubic graph obtained by
adding to $L$ an edge joining its two $2$-valent vertices and
call $\bar L$ the \emph{completion} of $L$ to a cubic graph. If
$|R|=3$, the \emph{completion} of $L$ to a cubic graph is
obtained by adding to $L$ a new vertex $v_L$ and joining $v_L$
to the three $2$-valent vertices of $L$. Thus, for every small
independent edge cut $R$ in $G$ whose removal leaves components
$L_1$ and $L_2$ we can create two smaller cubic graphs $\bar
L_1$ and $\bar L_2$ uniquely determined by $R$ and call the set
$\{\bar L_1, \bar L_2\}$ the \emph{decomposition} of $G$
along~$R$. It is easy to see that both $\bar L_1$ and $\bar
L_2$ are $2$-connected, and both are $3$-connected whenever
$G$~is.

We may continue by further decomposing $\bar L_1$ and $\bar
L_2$, if possible. If any of them contains a small independent
edge cut, we can decompose the graph along such a cut again and
continue as long as possible. The resulting set
$\mathcal{D}=\{G_1,\ldots,G_m\}$ consists of $2$-connected
cubic graphs that contain no cycle-separating $2$-edge-cuts and
$3$-edge-cuts. In other words, each member $G_i\in\mathcal{D}$
is a cyclically $4$-edge-connected cubic graph. We say that
$\mathcal{D}$ is a \emph{decomposition} of $G$ into cyclically
$4$-edge-connected cubic graphs.
In order to keep track of intermediate stages of the
decomposition process we define a \emph{decomposition} of $G$
along small independent edge cuts recursively as follows:
\begin{itemize}
\item[(1)] The singleton $\{G\}$ is a decomposition of $G$.
\item[(2)] If $\{H_1,\ldots,H_q\}$ is a
    decomposition of $G$ and one of its members $H_j$ has a
    decomposition $\{K,L\}$ along a small independent
edge cut, then the set $\{H_1,\ldots, H_{j-1},\penalty0 K, L,    H_{j+1}, \ldots, H_q\}$ is a  decomposition of
    $G$.
\end{itemize}

Observe that every vertex of $G$ belongs to exactly one member
$H_i$ of any decomposition~$\mathcal{B}=\{H_1,\ldots,H_q\}$ of
$G$. The elements of $\mathcal{B}$ will be called
\emph{decomposition factors}. Furthermore, we can reassemble
$G$ from the graphs $H_1, \ldots, H_q$ by means of two standard
operations, a $2$-sum and a $3$-sum.

Let $H$ and $K$ be $2$-connected cubic graphs with
distinguished edges $e$ and $f$, respectively. We define a
\emph{$2$-sum}\emph{ $H\oplus_2 K$} to be a cubic graph
obtained by deleting $e$ and $f$ and connecting the $2$-valent
vertices of $H-e$ to those of $K-f$. The two edges joining
$H-e$ to $K-f$ form an edge cut in $H\oplus_2 K$, called the
\emph{principal} $2$-edge-cut of $H\oplus_2 K$. If instead of
distinguished edges we choose distinguished vertices $u$ and
$v$ of $H$ and $K$, respectively, we can similarly define a
\emph{$3$-sum $G\oplus_3 H$} and the \emph{principal}
$3$-edge-cut of $H\oplus_3 K$. The resulting graphs
$H\oplus_2 K$ and $H\oplus_3 K$ are not uniquely determined,
nevertheless it is easy to see that they are always
$2$-connected.

If $\{H_1,H_2\}$ is a decomposition of a $2$-connected cubic
graph $G$ along a $2$-edge-cut $R$, then $2$-sum can be
performed in such a way that $G=H_1\oplus_2 H_2$ and the
principal $2$-edge-cut coincides with $R$. Similarly, if
$\{H_1,H_2\}$ is a decomposition of $G$ along an independent
$3$-edge-cut $S$, then one can perform $3$-sum in a way that
$G=H_1\oplus_3 H_2$ and the principal $3$-edge-cut coincides
with $S$.

Note that if $\{H_1,H_2\}$ is a decomposition of $G$ along an
independent $3$-edge-cut $S$, then each of $H_1$ and $H_2$ must
have at least four vertices, otherwise one of the components of
$G-S$ would be an isolated vertex, in which case the cut would
not be cycle-separating. Conversely, if $S$ is not
cycle-separating, then one of $H_1$ and $H_2$, say $L_2$, is
isomorphic to the \emph{$3$-dipole} $D_3$, by which we mean the
graph consisting of two vertices joined by three parallel
edges. Consequently, $G=H_1\oplus_3 H_2$ is isomorphic to
$H_1$, which shows that $D_3$ behaves like a neutral element
for the $3$-sum operation. A $3$-sum $H_1\oplus_3 H_2$ where
both $H_1$ and $H_2$ are different from the dipole $D_3$ will
be called \emph{nontrivial}. The definition of a nontrivial
$3$-sum still permits the situation that one or both
distinguished vertices are incident with a pair of parallel
edges. If this is not the case, the $3$-sum will be called
\emph{proper}. It is easy to see that the principal
$3$-edge-cut of a proper $3$-sum is independent.

Summing up, every $2$-connected cubic graph $G$ admits a
decomposition into cyclically $4$-edge-connected cubic graphs
by repeatedly applying decomposition along small independent
edge cuts. The decomposition process can be reversed and the
original graph $G$ can be obtained from the decomposition
factors by iterated $2$-sums and proper $3$-sums.

Every decomposition of a $2$-connected cubic graph $G$
along a small independent edge cut determines the two resulting
graphs uniquely up to isomorphism. We now prove that the entire
decomposition is also uniquely determined by $G$,
irrespectively of the order in which edge cuts are taken during
the decomposition process.

\begin{theorem}\label{thm:decomposition}
Every $2$-connected cubic graph $G$ admits a decomposition into
a collection $\mathcal{D}=\{G_1,\ldots,G_m\}$ of cyclically
$4$-edge-connected cubic graphs such that $G$ can be
reconstructed from them by a repeated application of $2$-sums
and proper $3$-sums. This collection is unique
up to ordering and isomorphism.
\end{theorem}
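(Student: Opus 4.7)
The plan is to prove existence and uniqueness separately, with induction on $|V(G)|$ driving both parts. For existence, I would iterate the decomposition step: starting from $\{G\}$, as long as some factor $H$ contains a small independent edge cut, replace $H$ by its two completed sides $\bar L_1,\bar L_2$. Termination is straightforward. A split along a $2$-edge-cut preserves the total vertex count, with each of $\bar L_1,\bar L_2$ having at least two vertices; a split along an independent $3$-edge-cut raises the total vertex count by $2$, but the cycle-separating hypothesis forces each side to contain a circuit and hence at least four vertices, so each completed factor has strictly fewer vertices than $H$. Consequently the maximum factor size strictly decreases at every step, the process halts at a decomposition into cyclically $4$-edge-connected cubic graphs, and reassembly of $G$ by iterated $2$-sums and proper $3$-sums is immediate from the way the cuts were introduced.

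For uniqueness, the strategy is a diamond-lemma (confluence) argument. I would show that if a factor $H$ in some intermediate decomposition admits two distinct small independent edge cuts $R_1$ and $R_2$, then decomposing $H$ first along $R_1$ and then along the image of $R_2$ in the appropriate completed side produces, up to isomorphism, the same unordered triple of graphs as decomposing in the reverse order. Combined with induction on $|V(G)|$ and the existence part, this local exchange property forces any two terminating decomposition sequences of $G$ to yield the same unordered collection $\mathcal{D}$ of cyclically $4$-edge-connected factors up to isomorphism.

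The crux is therefore a \emph{non-crossing lemma} for small independent edge cuts of a $2$-connected cubic graph: any two such cuts $R_1,R_2$ of $H$ are non-crossing in the sense that the edges of $R_2$, possibly augmented by an edge or vertex inherited from the completion, form a small independent cycle-separating cut in exactly one of the two completed sides produced by decomposition along $R_1$. To prove this, I would assume the cuts cross, write $V(H)=P_{11}\sqcup P_{12}\sqcup P_{21}\sqcup P_{22}$ with $R_1=\delta_H(P_{11}\cup P_{12})$ and $R_2=\delta_H(P_{11}\cup P_{21})$, and count the edges between the four parts. Using cubicity (so that $|\delta_H(S)|\equiv|S|\pmod 2$ for every $S\subseteq V(H)$), the pairwise independence of the edges in each $R_i$, and the cycle-separating property, one of the four parts is forced to have an edge boundary of size smaller than $\min(|R_1|,|R_2|)$ or of size $1$, contradicting either the $2$-edge-connectivity of $H$ or the minimality of $|R_i|$ as a cycle-separating independent cut. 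I expect the trickiest subcase to be the one where both $R_1$ and $R_2$ have size $3$ and a completion introduces a pair of parallel edges in one of the sides; ruling this out requires a careful analysis of how the edges of $R_1$ and $R_2$ are distributed among the four parts $P_{ij}$, and of how the added completion vertex or edge interacts with the inherited cut.
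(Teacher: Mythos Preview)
Your overall architecture---existence by iterating splits until no small independent cut remains, uniqueness by a diamond-lemma/confluence argument combined with induction on $|V(G)|$---matches the paper's strategy, and your termination argument for existence is fine. The problem is the key step you isolate for confluence.

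The non-crossing lemma is false. Two small independent edge cuts of a $2$-connected cubic graph \emph{can} cross with all four quadrants $P_{ij}$ nonempty. Take the ``necklace'' of four digons: vertices $u_1,u_2,v_1,v_2,w_1,w_2,z_1,z_2$, a pair of parallel edges on each of $\{u_1,u_2\}$, $\{v_1,v_2\}$, $\{w_1,w_2\}$, $\{z_1,z_2\}$, and single edges $u_1v_1$, $v_2z_1$, $z_2w_2$, $w_1u_2$. Then $R_1=\{u_2w_1,\,v_2z_1\}$ and $R_2=\{u_1v_1,\,w_2z_2\}$ are disjoint independent $2$-edge-cuts whose four quadrants are the four digons; after splitting along $R_1$, one edge of $R_2$ lands in each completed side, so $R_2$ does not survive as a cut of ``exactly one'' side. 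Your submodular count does show that some $P_{ij}$ has $|\delta(P_{ij})|\le\min(|R_1|,|R_2|)$, but that is no contradiction: the definition of a small independent edge cut carries no minimality hypothesis, and here every $|\delta(P_{ij})|$ has size exactly $2$. Analogous crossing examples exist with $3$-edge-cuts. What the paper does instead of proving non-crossing is a direct case analysis: for each mutual position of $R$ and $R'$ it exhibits a specific second split after $R$ and a specific second split after $R'$ that yield the same unordered triple $\{\bar L_1,\bar L_2,\bar L_3\}$ (or $\{\bar L_1,\bar L_2,L_3^{\sharp}\}$), and then invokes induction on the three strictly smaller pieces. To make your confluence step go through you would likewise have to \emph{handle} the crossing configuration rather than rule it out; indeed, even the paper's own case list tacitly assumes $G-(R\cup R')$ has only three components, so the four-quadrant case deserves explicit treatment in any event.
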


\begin{proof}
The existence of a decomposition into cyclically
$4$-edge-connected graphs has been established above. It
remains to prove its uniqueness. The statement is obviously
true if $G$ is cyclically $4$-edge-connected, that is, if it
has no small independent edge cuts. We therefore consider a
$2$-connected graph $G$ with at least one small independent
edge cut. Let $\mathcal{B}$ be an arbitrary decomposition of
$G$ into two or more smaller graphs. We say that a small
independent edge cut $R$ of $G$ is the \emph{leading cut} of
$\mathcal{B}$ if it is the first cut in the sequence of edge
cuts applied during the decomposition process that results in
$\mathcal{B}$. Clearly, it is  enough to show that any
decomposition of $G$ into cyclically $4$-edge-connected graphs
will contain the same collection of graphs no matter which
leading cut we have chosen.

We first prove the following.

\medskip\noindent
Claim~1. \emph{Let $R$ and $R'$ be any two distinct small
independent edge cuts of $G$. Then $G$ admits decompositions
$\mathcal{B}=\{G_1,G_2,G_3\}$ and
$\mathcal{B}'=\{G_1',G_2',G_3'\}$ with leading cuts $R$
and~$R'$, respectively,  such that $G_i$ is isomorphic to
$G_i'$ for each $i\in\{1,2,3\}$. }

\begin{figure}[h!]
	\centering
	\includegraphics[scale=1.1]{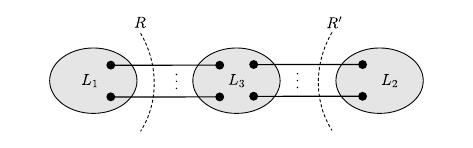}
\caption{Disjoint small independent edge cuts in the proof of
Claim~1}
	\label{fig:decomp_0}
\end{figure}

\noindent \emph{Proof of Claim~1.} First assume that $R\cap
R'=\emptyset$. The graph $G-(R\cup R')$ then consists of three
components $L_1$, $L_2$, and $L_3$, where $\delta_G(L_1)=R$,
$\delta_G(L_2)=R'$, and $\delta_G(L_3)=R\cup R'$, see
Figure~\ref{fig:decomp_0}. If we
decompose $G$ along $R$ first, we obtain a decomposition
$\{\bar L_1, H\}$ with $R'$ being inherited into~$H$. We
proceed by decomposing $H$ along $R'$ to get a decomposition
$\mathcal{A}=\{\bar L_1, \bar L_2, K\}$ of $G$ with $K\supseteq
L_3$. If, on the other hand, we first decompose $G$ along $R'$,
we obtain a decomposition $\{\bar L_2, H'\}$ where $H'$
inherits $R$.  We continue by decomposing $H'$ along $R$ to
obtain a decomposition $\mathcal{A}'=\{\bar L_2, \bar L_1,
K'\}$ of $G$ where $K'\supseteq L_3$. It is easy to see that
$K'$ and $K$ are isomorphic. Thus we can reorder $\mathcal{A}'$
to get the required isomorphisms between the members of
$\mathcal{A}$ and those of $\mathcal{A}'$.

Now assume that $R$ and $R'$ intersect.
Up to symmetry, one of the following four cases occurs:
\begin{itemize}
\item[(a)] $|R|=|R'|=2$ and $|R\cap R'|=1$,
\item[(b)] $|R|=2$, $|R'|=3$, and $|R\cap R'|=1$,
\item[(c)] $|R|=|R'|=3$ and $|R\cap R'|=1$, and
\item[(d)] $|R|=|R'|=3$ and $|R\cap R'|=2$.
\end{itemize}

In each of the four cases $G$ decomposes into three induced
subgraphs $L_1$, $L_2$, and $L_3$ joined between each other by
the edges of $R_1\cup R_2$. Without loss of generality we may
assume that $R=\delta_G(L_1)$ and $R'=\delta_G(L_2)$ and
that $|\delta_G(L_1)|\leq |\delta_G(L_2)|\leq |\delta_G(L_3)|$.
Since the cuts $R$ and $R'$ are independent, each of $L_1$,
$L_2$, and $L_3$ contains at least two vertices. The structure
of $G$
in Cases (a)-(c) is illustrated in
Figures~\ref{fig:decomp_a}-\ref{fig:decomp_c}. In Case (d), the
components of $G-(R\cup R')$ are the same as for
Case~(b), except that this time we consider intersecting edge
cuts $R=\delta_G(L_2)$ and $R'=\delta_G(L_3)$.

\begin{figure}[h!]
\centering
\includegraphics[scale=1.1]{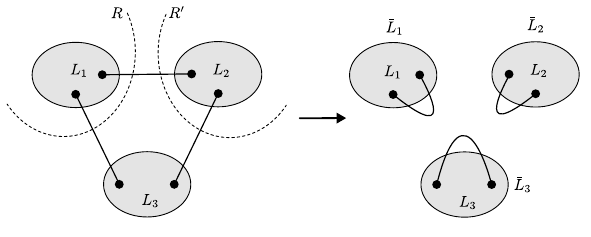}
\caption{Case (a)}
\label{fig:decomp_a}
\end{figure}

In Case (a), we have
$|\delta_G(L_1)|=|\delta_G(L_2)|=|\delta_G(L_3)|=2$. It follows
that
$$
G=
\bar L_1\oplus_2 (\bar L_2\oplus_2 \bar L_3)
=
\bar L_2\oplus_2 (\bar L_1\oplus_2 \bar L_3),
$$
see Figure~\ref{fig:decomp_a}. If we first decompose $G$ along
$R=\delta_G(L_1)$, we get the decomposition $\{\bar L_1,\bar
L_2\oplus_2 \bar L_3\}$. We continue by decomposing $\bar
L_2\oplus_2 \bar L_3$ along its principal edge cut to produce
the decomposition $\mathcal{A}=\{\bar L_1, \bar L_2, \bar
L_3\}$. If we take $R'=\delta_G(L_2)$ to be the leading cut and
proceed similarly, we obtain the decomposition
$\mathcal{A}'=\{\bar L_2, \bar L_1, \bar L_3\}$. Clearly,
$\mathcal{A}$ and $\mathcal{A}'$ with a changed order satisfy
the statement of Claim~1.

\begin{figure}[h!]
\centering
\includegraphics[scale=1.1]{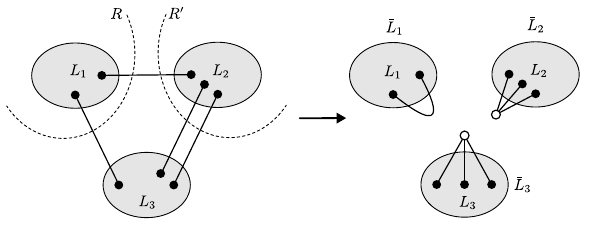}
\caption{Case (b)}
\label{fig:decomp_b}
\end{figure}

In Case (b) we have $|\delta_G(L_1)|=2$ and $|\delta_G(L_2)|=
|\delta_G(L_3)|=3$. It follows that
$$
G=
\bar L_1\oplus_2 (\bar L_2\oplus_3 \bar L_3)
=
\bar L_2\oplus_3 (\bar L_1\oplus_2 \bar L_3),
$$
see Figure~\ref{fig:decomp_b}. The main difference from
Case~(a) is that $\bar L_2$ and $\bar L_3$ have been created
from $L_2$ and $L_3$, respectively, by adding a vertex.
Otherwise, we can proceed analogously to prove that
irrespectively of the choice of the leading cut we end up with
the same decomposition $\{\bar L_1, \bar L_2, \bar L_3\}$ up to
the order of the graphs and isomorphism.

In Case (c) we have $|\delta_G(L_1)|=|\delta_G(L_2)|=3$ and
$|\delta_G(L_3)|=4$. Since both $R$ and $R'$ are independent
edge cuts, $L_1$ and $L_2$ have three $2$-valent vertices each,
while $L_3$ has four. We complete $L_1$ and $L_2$ to cubic
graphs $\bar L_1$ and $\bar L_2$ by adding a new vertex to each
as in Case (b). As regards $L_3$, we realise that the edges of
$\delta_G(L_3)$ naturally split to two pairs
$\delta_G(L_3)\cap\delta_G(L_1)$ and
$\delta_G(L_3)\cap\delta_G(L_2)$. Their endvertices in $L_3$,
which are of degree~$2$ in~$L_3$, split to the corresponding
two vertex pairs $\{u_1,u_2\}$ and $\{v_1,v_2\}$. We now extend
$L_3$ to a cubic graph $L_3^\sharp$ by adding two new adjacent
vertices $u$ and $v$ and join $u$ to both $u_1$ and $u_2$, and
$v$ to both $v_1$ and $v_2$, see Figure~\ref{fig:decomp_c}.
It is easy to see that regardless of whether we choose the leading
cut to be $R$ or $R'$ we will produce the same decomposition
$\{\bar L_1, \bar L_2, L_3^\sharp\}$ up to the order of the
graphs and isomorphism.

\begin{figure}[h!]
\centering
\includegraphics[scale=1.1]{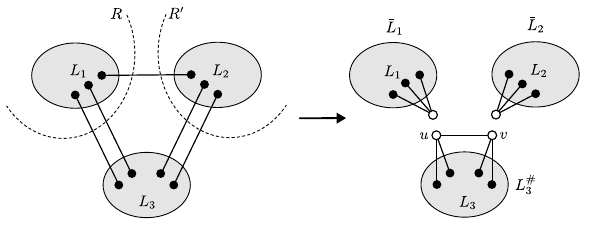}
\caption{Case (c)}
\label{fig:decomp_c}
\end{figure}

As previously mentioned, the structure of components of
$G-(R\cup R')$ in Case (d) is the same as for Case~(b), except
that this time we are considering edge cuts $R=\delta_G(L_2)$
and $R'=\delta_G(L_3)$ with $|R|=|R'|=3$ and $|R\cap R'|=2$. If
we start decomposing $G$ along the cut $R=\delta_G(L_2)$, we
obtain a decomposition $\{\bar L_2, \bar L_1\oplus_2 \bar
L_3\}$, where the distinguished edge of the $2$-sum is an edge
incident with the added vertex of $\bar L_3$. Although the
edges of $\delta_G(L_3)$ still constitute a $3$-edge-cut in
$\bar L_1\oplus_2 \bar L_3$, they are no more independent and
therefore this cut cannot be used for a further decomposition.
Nevertheless, we can continue with the principal edge cut of
$\bar L_1\oplus_2 \bar L_3$ producing the decomposition $\{\bar
L_1, \bar L_2, \bar L_3\}$. The situation starting with
$R'=\delta_G(L_3)$ is completely symmetric to the previous one,
so we again obtain the decomposition $\{\bar L_1, \bar L_2,
\bar L_3\}$. This completes the proof of Claim~1.

\medskip

To finish the proof we employ induction on the number of
vertices to prove that every $2$-connected cubic graph $G$
admits a decomposition into cyclically $4$-edge-connected cubic
graphs that does not depend on the choice of the leading cut.
The statement vacuously holds if $G$ is cyclically
$4$-edge-connected, so we can take cyclically
$4$-edge-connected cubic graphs as the basis of induction.

For the induction step let us assume that $G$ is not cyclically
$4$-edge-connected. It means that $G$ contains at least one
small independent edge cut. First, consider the case where $G$
has exactly one small independent edge cut $R$. Let
$\{G_1,G_2\}$ be the decomposition of $G$ along~$R$. Recall
that $G_1$ and $G_2$ are uniquely determined by $R$. 
By the induction hypothesis, each $G_i$, with $i\in\{1,2\}$,
admits a decomposition
$\mathcal{D}_i$ into cyclically $4$-edge-connected cubic graphs
that does not depend on the choice of the leading cut.
Consequently, $\mathcal{D}=\mathcal{D}_1\cup\mathcal{D}_2$ is
the required unique decomposition of $G$ into cyclically 
$4$-edge-connected cubic graphs.
(In fact, it is easy to see that both $G_1$ and $G_2$ are cyclically $4$-edge-connected, so $\mathcal{D}=\{G_1,G_2\}$.)

It remains to consider the case where $G$ has at least two
distinct small independent edge cuts. Choose $R\ne R'$
arbitrarily. Claim~1 implies that $G$ admits a decomposition
$\{G_1,G_2,G_3\}$ into three smaller $2$-connected cubic graphs
that does not depend on the choice of the leading cut. By the
induction hypothesis, each $G_i$ admits a decomposition
$\mathcal{D}_i$ into cyclically $4$-edge-connected cubic graphs
that does not depend on the choice of the leading cut.
Consequently,
$\mathcal{D}=\mathcal{D}_1\cup\mathcal{D}_2\cup\mathcal{D}_3$
is a decomposition of $G$ into cyclically $4$-edge-connected
cubic graphs that does not depend on the choice of the leading
cut. In other words, $\mathcal{D}$ is the required unique
decomposition of $G$ into cyclically $4$-edge-connected graphs.
\end{proof}

The just established fact that every $2$-connected cubic graph
$G$ has a decomposition into cyclically $4$-edge-connected
factors, which is unique up to isomorphism and order of the
factors, justifies calling it the \emph{canonical
decomposition} of $G$.

\begin{remark}
The requirement that small cycle-separating edge cuts eligible
for a decomposition be independent is substantial: without this
assumption Theorem~\ref{thm:decomposition} fails. Indeed,
consider the graph $G=K_4\oplus_2 K_4$ obtained by a $2$-sum of
two copies of the complete graph $K_4$ with arbitrarily chosen
distinguished edges. If we decompose $G$ along the principal
$2$-edge-cut~$S$, the resulting decomposition will consist of
two copies of $K_4$, which cannot be further decomposed.
Alternatively, we can take a triangle $T$ in one of the
components of $G-S$. Clearly, $\delta_G(T)$ is a
cycle-separating $3$-edge-cut, and the decomposition along it
results in a copy of $K_4$ and a graph isomorphic to
$K_4\oplus_2 D_3$, where $D_3$ is the $3$-dipole. It follows
that, this time, the final decomposition consists of two copies
of $K_4$ and a copy of~$D_3$. In spite of the fact that the
uniqueness claim fails, it remains true that in both cases
every vertex of $G$ falls into precisely one decomposition
factor.
\end{remark}

We now apply Theorem~\ref{thm:decomposition} to snarks of
defect $3$.

\begin{theorem}\label{thm:decomp-def3}
Let $G$ be a $2$-connected cubic graph of defect $3$ and let
$C$ be a hexagonal core of $G$. Then $G$ admits a decomposition
$\mathcal{D}=\{G_1,\dots,G_m\}$ such that all decomposition
factors are cyclically $4$-edge-connected, except possibly
$G_1$, and the following hold:
\begin{itemize}
\item[{\rm (i)}] $G_1$ has defect $3$ and $C$ is a
    hexagonal core of $G_1$;
\item[{\rm (ii)}] each $G_i$ with $i\ge 2$ is
    $3$-edge-colourable; and
\item[{\rm (iii)}] if $G_1$ is not cyclically
    $4$-edge-connected, then it contains a triangle $T$
    such that $C$ intersects $T$ and $G_1/T$ is cyclically
    $4$-edge-connected.
\end{itemize}
Moreover, $G$ can be reconstructed from $\mathcal{D}$ by
repeated application of $2$-sums and $3$-sums.
\end{theorem}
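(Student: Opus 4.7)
The plan is to construct $\mathcal{D}$ by iterating the decomposition process of Theorem~\ref{thm:decomposition}, restricted to small independent edge cuts that avoid the hexagonal core $C$. I would fix an optimal 3-array $\mathcal{M}$ with $\core(\mathcal{M}) = C$. The enabling facts are Lemma~\ref{lem:2+3-cuts}(i), which says no 2-edge-cut meets $C$ (so every 2-cut is eligible), and Lemma~\ref{lem:2+3-cuts}(ii), which tightly controls how 3-edge-cuts can cross $C$.

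The core step is a single-step inheritance lemma: if $R$ is a small independent cut of a defect-$3$ cubic graph $H$ with hexagonal core $C$ and $R \cap C = \emptyset$, then the Parity Lemma applied to the proper 3-edge-colouring of $H - E(C)$ forces the edges of $R$ to receive matching colours when $|R|=2$ or pairwise distinct colours when $|R|=3$. Consequently, $\mathcal{M}$ descends to both cubic completions: the side containing $C$ retains a 3-array whose core is still $C$, while the opposite side inherits a proper 3-edge-colouring. Iterating this procedure yields $\mathcal{D} = \{G_1, G_2, \ldots, G_m\}$ where $G_1$ is the unique factor retaining $C$ as hexagonal core (giving~(i)), and each other $G_i$ is 3-edge-colourable (giving~(ii)). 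Since every small independent cut avoiding $C$ has been decomposed, the factors $G_2, \ldots, G_m$ carry no remaining small independent cuts and are therefore cyclically 4-edge-connected; uniqueness up to ordering and isomorphism follows from Theorem~\ref{thm:decomposition}, and reconstruction via 2-sums and 3-sums is built into the construction.

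For (iii), suppose $G_1$ is not cyclically 4-edge-connected. Then $G_1$ has a small independent cut $R$ which, by construction, must intersect $C$; Lemma~\ref{lem:2+3-cuts}(i) rules out $|R|=2$, so $R$ is a 3-edge-cut. Lemma~\ref{lem:2+3-cuts}(ii) then produces a unique side $Q$ of $G_1 - R$ with $G_1/Q$ a snark and $C \cap Q$ a single uncovered edge $v_0v_1$; the complementary completion $\bar Q = G_1/Q'$ is therefore 3-edge-colourable. The crux of the proof is to identify $Q$ with a triangle $T = \{v_0, v_1, w\}$, where $w$ is the endpoint in $Q$ of the third edge of $R$. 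Combining the 3-edge-colourability of $\bar Q$ with the inherited Fano structure around the new vertex $u$ (which forces $\bar Q$ to carry a 3-array whose core is the triangle on $\{u,v_0,v_1\}$) and the absence of any remaining small decomposable cuts inside $Q$, I would argue that $\bar Q$ must reduce to $K_4$, so that $Q = T$ is indeed a triangle. Lemma~\ref{lem:essential}(iii) then guarantees that $T$ is the unique triangle of $G_1$ intersecting $C$, so $\delta(T)$ is the only small independent cut of $G_1$, and $G_1/T$ is cyclically 4-edge-connected. I expect the main obstacle to be precisely this final structural pin-down of $Q$ as a triangle: the generic inheritance bookkeeping needs to be supplemented by a case analysis of how the two doubly covered edges and the simply covered third edge of $R$ interact with the cover patterns on the interior of $Q$ to exclude any configuration larger than $K_4$.
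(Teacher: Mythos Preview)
Your plan for (i) and (ii) is essentially the paper's: decompose along all small independent cuts disjoint from $C$, use the Kirchhoff law for the associated Fano flow to see that a $2$-cut carries twice the same simple colour and a $3$-cut carries $1,2,3$, so the $3$-array descends with core $C$ to the side containing $C$ and as a proper $3$-edge-colouring to the other side. That part is fine.

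The gap is in (iii). Your proposed route --- $3$-edge-colourability of $\bar Q$, an alleged inherited $3$-array on $\bar Q$ with ``triangular core'', and absence of cuts inside $Q$ --- does not force $\bar Q\cong K_4$. In fact no $3$-array is inherited by $\bar Q$: at the new vertex $u$ the restricted Fano flow has two doubly covered edges and one simply covered edge, which is not one of the four admissible $F_4$ lines, so the parenthetical claim is false. And $3$-edge-colourability of $\bar Q$ together with ``no small cut inside $Q$'' places no upper bound on $|Q|$ by itself.

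The missing idea is a \emph{replacement cut}. Write $R=\{e_0,e_2,g\}$ where $e_0,e_2\in E(C)$ are the two doubly covered edges and $g\notin E(C)$. The uncovered edge $e_1$ of $C$ between $e_0$ and $e_2$ lies in $Q$, and the two edges $f_1,f_2\in\delta_{G_1}(C)$ incident with the endpoints of $e_1$ also lie in $Q$. Now $\{f_1,f_2,g\}$ is again a $3$-edge-cut of $G_1$ (it separates $Q-\{v_1,v_2\}$ from the rest), and it is disjoint from $C$. But $G_1$ was constructed so that it has \emph{no} small independent cut disjoint from $C$. Hence $\{f_1,f_2,g\}$ is not independent, which forces $f_1$ and $f_2$ to share an endpoint $w$, so $T=(f_1e_1f_2)$ is a triangle in $Q$. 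Three-connectivity of $G_1$ then yields $\delta_{G_1}(T)=\{e_0,e_2,g\}=R$, whence $Q=T$ and $G_1/T$ is cyclically $4$-edge-connected. Once you see this swap $e_0\mapsto f_1$, $e_2\mapsto f_2$, the case analysis you anticipated evaporates.
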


\begin{proof}
If $G$ is cyclically $4$-edge-connected, we have nothing to
prove. In the rest of the proof we may therefore assume that
$G$ is not cyclically $4$-edge-connected, and thus the
canonical decomposition into cyclically $4$-edge-connected
graphs has at least two decomposition factors.
Theorem~\ref{thm:decomposition} states that the order in which
small independent edge cuts are used for a decomposition is
irrelevant. It follows that to produce the canonical
decomposition we may at first apply decomposition along
$2$-edge-cuts as long as possible. As a result, we obtain a
 decomposition $\mathcal{X}=\{X_1,...,X_r\}$ of
$3$-connected cubic graphs such that $G$ can be reconstructed
from them by repeated $2$-sums.

Choose an optimal $3$-array $\mathcal{M}$ for $G$, and let $C$ be its
hexagonal core. Consider an arbitrary $2$-edge-cut $R$ in $G$.
Lemma~\ref{lem:2+3-cuts}~(i) implies that $C\cap R=\emptyset$,
so decomposing $G$ along $R$ produces two graphs $K$ and $L$ 
such that, say, $K$ inherits the core. Parity Lemma implies that 
both edges of $R$ belong to the same perfect matching from $\mathcal{M}$, 
so $C$ is a core of $K$ and $L$ is $3$-edge-colourable. In
particular, $\df{K}=3$.  By applying the same consideration again 
and again we conclude that
exactly one $X_i\in\mathcal{X}$, say $X_1$, contains the
original hexagonal core $C$ of $G$ and all other graphs in
$\mathcal{X}$ are $3$-edge-colourable. Leaving $X_1$ intact, we
decompose all graphs $X_i$ with $i\ge 2$ to cyclically
$4$-edge-connected factors to produce a set $\mathcal{Y}$ of
cyclically $4$-edge-connected graphs such that
$\{X_1\}\cup\mathcal{Y}$ is a  decomposition of $G$.

At last, we process $X_1$. If $X_1$ contains no independent
$3$-edge-cut, then $\{X_1\}\cup\mathcal{Y}$ is the canonical
decomposition of $G$ satisfying the statement of this theorem.
So we may assume that $X_1$ does contain a independent
$3$-edge-cut $S$.

If $C\cap S=\emptyset$, then we are in a similar situation as
previously with $2$-edge-cuts. Thus $X_1$ decomposes along $S$
into two graphs $K_1$ and $L_1$ such that, say, $K_1$ has
defect $3$ and inherits the core $C$ while $L_1$ is
$3$-edge-colourable. We continue in this manner as long as
there are $3$-edge-cuts disjoint from~$C$, producing a
decomposition $\mathcal{X}_1=\{X_{11},\ldots,X_{1t}\}$ of
$X_1$, where, say, $X_{11}$ inherits the core $C$ and is the
only member of $\mathcal{X}_1$ with defect $3$.

The graph $X_{11}$ is $3$-connected and contains no independent
$3$-edge-cut disjoint from~$C$. If $X_{11}$ has no independent
$3$-edge-cut at all, then $\mathcal{X}_1\cup\mathcal{Y}$ is a
canonical decomposition of $G$ satisfying the statement of the
theorem, with $C$ being a hexagonal core of
\mbox{$G_1=X_{11}$}. Otherwise, there is an independent
$3$-edge-cut $S_1$ in $X_{11}$ such that $C\cap
S_1\ne\emptyset$. Let $Q_1$ and $Q_2$ be the components of
$X_{11}-S_1$. Lemma~\ref{lem:2+3-cuts}~(ii) states that the
intersection $C\cap S_1$ consists of two doubly covered edges.
Adopting the notation for $C$ and $\delta_{X_{11}}(C)$ as in
Figure~\ref{fig:core3}, we may assume that $C\cap
S_1=\{e_0,e_2\}$. One of the components of $X_{11}-S_1$, say
$Q_2$, contains the uncovered edge $e_1$ of $C$ adjacent to
both $e_0$ and $e_2$. The edges $f_1$ and $f_2$ of
$\delta_{X_{11}}(C)$ that are adjacent to $e_0$ also belong to
$Q_2$. Let $g$ be the third edge of $S_1$. The edges $f_1$ and
$f_2$ cannot be independent for otherwise $\{f_1,f_2,g\}$ would
be an independent $3$-edge-cut in $X_{11}$ disjoint from $C$.
Thus $T=(f_1e_1f_2)$ is a triangle in~$X_{11}$. Moreover, since
$X_{11}$ is $3$-connected, $\delta_{X_{11}}(T)=\{e_0,e_2,g\}$;
in particular, $Q_2=T$. Now, if we decompose $X_{11}$ along
$S_1$, we obtain the decomposition $\{\bar Q_1,\bar Q_2\}$
where $\bar Q_1$ is isomorphic to $X_{11}/T$ and $\bar  Q_2$ is
isomorphic to the complete graph~$K_4$. Clearly, both $\bar
Q_1$ and $\bar Q_2$ are cyclically $4$-edge-connected, so
$\mathcal{D}=\{\bar Q_1,\bar
Q_2\}\cup\mathcal{X}_1\cup\mathcal{Y}$ is a canonical
decomposition of~$G$. Observe that $C$ has been
inherited to $X_{11}$ from $G$ intact; in other words,
$X_{11}=\bar Q_1\oplus_3 \bar Q_2$ contains a hexagonal core,
namely $C$, intersecting a triangle. It is now sufficient to
set $G_1=X_{11}$, and the statement of the theorem follows.
This completes the~proof.
\end{proof}

\section{Perfect matching covers of cubic graphs with triangles}\label{sec:cover_triangles}
\noindent{}In this section we prove
that every snark with colouring defect $3$ that contains a hexagonal core
intersecting a triangle can have its edges covered with four
perfect matchings.

Consider a covering $\mathcal{P}$ of the edge set of a cubic
graph $G$ with four perfect matchings $P_1$, $P_2$, $P_3$, and
$P_4$. For short, we call $\mathcal{P}$ a
\emph{perfect-matching $4$-cover}. Since every vertex $v$ of
$G$ is incident with all members of $\mathcal{P}$, exactly one
edge $e$ incident with $v$ is doubly covered, that is, it
belongs to two members of $\mathcal{P}$. It is evident that the
doubly covered edges form a perfect matching of $G$.

We start with a simple lemma.

\begin{lemma}\label{lem:2+3-sum-4cover}
Let $H$ and $K$ be $2$-connected cubic graphs. If $\pi(H)=4$
and $K$ is $3$-edge-colourable, then $\pi(H\oplus_2 K)=4$ and
$\pi(H\oplus_3 K)=4$ for any choice of distinguished edges or
vertices.
\end{lemma}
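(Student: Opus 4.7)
The plan is to verify both equalities by showing $\pi(H\oplus_i K)\ge 4$ and $\pi(H\oplus_i K)\le 4$ separately for $i\in\{2,3\}$. The lower bound reduces to ruling out a proper $3$-edge-colouring of the $i$-sum, while the upper bound is achieved by an explicit construction that combines a perfect-matching $4$-cover of $H$ with a $3$-edge-colouring of $K$.

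For the lower bound, suppose for contradiction that $\sigma$ is a proper $3$-edge-colouring of $H\oplus_i K$. The Parity Lemma applied to either side of the principal $i$-edge-cut forces the two cut edges in the $2$-sum case to share a colour, and forces the three cut edges in the $3$-sum case to receive the three distinct colours (since no distribution involving a repeated colour sums to zero in $\mathbb{Z}_2\times\mathbb{Z}_2$). Transplanting those colours onto the distinguished edge $e$ of $H$ (respectively onto the three edges $h_1,h_2,h_3$ at the distinguished vertex $u$) while keeping $\sigma$ on the rest of $H$ then yields a proper $3$-edge-colouring of $H$, contradicting $\pi(H)\ge 4$.

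For the upper bound, fix a perfect-matching $4$-cover $\mathcal{P}=\{P_1,P_2,P_3,P_4\}$ of $H$ and colour classes $C_1,C_2,C_3$ of a proper $3$-edge-colouring of $K$. In the $3$-sum case, let $h_j$ denote the edge from $u$ to $u_j$, let $k_j$ denote the edge from $v$ to $v_j$, and let $a_j=u_jv_j$ be the cut edges. After a permutation of colours I may assume $k_j\in C_j$, which makes $C_j-k_j$ a near-perfect matching of $K-v$ missing only the vertex $v_j$. Each $P_i$ contains a unique edge $h_{j(i)}$ at $u$, and I set
\[
P_i^{\ast}=(P_i-h_{j(i)})\cup\{a_{j(i)}\}\cup(C_{j(i)}-k_{j(i)}).
\]
A straightforward inspection confirms that each $P_i^{\ast}$ is a perfect matching of $H\oplus_3 K$. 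Edge coverage follows from three observations: edges of $H-u$ are covered because $\mathcal{P}$ covers them; each cut edge $a_j$ is covered because $h_j\in P_i$ for some $i$; and $\bigcup_{j=1}^{3}(C_j-k_j)=E(K-v)$, with every $j\in\{1,2,3\}$ realised as some $j(i)$ precisely because $\mathcal{P}$ is a cover of $H$.

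The $2$-sum case is analogous. Taking $C_1$ to be the colour class containing $f$, I set $P_i^{\ast}=(P_i-e)\cup\{a,b\}\cup(C_1-f)$ whenever $e\in P_i$, and $P_i^{\ast}=P_i\cup N_i$ whenever $e\notin P_i$, where the perfect matchings $N_i$ of $K-f$ are drawn from $\{C_2,C_3\}$ so that both classes occur among the $N_i$ (there are at least two such indices $i$, since at most two members of $\mathcal{P}$ can contain $e$ by the cubic constraint at $u_1$). Coverage then reduces to the identity $E(K-f)=(C_1-f)\cup C_2\cup C_3$ and to the fact that the cut edges $a,b$ are included whenever $e\in P_i$. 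The principal obstacle throughout is coordinating the $K$-side coverage with the way $\mathcal{P}$ meets the distinguished edge or vertex of $H$; aligning the colour classes so that $k_j\in C_j$ (respectively $f\in C_1$) resolves this, while the cover property of $\mathcal{P}$ guarantees that every needed colour class appears in the construction.
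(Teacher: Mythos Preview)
Your proof is correct and follows essentially the same approach as the paper: combine a perfect-matching $4$-cover of $H$ with the colour classes of a $3$-edge-colouring of $K$, splitting into cases according to how the cover meets the distinguished edge or vertex. You are in fact more thorough than the paper, which only writes out the $2$-sum case (leaving the $3$-sum to the reader) and omits the lower-bound verification entirely; your Parity-Lemma argument for $\pi\ge 4$ and your explicit $3$-sum construction fill those gaps cleanly.
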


\begin{proof}
We prove the lemma for $2$-sums; the proof for $3$-sums is
similar and therefore left to the reader. Let $e$ and $f$ be
the distinguished edges of $H$ and $K$ for the $2$-sum,
respectively, and let $\{g,g'\}$ be the principal $2$-edge-cut
of $H\oplus_2 K$. Take an arbitrary perfect-matching $4$-cover
$\mathcal{P}=\{P_1,P_2,P_3,P_4\}$ of $H$. Since $K$ is
$3$-edge-colourable, it contains three pairwise disjoint
perfect matchings $M_1$, $M_2$, and $M_3$ constituting the
colour classes of a 3-edge-colouring of $H$. Without loss of
generality assume that $f\in M_1$. There are two cases
depending on whether $e$ is simply or doubly covered,
respectively. If $e$ is simply covered, we may clearly assume
that $e\in P_1$. Set $M_4=M_2$, $Q_1=(P_1\cup
M_1-\{e,f\})\cup\{g,g'\}$, and for $i\in\{2,3,4\}$ set
$Q_i=P_i\cup M_i$. Then $\{Q_1,Q_2,Q_3,Q_4\}$ is a
perfect-matching $4$-cover of $G$. If $e$ is doubly covered, we
may assume that $e\in P_1\cap P_4$. Set $M_4=M_1$ and
$Q_i=(P_i\cup M_i-\{e,f\})\cup\{g,g'\}$ for $i\in\{1,4\}$ and
$Q_i=P_i\cup M_i$ for $i\in\{2,3\}$. Again,
$\{Q_1,Q_2,Q_3,Q_4\}$ is a perfect-matching $4$-cover of $G$.
\end{proof}

Next, we recall two results from \cite{KMNS-Berge}. The first
of them reveals a remarkable property of $6$-edge-cuts in
bridgeless cubic graphs

\begin{theorem}\label{thm:6cuts}
Let $G$ be a bridgeless cubic graph and let $H\subseteq G$ be a
subgraph with $|\delta_G(H)|=6$. Then $H$ has a perfect
matching, or else $H$ contains an independent set $S$ of
vertices such that
\begin{enumerate}[{\rm (i)}]
\item every vertex of $S$ is $3$-valent in $H$,
\item $\odd(H-S) =|S|+ 2$, and
\item $|\delta_G(L)| = 3$ for each component $L$ of $H-S$.
\end{enumerate}
Moreover, the graph $H/(H-S)$ obtained from $H$ by contracting
each component of $H-S$ to a vertex is bipartite.
\end{theorem}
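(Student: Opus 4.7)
The plan is to assume $H$ has no perfect matching and to produce $S$ by a Tutte-style deficiency argument refined by an edge count forced by $|\delta_G(H)|=6$. As preparation, observe that since every vertex of $V(H)$ has degree $3$ in $G$, the identity $3|V(H)| = 2|E(H)| + |\delta_G(H)| = 2|E(H)| + 6$ shows that $|V(H)|$ is even. Consequently, for every $T \subseteq V(H)$ the quantity $\odd(H-T) - |T|$ is even, because $\odd(H-T) \equiv |V(H-T)| \pmod{2}$. If $H$ admits no perfect matching, Tutte's theorem furnishes some $T$ with $\odd(H-T) > |T|$, and the parity observation upgrades this to $\odd(H-T) \ge |T|+2$. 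I would choose such an $S \subseteq V(H)$ with $|S|$ as small as possible.

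The first task is to show that this minimality forces $S$ to be independent and to consist of vertices that are $3$-valent in $H$. Suppose instead that some $v \in S$ either has a neighbour in $S$ or satisfies $\deg_H(v) \le 2$; in either case $v$ is adjacent in $H$ to at most two components of $H-S$. Let $o \in \{0,1,2\}$ be the number of odd components among them. A short parity check shows that passing from $S$ to $S' = S \setminus \{v\}$ changes $\odd$ by $+1$ when $o=0$ and by $-1$ when $o \in \{1,2\}$, while $|S|$ drops by~$1$. In every case the deficiency $\odd(H-S') - |S'|$ remains at least $2$, contradicting the minimality of $|S|$. This yields property (i) together with the independence of~$S$.

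The remaining properties follow from an edge count in $G$. Since $S$ is independent and each $v \in S$ is $3$-valent in $H$, no edge of $\delta_G(H)$ meets $S$, and the $3|S|$ edges of $G$ incident with $S$ all run from $S$ into components of $H-S$. Because distinct components of $H-S$ share no edge (we may take $H$ induced, as is standard in the intended application), the cuts $\delta_G(L)$ taken over components $L$ of $H-S$ are pairwise edge-disjoint and together comprise exactly the $3|S|$ edges from $S$ to components together with the $6$ edges of $\delta_G(H)$. Writing $t$ for the number of even components of $H-S$ and using that in a bridgeless cubic graph $|\delta_G(L)| \ge 3$ when $|V(L)|$ is odd and $|\delta_G(L)| \ge 2$ when $|V(L)|$ is even, we obtain
\[
3|S| + 6 \;=\; \sum_{L} |\delta_G(L)| \;\ge\; 3\,\odd(H-S) + 2t \;\ge\; 3(|S|+2) + 2t.
\]
Hence $t=0$, $\odd(H-S) = |S|+2$, and $|\delta_G(L)| = 3$ for every component $L$, delivering (ii) and (iii). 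Bipartiteness of $H/(H-S)$ then comes for free: $S$ carries no internal edges and distinct components carry no mutual edges, so every non-loop edge of the contraction runs between $S$ and a contracted component.

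The main technical step is the parity case analysis that forces both independence and $3$-valency of $S$; once these are in place, the edge count is elementary and the bipartite conclusion is immediate. I expect the only delicate point to be extracting both minimality-driven simplifications simultaneously, since they rest on the same computation of how $\odd$ shifts when a single vertex leaves~$S$.
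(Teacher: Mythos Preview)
Your proof is correct and follows the standard Tutte--Berge route: pick a minimum-size set $S$ with $\odd(H-S)\ge|S|+2$, use minimality to force independence and $3$-valency in $H$ via the case analysis on how $\odd$ shifts when a single vertex is removed from $S$, and then balance the exact count $\sum_L|\delta_G(L)|=3|S|+6$ against the parity bounds $|\delta_G(L)|\ge 3$ for odd components and $\ge 2$ for even ones. The paper does not supply its own proof of this theorem; it is quoted from~\cite{KMNS-Berge}, and the paper only adds the remark that the bipartiteness of $H/(H-S)$ is an immediate consequence of (i)--(iii), exactly as you observe.

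Two small points are worth recording. First, your edge count tacitly uses that $H$ is an induced subgraph of $G$, so that distinct components of $H-S$ are not joined by an edge of $G$; you flag this, and it holds in every application in the paper (where $H=G-V(K)$ for some vertex set). Second, the lower bound $|\delta_G(L)|\ge 2$ for even components $L$ relies on $G$ being connected---otherwise an entire component of $G$ could sit inside $V(H)\setminus S$ with $|\delta_G(L)|=0$. This is automatic in the paper's setting, since $G$ is always a snark and hence $2$-connected.
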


The structure of a cubic graph $G$ with a $6$-edge-cut
$\delta_G(H)$ where $H$ is an induced subgraph with no perfect
matching is indicated in Figure~\ref{fig:Tutte}. The original
statement of Theorem~\ref{thm:6cuts} in \cite{KMNS-Berge} does
not include information about the structure of the graph
$H/(H-S)$, nevertheless, it is an immediate consequence of
Items~(i)-(iii).

\begin{figure}[h!]
\centering
\includegraphics[scale=1.1]{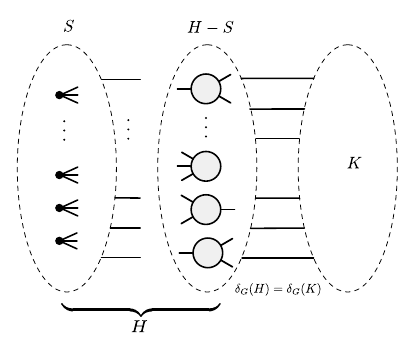}
\caption{A cubic graph $G$ containing a subgraph $H$ with
$|\delta_G(H)|=6$ that contains no perfect matching.}
\label{fig:Tutte}
\end{figure}

A cubic graph $G$ is \emph{almost bipartite} if it
is bridgeless, not bipartite, and contains two edges $e$ and
$f$ such that $G-\{e,f\}$ is a bipartite graph. The edges $e$
and $f$ are said to be \emph{surplus edges} of $G$. If $G$ is
almost bipartite, then there exists a component $K$ of $G$ such
that the surplus edges connect vertices within different
partite sets of $K$ \cite[Proposition~4.3]{KMNS-Berge}.

\begin{theorem}\cite[Theorem 4.4]{KMNS-Berge}\label{thm:albip}
Every almost bipartite cubic graph is $3$-edge-colourable.
\end{theorem}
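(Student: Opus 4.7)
The plan is to produce a perfect matching $M$ of $G$ that contains both surplus edges $e$ and $f$. Once such an $M$ is available, the $2$-factor $G-M$ lies entirely in $H:=G-\{e,f\}$, which is bipartite, so every component of $G-M$ is an even circuit. Properly $2$-edge-colouring these circuits and assigning a third colour to $M$ then yields a proper $3$-edge-colouring of $G$. Using the cited Proposition~4.3 of \cite{KMNS-Berge}, I may assume that the component of $G$ containing $e$ and $f$ admits a bipartition $(A,B)$ of $H$ with $e=a_1a_2\subseteq A$ and $f=b_1b_2\subseteq B$; the other components are bipartite and cubic, hence $3$-edge-colourable by K\"onig's theorem, and are handled separately. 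Summing $H$-degrees on each side gives $3|A|-2=3|B|-2$, so $|A|=|B|$.

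A perfect matching of $G$ through $e$ and $f$ corresponds bijectively to a perfect matching of the bipartite graph $G^{\ast}=G-\{a_1,a_2,b_1,b_2\}$ with parts $A^{\ast}=A\setminus\{a_1,a_2\}$ and $B^{\ast}=B\setminus\{b_1,b_2\}$ of equal size. I would verify Hall's condition for $G^{\ast}$: for $S\subseteq A^{\ast}$ set $T=N_H(S)\subseteq B$ and $k=|T\cap\{b_1,b_2\}|$. Every $v\in S$ has $H$-degree $3$, while every $v\in T$ has $H$-degree $3$ except $b_1,b_2$, whose $H$-degree is $2$; hence a double count of $H$-edges between $S$ and $T$ gives $3|S|\le 3|T|-k$. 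It follows that $|N_{G^{\ast}}(S)|=|T|-k\ge |S|-\tfrac{2}{3}k$, and integrality forces $|N_{G^{\ast}}(S)|\ge|S|$ whenever $k\in\{0,1\}$. The only possible failure of Hall's condition is therefore the borderline case $k=2$ with $|T|=|S|+1$.

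The main obstacle is to rule out this residual case, and here I would invoke the hypothesis that $G$ is bridgeless. Set $U=S\cup T$. The above count shows that exactly $3|T|-k-3|S|=1$ edge of $H$ leaves $T$ to $A\setminus S$, and no $H$-edge of $S$ exits $U$ at all. The surplus edge $f=b_1b_2$ has both ends in $T\subseteq U$, while $e=a_1a_2$ has both ends in $A\setminus A^{\ast}\subseteq V(G)\setminus U$, since $S\subseteq A^{\ast}$ is disjoint from $\{a_1,a_2\}$ and $T\subseteq B$. Consequently $|\delta_G(U)|=1$; because $U\supseteq\{b_1,b_2\}$ and $V(G)\setminus U\supseteq\{a_1,a_2\}$ are both non-empty, this unique edge would be a bridge of $G$, contradicting bridgelessness. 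Hall's condition therefore holds, so $G^{\ast}$ has a perfect matching $M^{\ast}$, and $M=M^{\ast}\cup\{e,f\}$ is the perfect matching of $G$ called for in the opening paragraph, completing the proof.
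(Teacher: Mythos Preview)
The paper does not give its own proof of this statement; it is quoted verbatim from \cite[Theorem~4.4]{KMNS-Berge} and used as a black box in the proof of Theorem~\ref{thm:triancore}. So there is no in-paper argument to compare against.

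Your proof is correct. The reduction via Proposition~4.3 of \cite{KMNS-Berge} to a single component with $e=a_1a_2\subseteq A$ and $f=b_1b_2\subseteq B$ is sound (the other components are bipartite cubic, hence $3$-edge-colourable by K\"onig). The Hall argument on $G^{\ast}=G-\{a_1,a_2,b_1,b_2\}$ is clean: the double count $3|S|\le 3|T|-k$ with $T=N_H(S)$ and $k=|T\cap\{b_1,b_2\}|$ handles $k\in\{0,1\}$ by integrality, and the only residual case $k=2$, $|T|=|S|+1$ is correctly eliminated by observing that $U=S\cup T$ then satisfies $|\delta_G(U)|=1$ (all $3|S|$ $H$-edges from $S$ stay in $U$; exactly $3|T|-2-3|S|=1$ $H$-edge leaves $T$; and $f\subseteq U$ while $e\cap U=\emptyset$), contradicting bridgelessness. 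The resulting perfect matching $M=M^{\ast}\cup\{e,f\}$ leaves a bipartite $2$-factor, which is $2$-edge-colourable, and the third colour on $M$ finishes the job.
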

b
Now we are ready for the main result of this section.

\begin{theorem}\label{thm:triancore}
Let $G$ be a snark with colouring defect $3$. If $G$ contains a hexagonal
core intersecting a triangle, then $\pi(G)=4$.
\end{theorem}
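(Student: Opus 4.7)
Let $\mathcal{M} = \{M_1, M_2, M_3\}$ be an optimal $3$-array of $G$ whose hexagonal core $C$ intersects $T$, and let $U$ denote the set of three uncovered edges of $\mathcal{M}$. Since the uncovered edges alternate with the doubly covered edges along $C$, the set $U$ is a perfect matching of $V(C)$. Consequently, $\pi(G) \leq 4$ will follow once we exhibit a fourth perfect matching $M_4$ of $G$ containing $U$; equivalently, once we verify that the subgraph $H := G - V(C)$ admits a perfect matching $N$, in which case $M_4 := U \cup N$ together with $M_1, M_2, M_3$ forms a perfect-matching $4$-cover. Since $G$ is a snark we already have $\pi(G) \ge 4$, so the entire proof reduces to finding such an $N$.

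My plan is to apply Theorem~\ref{thm:6cuts} to $H$. Because $|\delta_G(V(H))| = |\delta_G(C)| = 6$, the theorem asserts that either $H$ has the required perfect matching -- in which case we are done -- or else there is an independent set $S \subseteq V(H)$ of vertices that are $3$-valent in $H$, with $\odd(H-S) = |S| + 2$, each component $L$ of $H-S$ satisfying $|\delta_G(L)| = 3$, and the quotient $H/(H-S)$ bipartite. I aim to derive a contradiction in this bad case, thereby forcing $H$ to admit a perfect matching.

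To establish the contradiction I exploit the rigidity supplied by the triangle~$T$. By Lemma~\ref{lem:essential}(ii), $C \cap T$ is a single uncovered edge $v_0 v_1$ and $\delta_G(T)$ is independent, so the third vertex $w$ of $T$ lies outside $C$ and has a unique neighbour $w'$ in $V(H)$. Hence $\deg_H(w) = 1$, which forces $w \notin S$ and places $w$ in some component $L_w$ of $H-S$ with $v_0 w, v_1 w \in \delta_G(L_w)$. Because $|\delta_G(L_w)| = 3$, the third edge of $\delta_G(L_w)$ is either $w w'$ (whence $L_w = \{w\}$ and $w' \in S$) or an edge leaving $L_w$ at a vertex distinct from $w$ (whence $w' \in L_w$). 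Combining these structural constraints with the Fano colouring of $\mathcal{M}$ restricted to $G - E(C)$, which is a proper $3$-edge-colouring whose colours on the six edges of $\delta_G(C)$ follow the pattern of Theorem~\ref{thm:hexcore}(iii), and applying Parity Lemma to each cut $\delta_G(L)$, the colours available on $\delta_G(L)$ are tightly constrained for every component $L$. The bipartite incidence $H/(H-S)$ then allows a simultaneous and consistent colour permutation on each component of $H-S$, extending the colouring of $G - E(C)$ to a proper $3$-edge-colouring of the whole of $G$, contradicting the snark hypothesis.

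The principal obstacle will be verifying the global consistency of this colour propagation through the bipartite quotient $H/(H-S)$. Since the $|S| + 2$ components outnumber $S$, the bipartite incidence carries a non-trivial matching deficiency that must be exploited; in particular, at the exceptional component $L_w$ the constraints coming from the uncovered edge $v_0 v_1$ and from the independent cut $\delta_G(T)$ must be reconciled. I anticipate this step invoking Theorem~\ref{thm:albip} to colour the residual almost bipartite graph obtained after colours have been fixed on a spanning subset of the component cuts, and dealing separately with the two subcases $L_w = \{w\}$ and $w' \in L_w$. Once the contradiction is in hand, $H$ must admit a perfect matching $N$, and $\{M_1, M_2, M_3, U \cup N\}$ is the required perfect-matching $4$-cover of $G$, giving $\pi(G) = 4$.
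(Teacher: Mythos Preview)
Your overall strategy---find a fourth perfect matching containing the three uncovered edges by showing that the complement of the hexagon has a perfect matching, and invoke Theorem~\ref{thm:6cuts} to reach a contradiction otherwise---is the same as the paper's. What you have written, however, is a plan rather than a proof, and the two decisive steps are either absent or only gestured at.

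First, you skip the reduction. The paper opens by applying Theorem~\ref{thm:decomp-def3} together with Lemma~\ref{lem:2+3-sum-4cover} to reduce to the case where $G/T$ is cyclically $4$-edge-connected. This is not cosmetic: once $G/T$ is cyclically $4$-edge-connected, every component $L$ of $H-S$ (with $|\delta_G(L)|=3$) must be a single vertex, so $H$ itself is bipartite, not merely the quotient $H/(H-S)$. Without this reduction your components can be arbitrary $3$-edge-colourable lumps, and the phrase ``a simultaneous and consistent colour permutation on each component of $H-S$'' has no operational content: permuting colours inside a component $L$ alters all three colours on $\delta_G(L)$ at once, and you give no mechanism for choosing these permutations so that they agree at every vertex of $S$ while simultaneously producing a pattern on $\delta_G(C)$ that extends to a proper $3$-edge-colouring of $E(C)$. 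Indeed, the two patterns $1,1,2,2,3,3$ and $1,2,2,3,3,1$ furnished by Theorem~\ref{thm:hexcore}(iii) do \emph{not} extend to~$C$, so some global recolouring is genuinely required, and you have not supplied it.

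Second, even granting the reduction, the contradiction is not carried out. You say you ``anticipate'' invoking Theorem~\ref{thm:albip} on a ``residual almost bipartite graph'', but no such graph is defined. The paper proceeds quite differently: it removes the eight vertices of $K=C\cup T\cup\{w\}$ (not just $V(C)$), enumerates the nine colouring types on the resulting $6$-edge-cut that are compatible with $\bar H$ but not with $\bar K$ (Table~\ref{tbl:ctypes}), and then constructs a specific auxiliary cubic graph $G'$ by deleting six vertices and adding two new edges. A direct check of conditions on the nine types shows $G'$ is not $3$-edge-colourable; yet $G'$ is almost bipartite, so Theorem~\ref{thm:albip} forces it to be $3$-edge-colourable---the contradiction. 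Your proposal contains no analogue of this construction and no colouring-type analysis; the sentence ``the principal obstacle will be verifying the global consistency of this colour propagation'' is exactly where the proof lives, and it is missing.
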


\begin{proof}
Let $G$ be a snark with defect $3$ containing a hexagonal core
$C$ intersected by a triangle $T$. By
Theorem~\ref{thm:decomp-def3}, $G$ admits a decomposition
$\{G_1,\dots,G_m\}$ into cyclically $4$-edge-connected cubic
graphs such that $G_1$ is a snark and each $G_i$ with $i\geq 2$
is $3$-edge colourable. Since the order of edge cuts taken in
the decomposition process is ir\-relevant, we can perform the
decomposition along $\delta_G(T)$ at the very last step,
producing a cyclically $4$-edge-connected snark $J'$ and a
graph $J''$ isomorphic to the complete graph~$K_4$ 
which contains~$T$. Lemma~\ref{lem:2+3-cuts} implies that throughout
the decomposition process the hexagonal core $C$ of $G$ is
being inherited to a smaller snark unless the cut $\delta_G(T)$ is
used. It follows that $J=J'\oplus_3 J''$ is a snark containing
a hexagonal core intersecting a triangle, both inherited from
$G$, such that $J/T$ is cyclically $4$-edge-connected. In this
situation Lemma~\ref{lem:2+3-sum-4cover} shows that it
sufficient to prove that $\pi(J)=4$. For the rest of the proof
we will therefore assume that $G=J$.

Let $\mathcal M=\{M_1,M_2,M_3\}$ be a $3$-array of perfect
matchings of $G$ whose core is~$C$, and let $\phi$ be the Fano
colouring of $G$ induced by $\mathcal{M}$.
Lemma~\ref{lem:essential}(ii) tells us that the intersection
$C\cap T$ consists of a single uncovered edge. Moreover, the
edge cut $\delta_G(T)$ comprises three pairwise independent
edges. Let $v$ be the vertex of $T$ not lying on~$C$, and let
$w$ be the neighbour of $v$ not belonging to $T$.

We claim that $w$ does not lie on $C$. If it does, then the
distance of $w$ from $T$ measured within $C$ is either $1$ or
$2$. In the former case, the edge cut $\delta_G(T)$ contains
two adjacent edges, which contradicts
Lemma~\ref{lem:essential}(ii), while in the latter case $w$ is
contained in a quadrilateral that violates
Lemma~\ref{lem:core_quadrangle}. It follows that $w$ does not
lie on $C$; the situation is illustrated in
Figure~\ref{fig:esstri}.

\begin{figure}[h!]
 \centering
 \includegraphics[scale=0.8]{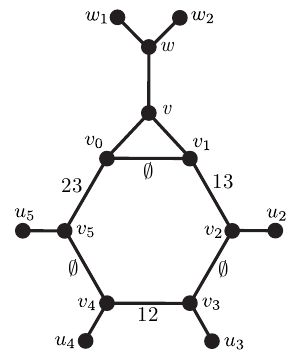}
 \caption{A hexagonal core intersecting a triangle.}
 \label{fig:esstri}
\end{figure}

We proceed to proving that $\pi(G)=4$. Assume to the contrary
that $\pi(G)\ge 5$. Take the subgraph $K$ of $G$ induced by
$C\cup T\cup\{w\}$, and set $H=G-V(K)$. Clearly, the edge cut
$R$ separating $K$ from $H$ comprises six edges, so we can
apply Theorem~\ref{thm:6cuts}. If $H$ had a perfect matching,
then the matching would extend to a perfect matching $M$ of $G$
that covers the three uncovered edges of the core and the
unique edge joining $w$ to~$T$. Consequently, $M$ together with
the elements of $\mathcal{M}$ would constitute a
perfect-matching $4$-cover of~$G$. Therefore $H$ has no perfect
matching. By Theorem~\ref{thm:6cuts}, $H$ contains an
independent set $S$ of $3$-valent vertices such that each
component $L$ of $H-S$ has $\delta_G(L)=3$. Since $G/T$ is
cyclically $4$-edge-connected, each component of $H-S$ is just
a single vertex. Therefore, $H$ is bipartite with bipartition
$\{S,V(H)-S\}$. The resulting structure of $G$ is illustrated
in  Figure~\ref{fig:Tutte}.

Let $\bar H$ denote the graph with six pendant vertices
obtained from $H\cup R$ by adding a pendant vertex to each edge
of $R$, and let $\bar K$ arise from $K\cup R$ similarly. We
analyse the $6$-edge-cut $R$ separating in $G$ the subgraph $K$
from the rest of~$G$. We enumerate the edges of $R$ as
$f_0,f_1,\ldots,f_5$, where the first two positions are
occupied by edges incident with $w$ and the positions of the
remaining four edges are taken by edges leaving $C$ listed in
the linear order determined by an orientation of $C$; see
Figure~\ref{fig:esstri}. Since the core of $\mathcal{M}$ is $C$
and $C\subseteq K$, we conclude that the graph $\bar H$ is
$3$-edge-colourable. Every $3$-edge-colouring $\sigma$ of $\bar
H$ induces a sequence of colours
$\alpha_{\sigma}=\sigma(f_0)\sigma(f_1)\ldots\sigma(f_5)$, the
\emph{colour vector} induced by $\sigma$. In general, we define
a \emph{colour vector} to be any sequence $\alpha=c_0c_1\ldots
c_5$ of colours $c_i\in\{1,2,3\}$ satisfying Parity Lemma; that
is, $c_0+\ldots+c_5=0$. By permuting the colours $1$, $2$, and
$3$ in $\alpha$ we obtain a new colour vector, nevertheless,
the difference between the two is insubstantial. Therefore each
of the six permutations of colours produces a sequence that
encodes the same colouring pattern. In order to have a
canonical representative, we choose from them the
lexicographically minimal sequence and call it the \emph{type}
of $\alpha$. A \emph{colouring type} is the type of some colour
vector~$\alpha$. The \emph{type} of a $3$-edge-colouring
$\sigma$ of $\bar H$ is the type of the induced colour vector
$\sigma(f_0)\sigma(f_1)\ldots\sigma(f_5)$. A colouring type is
said to be  \emph{admissible} for $\bar H$ if it is the type of
a certain $3$-edge-colouring of $\bar H$. We denote the set of
colouring types admissible for $\bar H$ by $\Col(\bar H)$, and
define $\Col(\bar K)$ analogously.

\medskip\noindent
Claim. \emph{Every colour vector
$\alpha\in\Col(\bar H)$ includes all three colours.}

\medskip\noindent
\emph{Proof of Claim.} Suppose that there exists a colour
vector $\gamma\in\Col(\bar H)$ that contains at most two
colours. Take any $3$-edge-colouring $\xi$ of $\bar H$ that
induces $\gamma$.  The missing colour class  forms a perfect
matching of $H$, which contradicts our assumption that $H$ has
no perfect matching. The claim is proved.

\medskip

Now we finish the proof. Parity Lemma implies that every
colouring type admissible for $\bar H$ contains all three
colours twice. It is easy to see that there exist exactly 15
colouring types containing all three colours (see
\cite[Table~1]{KMNS-Berge}). Direct verification shows that out
of them only six types are admissible for ~$\bar K$; namely,
$\Col(\bar K)=\{121233, 122133, 123123, \penalty0 123213,
123312, 123321\}$. Since $G$ is a snark, $\Col(\bar
H)\cap\Col(\bar K)=\emptyset$. It follows that $\Col(\bar H)$
is a subset of the set $\mathcal{T}$ formed by the remaining
nine types. These are displayed in Table~\ref{tbl:ctypes}.

\begin{table}[h!]
\centering
\begin{tabular}{ccccc}
\hline\hline
\rule{0pt}{11pt}$\alpha_1=112233$ & & $\alpha_4=121323$ & &
$\alpha_7=122331$\\
$\alpha_2=112323$ & & $\alpha_5=121332$ & & $\alpha_8=123132$\\$\alpha_3=112332$ & &  $\alpha_6=122313$ & & $\alpha_9=123231$\\\hline\hline
\end{tabular}
\medskip	
\caption{A set $\mathcal{T}$ of colouring types such that
$\Col(\bar H)\subseteq\mathcal{T}$} \label{tbl:ctypes}
\end{table}

We now construct a new cubic graph $G'$ from $G$ by removing
the vertices $v$, $v_0$, $v_1$, $v_3$, $v_4$, and $w$, and by
adding the path $w_2v_2v_5u_3$ and the edge $w_1u_4$, where
$w_1$ and $w_2$ are neighbours of $w$ different from $v$, see
Figure~\ref{fig:esstri_compl}.

\begin{figure}[h!]
 \centering
 \includegraphics[height=4.5cm]{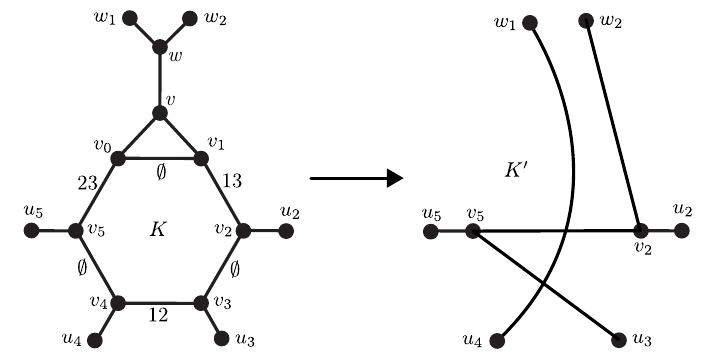}
 \caption{Constructing $G'$ from $G$.}
 \label{fig:esstri_compl}
\end{figure}

We proceed to proving that $G'$ is not $3$-edge-colourable. If
$G'$ had a $3$-edge-colouring, then such a colouring would
result from a $3$-edge-colouring $\sigma$ of $\bar H$ whose
colouring type $\alpha=c_0c_1\ldots c_5$ belongs to
$\mathcal{T}$ and satisfies all of the following conditions:
\begin{itemize}
 \item[(i)] \quad $c_0 = c_4$ \quad (the edge $f_0f_4$ is
     properly coloured),
 \item[(ii)] \quad $c_1\ne c_2$ \quad ($\sigma$ is proper
     at $s$), and
 \item[(iii)] \quad $c_3\ne c_5$ \quad ($\sigma$ is proper
     at $t$).
\end{itemize}
By checking the elements of $\mathcal{T}$ we see that all
colouring types except $\alpha_8$ violate (i), and $\alpha_8$
violates (iii). In other words, no element of $\mathcal{T}$
satisfies the conditions (i)-(iii), and therefore $G'$ admits
no $3$-edge-colouring. On the other hand, $G'$ is almost
bipartite with surplus edges $v_2v_5$ and $w_1u_4$, and by
Theorem~\ref{thm:albip} it is $3$-edge-colourable. This is the
final contradiction, which establishes the theorem.
\end{proof}

The following corollary is of particular importance.

\begin{corollary}\label{cor:esstrian}
The perfect matching index of a snark of defect $3$ with
an essential triangle equals $4$.
\end{corollary}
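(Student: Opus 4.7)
The plan is to observe that this corollary is essentially immediate once one combines Theorem~\ref{thm:triancore} with Lemma~\ref{lem:essential}(i) and the existence of a hexagonal core guaranteed by Theorem~\ref{thm:hexcore}.

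First, since $G$ is a snark, it admits no proper $3$-edge-colouring, so $\pi(G) \geq 4$ for trivial reasons. To obtain the matching upper bound, I would use that $\df{G} = 3$ together with Theorem~\ref{thm:hexcore} to fix an optimal $3$-array $\mathcal{M}$ of $G$ whose core $C$ is an induced $6$-cycle (a hexagonal core).

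Next, suppose $T$ is an essential triangle of $G$; by definition this means $\df{G/T}\ge 4$. I would apply Lemma~\ref{lem:essential}(i) in contrapositive form: since $T$ is essential, no hexagonal core of $G$ can be disjoint from $T$. In particular, the hexagonal core $C$ fixed above must intersect~$T$.

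Thus $G$ is a snark of defect $3$ containing a hexagonal core intersecting a triangle, so Theorem~\ref{thm:triancore} yields $\pi(G) \leq 4$. Combined with $\pi(G) \geq 4$, this gives $\pi(G) = 4$, as claimed. There is no real obstacle here since all heavy lifting has already been done in Theorem~\ref{thm:triancore}; the only point worth being careful about is the logical direction in invoking Lemma~\ref{lem:essential}(i), namely that we use the contrapositive to pass from essentiality of $T$ to the fact that every hexagonal core (and hence the one we chose) meets $T$.
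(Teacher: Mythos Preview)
Your proposal is correct and follows essentially the same route as the paper: invoke Lemma~\ref{lem:essential}(i) in contrapositive to see that the hexagonal core must meet the essential triangle, then apply Theorem~\ref{thm:triancore}. The only cosmetic difference is that Theorem~\ref{thm:triancore} already concludes $\pi(G)=4$ outright, so your separate observation that $\pi(G)\ge 4$ is not strictly needed.
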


\begin{proof}
If a cubic graph with defect $3$ contains an essential triangle
$T$, then, by Lemma~\ref{lem:essential}~(i), every hexagonal
core of $G$ intersects $T$. The conclusion now follows from
Theorem~\ref{thm:triancore}.
\end{proof}

\section{Snarks with defect 3 and perfect matching index 5}
\label{sec:pmi5}

\noindent{}The aim of this section is to characterise snarks
with defect $3$ that cannot be covered with four perfect
matchings. Our point of departure is the following theorem
proved in \cite[Theorems~1.1 and~1.2]{KMNS-Berge}.

\begin{theorem}\label{thm:pmi-c4c}
Every snark $G$ of defect $3$ satisfies Berge's conjecture,
that is, $\pi(G)\le 5$. Moreover, if $G$ is cyclically
$4$-edge-connected, then $\pi(G)=4$ unless $G$ is the Petersen
graph, which has $\pi(Pg)=5$.
\end{theorem}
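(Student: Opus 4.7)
The plan is to separate the two assertions and in the first reduction pull in the decomposition machinery, while the second (cyclically $4$-edge-connected) case requires a delicate hands-on analysis via the hexagonal core.

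First, I would reduce the general claim $\pi(G)\le 5$ to the cyclically $4$-edge-connected case. Given a snark $G$ with $\df{G}=3$, apply Theorem~\ref{thm:decomp-def3} to obtain a decomposition $\mathcal{D}=\{G_1,\ldots,G_m\}$ in which every $G_i$ with $i\ge 2$ is $3$-edge-colourable and $G_1$ carries the hexagonal core. By Lemma~\ref{lem:2+3-sum-4cover}, composing a graph admitting a perfect-matching $4$-cover with $3$-edge-colourable graphs via $2$-sums and $3$-sums preserves the existence of a perfect-matching $4$-cover. Hence a straightforward induction along the decomposition shows $\pi(G)\le\max\{\pi(G_1),4\}$; in particular $\pi(G)=\pi(G_1)$ whenever $\pi(G_1)\ge 4$. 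Thus it suffices to bound $\pi(G_1)$. If $G_1$ is not cyclically $4$-edge-connected, then by clause~(iii) of Theorem~\ref{thm:decomp-def3} it contains an essential triangle, and Corollary~\ref{cor:esstrian} yields $\pi(G_1)=4$. Otherwise $G_1$ is a cyclically $4$-edge-connected snark of defect~$3$, and both halves of the theorem reduce to the statement: such a $G_1$ has $\pi(G_1)=4$ unless it is the Petersen graph.

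To prove the cyclically $4$-edge-connected core claim, I would fix an optimal $3$-array $\mathcal{M}=\{M_1,M_2,M_3\}$ with hexagonal core $C$ and attempt to produce a fourth perfect matching $M_4$ covering the three uncovered edges of $C$; failure will pin $G$ down to the Petersen graph. Set $H=G-V(C)$, so that $|\delta_G(H)|=6$ by Theorem~\ref{thm:hexcore}. If $H$ possesses a perfect matching, Parity Lemma together with the colour pattern around $C$ shown in Figure~\ref{fig:core3} allows one to extend that matching across the six cut edges so that the resulting $M_4$ covers exactly the three uncovered edges of $C$; then $\{M_1,M_2,M_3,M_4\}$ is a perfect-matching $4$-cover. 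So one may assume $H$ has no perfect matching, and Theorem~\ref{thm:6cuts} applies: there is an independent set $S\subseteq V(H)$ of $3$-valent vertices with $\odd(H-S)=|S|+2$, each odd component $L$ having $|\delta_G(L)|=3$. Cyclic $4$-edge-connectivity of $G$ forces every such $L$ to be a single vertex, so $H$ itself is bipartite with parts $S$ and $V(H)\setminus S$.

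At this point I would use a colour-vector analysis in the spirit of the proof of Theorem~\ref{thm:triancore}. Label the six edges of $\delta_G(H)=R$ as $f_0,\ldots,f_5$ in the cyclic order inherited from $C$. The pendant $6$-pole $\bar H$ is bipartite, hence $3$-edge-colourable, and by Parity Lemma each admissible colouring type on $R$ uses all three colours exactly twice; moreover, the hypothesis that $H$ has no perfect matching rules out those types in which some colour is missing. One then enumerates the $15$ possible ``two-of-each'' colouring types, discards the six compatible with the core-side $6$-pole $\bar K$ (which would combine with $\bar H$ into a proper $3$-edge-colouring of $G$, impossible as $G$ is a snark), and works with the remaining nine types of $\mathcal{T}$. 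For each type one either constructs, using $\bar H$, a perfect-matching $4$-cover directly, or derives additional structural constraints on~$H$. Systematically ruling out the surviving possibilities will leave a uniquely determined bipartite $H$ on four vertices whose combination with $C$ and the $6$-edge-cut yields precisely the Petersen graph.

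The main obstacle is the last step: showing that the colour-type restrictions, together with cyclic $4$-edge-connectivity and the absence of a perfect matching of $H$, force $H$ to be a very small bipartite graph (in fact, a copy of $K_{2,2}$ arranged to meet $R$ so that the reconstruction is Petersen). The auxiliary graph construction of Theorem~\ref{thm:triancore}, where surplus edges are added to turn a $6$-pole into an almost bipartite cubic graph and Theorem~\ref{thm:albip} is invoked, is the key technique; here, however, several non-equivalent completions must be tried, because we do not yet have a triangle to anchor the construction, and one has to check that every completion either contradicts admissibility for $\bar H$ or certifies that $G$ is Petersen. Once this case analysis is carried out, both parts of Theorem~\ref{thm:pmi-c4c} follow.
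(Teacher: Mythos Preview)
The paper does not prove Theorem~\ref{thm:pmi-c4c}: it is quoted verbatim from the companion article \cite{KMNS-Berge} (``proved in \cite[Theorems~1.1 and~1.2]{KMNS-Berge}'') and serves here as an input to Section~\ref{sec:pmi5}, not as something to be established. So there is no in-paper proof to compare your attempt with.

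That said, your sketch is close in spirit to the argument of \cite{KMNS-Berge}, since the tools you invoke (Theorem~\ref{thm:6cuts}, Theorem~\ref{thm:albip}, the colour-vector bookkeeping) are precisely the ones imported from that paper. Two correctable slips in your reduction step: first, clause~(iii) of Theorem~\ref{thm:decomp-def3} gives a triangle $T$ that \emph{intersects the hexagonal core}, not an \emph{essential} triangle, so you should cite Theorem~\ref{thm:triancore} rather than Corollary~\ref{cor:esstrian}; second, Lemma~\ref{lem:2+3-sum-4cover} only propagates $\pi=4$, so to get $\pi(G)\le\max\{\pi(G_1),4\}$ when $G_1$ happens to be the Petersen graph you need the (easy, but unstated) analogue for $5$-covers.

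The genuine gap is exactly the one you flag yourself. In the cyclically $4$-edge-connected case, once $H=G-V(C)$ has no perfect matching and is forced to be bipartite, you still have to show that the nine surviving colour types of $\mathcal{T}$, together with the absence of a perfect matching in $H$, pin $H$ down to a single four-vertex graph and hence $G$ to $Pg$. In \cite{KMNS-Berge} this is a substantial case analysis: one must construct several distinct almost-bipartite completions (not just one, as in Theorem~\ref{thm:triancore}, where the triangle anchors the surplus edges) and argue that each either contradicts $\Col(\bar H)\subseteq\mathcal{T}$ via Theorem~\ref{thm:albip} or forces specific adjacencies in $H$. Your proposal correctly identifies the method but stops short of executing it; without that execution the argument is incomplete, and the execution is where essentially all the work of the original proof lies.
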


What remains to be done here is to provide a characterisation
of snarks with defect~$3$ and perfect matching index~$5$ whose
cyclic connectivity equals $2$ or $3$.

\medskip

The next lemma follows from straightforward parity arguments.

\begin{lemma}\label{lem:pmc-2+3cuts}
Let $G$ a $2$-connected cubic graph endowed with a
perfect-matching $4$-cover $\mathcal{P}=\{P_1,P_2,P_2,P_4\}$.
The following statements hold.
\begin{itemize}
\item[{\rm (i)}] If $R$ is a $2$-edge-cut of $G$, then
    either both edges of $R$ are simply covered or both are
    doubly covered. Moreover, they belong to the same
    members of $\mathcal{P}$ (either one or two).

\item[{\rm (ii)}] If $R$ is a $3$-edge-cut of $G$, then
    either one or all three edges are doubly covered. In
    the latter case, there is a perfect matching
    $P_j\in\mathcal{P}$ such that $R\subseteq P_j$.
\end{itemize}
\end{lemma}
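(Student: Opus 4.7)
The plan is to combine a vertex-coverage count with the standard parity constraint on how a perfect matching meets a small edge cut. The first preparatory observation is that no edge of $G$ can be covered three or four times by $\mathcal{P}$: at each vertex $v$ the four matchings contribute total coverage exactly $4$ distributed over the three incident edges, each already covered at least once, so the multiset of coverages at $v$ must be $\{2,1,1\}$. Hence every edge of $G$ has coverage $1$ or $2$, and at each vertex exactly one incident edge is doubly covered.

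Next I would invoke the identity $|V(H)|=|M\cap\delta_G(H)|+2|M\cap E(H)|$, valid for any perfect matching $M$ of a cubic graph and any vertex set $H$, which gives $|M\cap\delta_G(H)|\equiv|V(H)|\pmod 2$. Combined with the handshake relation $2|E(H)|+|\delta_G(H)|=3|V(H)|$, this forces $|V(H)|$ to be even when $|\delta_G(H)|=2$ and odd when $|\delta_G(H)|=3$. Consequently $|P_i\cap R|\in\{0,2\}$ in the first case and $|P_i\cap R|\in\{1,3\}$ in the second, for each $i\in\{1,2,3,4\}$.

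Part~(i) is then immediate: for a $2$-edge-cut $R=\{e,f\}$ each $P_i$ contains either both or neither of $e,f$, so the two edges lie in exactly the same members of $\mathcal{P}$ and carry identical coverage $k$, which by the first step is $1$ or $2$. For part~(ii) with $R=\{e,f,g\}$, I would let $k$ denote the number of matchings containing all of $R$ and run a short case analysis on $k$: the case $k\ge 2$ violates the coverage bound $\le 2$ established in the first step; $k=1$ forces the remaining three matchings to use each of $e$, $f$, $g$ exactly once, giving uniform coverage~$2$ and the witness $P_j\supseteq R$; and $k=0$ leaves four matchings each picking one edge of $R$, whose coverages must therefore form a permutation of $(2,1,1)$, giving exactly one doubly covered edge.

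The argument is essentially bookkeeping, with no serious obstacle. The only point that requires care is to rule out triply or quadruply covered edges via the vertex-count before feeding the parity conclusions into the case analysis; once this is done, the rest is mechanical arithmetic.
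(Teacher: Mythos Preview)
Your argument is correct and is precisely the kind of ``straightforward parity argument'' the paper alludes to in lieu of a proof; the preliminary observation that the doubly covered edges form a perfect matching is even stated explicitly in the paper just before the lemma. There is nothing to add.
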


We continue with a series of five lemmas preparing our main
result.

\begin{lemma}\label{lem:3cut4cover}
Let $G=H\oplus_3 K$ be a $3$-sum of $2$-connected cubic graphs
$H$ and $K$ such that $\pi(G)=4$. If $\pi(H)\geq 5$ or
$\pi(K)\geq 5$, then each edge of the principal $3$-edge-cut is
doubly covered.
\end{lemma}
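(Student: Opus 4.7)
The plan is to argue by contraposition: I would fix a perfect-matching $4$-cover $\mathcal{P}=\{P_1,P_2,P_3,P_4\}$ of $G$, assume that some edge of the principal $3$-edge-cut $R$ is \emph{not} doubly covered by $\mathcal{P}$, and then construct perfect-matching $4$-covers of both $H$ and $K$, contradicting $\pi(H)\ge 5$ or $\pi(K)\ge 5$. Write $R=\{e_1,e_2,e_3\}$ with $e_j=h_jk_j$, $h_j\in V(H)$, $k_j\in V(K)$, and let $u\in V(H)$, $v\in V(K)$ denote the distinguished vertices of the $3$-sum, so that $uh_j$ and $vk_j$ are the edges of $H$ and $K$ severed when forming $G$.

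First I would apply the Parity Lemma to the set $V(H)\setminus\{u\}$ inside $G$. Because $H$ is cubic, $|V(H)|$ is even and $|V(H)\setminus\{u\}|$ is odd, hence $|P_i\cap R|$ is odd, so lies in $\{1,3\}$. Combined with Lemma~\ref{lem:pmc-2+3cuts}(ii), the only alternative to ``all three edges of $R$ are doubly covered'' is ``exactly one edge doubly covered and two simply covered''. In that alternative the total coverage of $R$ equals $4$, which together with the $\{1,3\}$-constraint forces $|P_i\cap R|=1$ for every $i\in\{1,2,3,4\}$.

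The core construction is then transparent. For each $i$, let $e_{j(i)}$ denote the unique edge of $R$ lying in $P_i$, and set
$$Q_i^H=\bigl(P_i\cap E(H-u)\bigr)\cup\{uh_{j(i)}\}.$$
Because $P_i$ restricted to $V(H)\setminus\{u\}$ saturates every vertex internally except for $h_{j(i)}$, which is matched across the cut by $e_{j(i)}$, adjoining $uh_{j(i)}$ produces a perfect matching of $H$. The family $\{Q_1^H,\dots,Q_4^H\}$ covers $H$: every edge of $E(H-u)$ inherits its coverage from $\mathcal{P}$, and each edge $uh_j$ appears in exactly as many $Q_i^H$ as the cut edge $e_j$ appears in members of $\mathcal{P}$, and hence at least once. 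The symmetric recipe on the other side yields a perfect-matching $4$-cover of $K$, so $\pi(H)\le 4$ and $\pi(K)\le 4$, contradicting the assumption.

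The only real obstacle is bookkeeping: one must keep in view simultaneously the parity of $|P_i\cap R|$, the distribution of coverage over $R$ given by Lemma~\ref{lem:pmc-2+3cuts}(ii), and the resulting coverage of the edges $uh_j$, so as to pin down that the ``exactly one doubly covered'' configuration forces each $P_i$ to meet $R$ in exactly one edge. Once this case analysis is settled, the map $P_i\mapsto Q_i^H$ is canonical, both verifications (that each $Q_i^H$ is a perfect matching and that the family covers $H$) are short, and the contradiction follows immediately.
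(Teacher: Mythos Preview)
Your proposal is correct and follows essentially the same line as the paper's proof. The paper phrases the key step more tersely via contraction---observing that $G/K'\cong H$ and $G/H'\cong K$, so that in the ``exactly one doubly covered'' case the cover $\mathcal{P}$ descends directly to perfect-matching $4$-covers of $H$ and $K$---whereas you unpack this by explicitly writing $Q_i^H=(P_i\cap E(H-u))\cup\{uh_{j(i)}\}$ and verifying the matching and coverage properties; the two descriptions are the same construction.
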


\begin{proof}
Let $H'$ and $K'$ be the graphs obtained from $H$ and $K$ by
deleting their respective distinguished vertices, and let
$\mathcal{P}$ a perfect-matching $4$-cover of $G$. By
Lemma~\ref{lem:pmc-2+3cuts}~(ii), either one or all three edges
of the principal $3$-edge-cut are doubly covered. If the former
occurs, then $\mathcal{P}$ induces a perfect-matching $4$-cover
of each of $G/K'$ and $G/H'$. Since $G/K'$ and $G/H'$ are
isomorphic to $H$ and $K$, respectively, we conclude that
$\pi(H)=4=\pi(K)$. Thus if $\pi(H)\ge 5$ or $\pi(K)\ge 5$, then
all three edges of the principal edge-cut must be doubly
covered.
\end{proof}

\begin{lemma}\label{lem:2sum}
Let $G=H\oplus_2 K$ be a $2$-sum of $2$-connected cubic graphs
$H$ and $K$, where $K$ is $3$-edge-colourable. Then $\pi(G)\ge
5$ if and only if $\pi(H)\ge 5$.
\end{lemma}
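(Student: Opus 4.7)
The plan is to establish the two implications separately, leaning on the lemmas already at our disposal. The backward direction ($\pi(H)\le 4\Rightarrow \pi(G)\le 4$) splits into subcases according to whether $H$ is $3$-edge-colourable. If $\pi(H)=3$, then $H$ is $3$-edge-colourable, and since $K$ is too, a standard colour-alignment argument (pick colourings in which the distinguished edges $e$ and $f$ receive the same colour, then colour the two crossing edges with that colour) produces a proper $3$-edge-colouring of $G$, so $\pi(G)=3$. If $\pi(H)=4$, then Lemma~\ref{lem:2+3-sum-4cover} gives $\pi(G)=4$ directly. Either way $\pi(G)\le 4$.

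For the forward direction I argue contrapositively: starting from a perfect-matching $4$-cover $\mathcal{P}=\{P_1,P_2,P_3,P_4\}$ of $G$, I will manufacture one for $H$. Let $R=\{g,g'\}$ be the principal $2$-edge-cut of $G=H\oplus_2 K$ and let $e=uv$ be the distinguished edge of $H$ removed in the $2$-sum, so that $g,g'$ are incident with $u,v$ respectively. The crucial input is Lemma~\ref{lem:pmc-2+3cuts}(i), which forces $g$ and $g'$ to lie in exactly the same members of $\mathcal{P}$; call that common index set $J\subseteq\{1,2,3,4\}$, and note $J\neq\emptyset$ since $g$ is covered. I then define, for each $i$,
\[
  \tilde P_i=\begin{cases}(P_i\cap E(H-e))\cup\{e\}, & i\in J,\\ P_i\cap E(H-e), & i\notin J.\end{cases}
\]

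The verification that $\{\tilde P_1,\ldots,\tilde P_4\}$ is a perfect-matching $4$-cover of $H$ is a routine case check and is the second use of Lemma~\ref{lem:pmc-2+3cuts}(i). For any vertex of $H$ other than $u,v$, its incident $G$-edges coincide with its incident $H$-edges other than $e$, so restriction already provides a matching partner. When $i\in J$, both $u$ and $v$ are matched externally in $G$ via $g,g'$; the restriction covers $V(H)\setminus\{u,v\}$, and appending $e$ completes the matching. When $i\notin J$, neither $g$ nor $g'$ belongs to $P_i$, so $u$ and $v$ are forced to be matched inside $H-e$, and the restriction is already a perfect matching of $H$. Coverage of $H$ is then automatic: $e$ is covered by every $\tilde P_i$ with $i\in J$, and any edge of $E(H-e)$ is covered in $G$ by some $P_j$, hence by the corresponding $\tilde P_j$.

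The principal obstacle—really the critical input—is the parity statement in Lemma~\ref{lem:pmc-2+3cuts}(i). Without the guarantee that $g$ and $g'$ occur together in exactly the same members of $\mathcal{P}$, the naive restriction could leave some $\tilde P_i$ deficient at one of $u,v$ and surplus at the other, forcing a delicate swap argument between matchings to repair it. Once that parity constraint is in hand, the proof reduces to the clean case split above, and both directions together yield $\pi(G)\ge 5 \iff \pi(H)\ge 5$.
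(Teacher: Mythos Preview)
Your proof is correct and follows essentially the same approach as the paper: the backward direction invokes Lemma~\ref{lem:2+3-sum-4cover}, and the forward direction restricts a perfect-matching $4$-cover of $G$ across the principal $2$-edge-cut using Lemma~\ref{lem:pmc-2+3cuts}(i), reinserting the distinguished edge $e$ into exactly those matchings containing the cut edges. The only difference is cosmetic: you make the $\pi(H)=3$ subcase explicit and phrase the index set $J$ more generally, whereas the paper writes $\{i,j\}$ with possibly $i=j$; the constructions are identical.
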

	
\begin{proof}
By Lemma~\ref{lem:2+3-sum-4cover}, it suffices to prove that if
$\pi(H\oplus_2 K)\leq 4$, then $\pi(H)\leq 4$. Assume that $G$
has a perfect-matching $4$-cover
$\mathcal{P}=\{P_1,P_2,P_3,P_4\}$. By
Lemma~\ref{lem:pmc-2+3cuts}~(i), there exist
$i,j\in\{1,2,3,4\}$ such that such that both edges of the
principal $2$-edge-cut belong to $P_i\cap P_j$ (possibly
$i=j$). A perfect-matching $4$-cover
$\mathcal{P}'=\{P_1',P_2',P_3',P_4''\}$ of $H$ can now be
defined in a natural way: for $k\notin \{i,j\}$ we set
$P_k'=P_k\cap E(H)$ and for $k\in\{i,j\}$ we set $P_k'=P_k\cap
E(H)\cup\{e\}$, where $e$ is the distinguished edge of $H$ for
the $2$-sum.
\end{proof}

A vertex $v$ of a snark $G$ will be called an \emph{apex} if
the graph $G^v$ obtained by inflating $v$ into a triangle has
$\pi(G^v)=4$. This concept is useful especially if $\pi(G)\ge
5$. It is well known (see \cite[p.~146]{EM}) that every vertex
of the Petersen graph is an apex. By contrast, the windmill
snark of order~34 (see \cite[Figure~7]{EM}) has exactly three
vertices that are not apexes, the neighbours of the central
vertex.

\begin{lemma}\label{lem:apex}
In a snark $G$ with $\pi(G)\ge 5$, no apex is incident with a
pair of parallel edges.
\end{lemma}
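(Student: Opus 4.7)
The plan is to obtain a contradiction from the hypothesis that $v$ is an apex of $G$ incident with a pair of parallel edges. The strategy is to exhibit both $G$ and $G^v$ as $2$-sums of a common cubic graph $G^{\#}$ with a $3$-edge-colourable graph — the $3$-dipole $D_3$ in the case of $G$, and the complete graph $K_4$ in the case of $G^v$. Lemma~\ref{lem:2sum} applied to the two decompositions then yields the chain of equivalences $\pi(G)\ge 5\Leftrightarrow \pi(G^{\#})\ge 5\Leftrightarrow \pi(G^v)\ge 5$, contradicting the coexistence of $\pi(G)\ge 5$ with $\pi(G^v)=4$ (the latter being forced by the apex property of $v$).

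For the first decomposition, let $e_1$ and $e_2$ be the parallel edges joining $v$ to some vertex $u$, let $e_3=vw$ be the third edge at $v$, and let $uu_1$ be the third edge at $u$. Then $\{u,v\}$ is separated from the rest of $G$ by the $2$-edge-cut $\{uu_1,e_3\}$. Decomposing $G$ along this cut produces the $3$-dipole $D_3$ (the digon between $u$ and $v$ completed by an added edge $uv$) on one side, and a cubic graph $G^{\#}$ on the other, obtained from $G$ by deleting $u$ and $v$ and inserting a new edge joining $u_1$ to $w$. In particular, $G=D_3\oplus_2 G^{\#}$, and since $D_3$ is $3$-edge-colourable, Lemma~\ref{lem:2sum} gives $\pi(G)\ge 5\Leftrightarrow \pi(G^{\#})\ge 5$.

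For the second decomposition, inflate $v$ into a triangle $T=v_1v_2v_3$ in such a way that $v_1,v_2$ inherit the ends of the two parallel edges to $u$ and $v_3$ inherits the end of $e_3$. Observe that the set $H=\{u,v_1,v_2,v_3\}$ induces in $G^v$ a copy of $K_4$ with the edge $uv_3$ missing, and that $\delta_{G^v}(H)=\{uu_1,v_3w\}$ is a $2$-edge-cut. Decomposing $G^v$ along this cut completes the $H$-side to $K_4$ by adding the missing edge $uv_3$, while the other side is exactly the same graph $G^{\#}$ as in the first decomposition — the two new $2$-valent vertices produced on the outside are again $u_1$ and $w$, joined by a new edge. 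Hence $G^v=K_4\oplus_2 G^{\#}$, and a second application of Lemma~\ref{lem:2sum} yields $\pi(G^v)\ge 5\Leftrightarrow \pi(G^{\#})\ge 5$.

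Combining the two equivalences one obtains $\pi(G)\ge 5\Leftrightarrow \pi(G^v)\ge 5$, in direct contradiction with $\pi(G)\ge 5$ and $\pi(G^v)=4$. The main point requiring care is to verify that the graph $G^{\#}$ produced in the two decompositions is literally the same, which reduces to observing that in both cases the $2$-valent vertices on the outside of the cut are precisely $u_1$ and $w$, and the added edge $u_1w$ is the same. A brief check disposes of the potentially degenerate configuration $u_1=w$: then $G^{\#}$ acquires a loop at $u_1$, but neither the $2$-sum structure nor the application of Lemma~\ref{lem:2sum} is affected.
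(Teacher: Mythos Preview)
Your proof is correct and follows essentially the same geometric observation as the paper: the $2$-edge-cut $\{uu_1,\,v_3w\}$ in $G^v$ separates a copy of $K_4-e$ from the rest. The paper handles this a shade more directly, invoking Lemma~\ref{lem:pmc-2+3cuts}(i) to transport a perfect-matching $4$-cover of $G^v$ across the cut to a $4$-cover of $G$, whereas you package the same idea at a higher level by writing both $G=D_3\oplus_2 G^{\#}$ and $G^v=K_4\oplus_2 G^{\#}$ and applying Lemma~\ref{lem:2sum} twice. Your formulation has the pleasant side effect of making explicit why $G$ and $G^v$ behave identically with respect to $\pi$: they differ only in the $3$-edge-colourable summand.

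One small correction: the ``degenerate'' case $u_1=w$ cannot occur and should be dismissed rather than accommodated. If $u_1=w$, then $\{u,v,u_1\}$ would be attached to the rest of $G$ by a single edge, giving a bridge and contradicting $2$-connectivity of the snark $G$. Your suggestion that a loop in $G^{\#}$ would be harmless is not quite right, since Lemma~\ref{lem:2sum} is stated for $2$-connected summands; but since the case is vacuous, this does not affect the validity of the argument.
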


\begin{proof}
If a vertex $v$ of a snark $G$ is incident with a pair of
parallel edges, then $G^v$ has a $2$-edge-cut whose removal
yields a component consisting of two triangles sharing an edge.
Using Lemma~\ref{lem:pmc-2+3cuts}~(i) it is easy to see that
every perfect-matching $4$-cover of $G^v$ gives rise to a
perfect-matching $4$-cover of $G$. Hence, $\pi(G)=4$ and $v$ is
not an apex.
\end{proof}

\begin{lemma}\label{lem:pmi4sum3gen}
Let $G=H\oplus K$ be a $2$-sum or a $3$-sum of $2$-connected
cubic graphs $H$ and $K$ where $H$ is a snark with $\pi(H)\ge
5$ and $K$ is $3$-edge-colourable. If $u$ is an apex of $H$
different from the distinguished vertex for the $3$-sum, then
$u$ is an apex of $G$.
\end{lemma}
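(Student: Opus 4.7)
The plan is to write $G^u$ as $H^u \oplus K$ (with the same sum operation and the same distinguished data as in $G = H \oplus K$) and then apply Lemma~\ref{lem:2+3-sum-4cover}. So first I would verify the commutation $G^u = H^u \oplus K$. In the $3$-sum case, the hypothesis that $u$ is distinct from the distinguished vertex of $H$ is exactly what is needed: it guarantees that $u$ survives as a vertex of $G$, so that inflating $u$ in $G$ on the one hand, and inflating $u$ in $H$ before taking the $3$-sum on the other, produce isomorphic graphs with matching distinguished vertex in $H^u$. In the $2$-sum case no such restriction is necessary; even if $u$ is an endpoint of the distinguished edge of $H$, that edge persists (now incident to one specific vertex of the triangle replacing $u$) in $H^u$, and a direct inspection will show $G^u = H^u \oplus_2 K$.

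Next, I would check that Lemma~\ref{lem:2+3-sum-4cover} applies to the pair $H^u, K$. The graph $K$ is $2$-connected, cubic, and $3$-edge-colourable by hypothesis. The graph $H^u$ is cubic, and it is $2$-connected because $H$ is and because, by Lemma~\ref{lem:apex}, the apex $u$ of $H$ is not incident with a pair of parallel edges; hence inflating $u$ introduces no cut vertex and the resulting triangle is attached to the rest of $H^u$ via three edges going to three distinct vertices. Since $u$ is an apex of $H$, we have $\pi(H^u) = 4$ by the definition of an apex.

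Lemma~\ref{lem:2+3-sum-4cover} then yields $\pi(H^u \oplus K) = 4$, and combining this with the identification $G^u = H^u \oplus K$ gives $\pi(G^u) = 4$. Hence $u$ is an apex of $G$, which is what we wanted.

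The only piece requiring real care is the commutation $G^u = H^u \oplus K$, which amounts to tracking how the three edges incident to $u$ are redistributed among the three vertices of the triangle replacing $u$ (and, in the $2$-sum case when $u$ lies on the distinguished edge, how this interacts with the edge replacement performed by the sum). I expect this to be a routine bookkeeping exercise rather than a conceptual obstacle; the remaining steps are immediate invocations of Lemma~\ref{lem:apex} and Lemma~\ref{lem:2+3-sum-4cover}.
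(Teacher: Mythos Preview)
Your proposal is correct and follows essentially the same approach as the paper: identify $G^u$ with $H^u\oplus K$ and then invoke Lemma~\ref{lem:2+3-sum-4cover}. The paper's proof is a terse three-liner that takes the commutation $G^u=H^u\oplus K$ and the $2$-connectedness of $H^u$ for granted; your version is more explicit about these points (including the edge case where $u$ lies on the distinguished edge of a $2$-sum, and the appeal to Lemma~\ref{lem:apex} for $2$-connectedness), but the argument is the same.
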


\begin{proof}
If $u$ is an apex of $H$, then $\pi(H^u)=4$. The graph $K$ is
$3$-edge-colourable and $u$ is not a distinguished vertex for
the sum, so $\pi(H^u\oplus K)=4$ by
Lemma~\ref{lem:2+3-sum-4cover}. Since $\pi(H^u\oplus K)=G^u$,
we conclude that $u$ is an apex of $G$.
\end{proof}

A $2$-connected cubic graph $K$ will be called
\emph{quasi-bipartite} if it contains an independent set of
vertices $U$ with $|U|\ge 2$ such that the graph $\tilde
K=K/(K-U)$ obtained by the contraction of each component of
$K-U$ to a vertex is a bipartite cubic graph with partite sets
$U$ and $V(\tilde K)-U$; the graph $\tilde K$ may not be
simple. The independent set $U$ is a \emph{bipartising set} of
$K$ and the set of components of $K-U$ is a \emph{quasi-partite
set} of~$K$. Clearly, the quasi-partite set and the bipartising
set have the same size.

\begin{lemma}\label{lem:claw}
Let $G$ be a bridgeless cubic graph and let $H\subseteq G$ be
an induced subgraph with $|\delta_G(H)|=6$. If $H$ has no
perfect matching, then every cubic graph obtained from $H$ by
adding a $3$-star $Y$ to $H$ and attaching the edges of
$\delta_G(H)$ to the leaves of $Y$ is quasi-bipartite.
Moreover, the central vertex of $Y$ forms a trivial component
of the corresponding quasi-partite set of $K$.
\end{lemma}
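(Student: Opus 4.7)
The plan is to apply Theorem~\ref{thm:6cuts} to $H$, which produces an independent set $S\subseteq V(H)$ of vertices that are $3$-valent in $H$ and such that $H-S$ has exactly $|S|+2$ components, each joined to the rest of $G$ by exactly three edges. Denoting the centre of $Y$ by $y$ and its leaves by $y_1,y_2,y_3$, I propose $U:=S\cup\{y_1,y_2,y_3\}$ as a bipartising set of $K$; the trivial component of the associated quasi-partite set will then be $\{y\}$, since all three neighbours of $y$ in $K$ lie in $U$.

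First I would verify that $U$ is independent in $K$. The set $S$ is independent in $H$ by Theorem~\ref{thm:6cuts}, and the leaves $y_1,y_2,y_3$ are pairwise non-adjacent because their only incidences in $K$ are the single star-edges to $y$ and two edges of $\delta_G(H)$ apiece. The critical observation is that no edge of $\delta_G(H)$ is incident with $S$: by item~(i) of Theorem~\ref{thm:6cuts}, every vertex of $S$ is $3$-valent inside $H$, hence has no edges leaving $H$. Consequently, no $s\in S$ is adjacent to any $y_i$, and moreover the two non-$Y$-edges at each $y_i$ necessarily end in $V(H)-S$.

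Next I would describe $K-U$ and check that $\tilde K := K/(K-U)$ is a bipartite cubic graph with partite sets $U$ and $V(\tilde K)-U$. The vertex set of $K-U$ is $(V(H)-S)\cup\{y\}$, and $y$ is isolated there because all of its neighbours lie in $U$; the remaining components of $K-U$ are precisely the $|S|+2$ components of $H-S$, so the quasi-partite set consists of $|S|+3$ elements, matching $|U|$. Bipartiteness of $\tilde K$ is automatic from the independence of $U$ and the contraction of each component of $K-U$ to a single vertex. For cubicity, each $s\in S$ contributes three edges into $V(H)-S$, each $y_i$ contributes one edge to $y$ and two edges to $V(H)-S$, the vertex $y$ contributes three star-edges, and every contracted component $L$ of $H-S$ has $|\delta_G(L)|=3$ edges leaving it by item~(iii) of Theorem~\ref{thm:6cuts}. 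Since $|U|=|S|+3\ge 2$ and $\{y\}$ is a trivial component of the quasi-partite set, both conclusions follow. The only delicate step is deducing from Theorem~\ref{thm:6cuts}(i) that $\delta_G(H)$ avoids $S$, which simultaneously secures independence of $U$ and proper targeting of the leaf-edges; the rest is a routine degree count against items~(ii) and~(iii) and is independent of how the six edges of $\delta_G(H)$ are distributed among the three leaves.
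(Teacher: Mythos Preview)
Your proof is correct and follows essentially the same approach as the paper: both apply Theorem~\ref{thm:6cuts} to obtain the independent set $S$ and then take $U=S\cup\{\text{leaves of }Y\}$ as the bipartising set, with the centre of $Y$ forming the trivial quasi-partite component. Your write-up is in fact more explicit than the paper's, spelling out why no vertex of $S$ is adjacent to a leaf (via item~(i) of Theorem~\ref{thm:6cuts}) and carrying out the degree count in detail, whereas the paper leaves these verifications implicit.
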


\begin{proof}
Take a $3$-star $Y$ disjoint from $H$ with leaves $w_1$, $w_2$,
$w_3$, and central vertex $z$. Let $H^{\sharp}$ be a cubic
graph obtained by adding $Y$ to $H$ and attaching the edges of
$\delta_G(H)$ to the leaves of~$Y$. From
Theorem~\ref{thm:6cuts} we know that $H$ contains an
independent set $S$ of $3$-valent vertices such that each
component $L$ of $H-S$ has $\delta_G(L)=3$ and the number of
components of $H-S$ is $|S|+2$. Let $\{L_1,\ldots,L_m\}$ be the
set of components of $H-S$; so $|S|=m-2$. Set
$U=S\cup\{w_1,w_2,w_3\}$. Then $U$ is an independent subset of
$3$-valent vertices of $H^\sharp$ such that $L_1,\ldots,L_m$
and $\{z\}$ are the components of $H^{\sharp}-U$. Due to the
construction of $H^\sharp$, each component $L$ of $H^{\sharp}
-U$ has $\delta_{H^\sharp}(L)=3$, so
$H^{\sharp}/(H^{\sharp}-U)$ is a bipartite cubic graph. Hence,
$H^{\sharp}$ is quasi-bipartite.
\end{proof}

The following two theorems are the last ingredients needed for
our main result.

\begin{theorem}\label{thm:3-sumpmi5}
Let $H$ and $K$ be $2$-connected cubic graphs, where $H$ is a
snark with $\pi(H)\ge 5$ and $K$ is $3$-edge-colourable
quasi-bipartite graph. If $G=H\oplus_3 K$ is a $3$-sum with
distinguished vertices $u$ and $v$, where
$v$ forms a trivial component of the quasi-partite set of $K$,
then the following hold:
\begin{itemize}
\item[{\rm (i)}] $\pi(G)\ge 5$.

\item[{\rm (ii)}] $G$ is quasi-bipartite with bipartising
    set inherited from $K$, and $H-u$ is an element of the
    quasi-partite set of $G$ replacing $v$.
\end{itemize}
\end{theorem}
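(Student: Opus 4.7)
The plan is to settle (ii) first as a structural observation, and then to derive (i) from a Tutte-type deficiency inside $K-N[v]$ combined with the cover analysis of Lemmas~\ref{lem:3cut4cover} and~\ref{lem:pmc-2+3cuts}. For (ii), let $U$ be the bipartising set of $K$ witnessing that $\{v\}$ is a trivial component of the quasi-partite set. Then the three neighbours $w_1,w_2,w_3$ of $v$ in $K$ all lie in $U$. In $G=H\oplus_3 K$ the set $U$ remains independent, and the three cut edges $e_1,e_2,e_3$ of the principal $3$-edge-cut $R$ join $H-u$ (which is connected because $H$ is $2$-connected) to $\{w_1,w_2,w_3\}$. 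Hence the components of $G-U$ are precisely the components of $K-U$ with $\{v\}$ replaced by $H-u$, and each component still has a $3$-edge cut to $U$. Consequently $\tilde G=G/(G-U)$ is a cubic bipartite multigraph with partite sets $U$ and $V(\tilde G)-U$, which establishes (ii).

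For (i), the central auxiliary claim is that $K-N[v]$ has no perfect matching. Put $S'=U\setminus\{w_1,w_2,w_3\}$. Then $(K-N[v])-S'=(K-U)-\{v\}$ consists of the $|U|-1$ non-$\{v\}$ components of $K-U$. Every component $C$ of $K-U$ satisfies $|\delta_K(C)|=3$ by cubicity of $\tilde K$, and the handshake identity $3|C|=2|E(K[C])|+3$ then forces $|C|$ to be odd. Thus $\odd((K-N[v])-S')=|U|-1$, while $|S'|\le |U|-2$ (note that a triple edge at $v$ would force $K$ to be the dipole $D_3$, which is not quasi-bipartite). Hence $\odd((K-N[v])-S')>|S'|$, and Tutte's theorem excludes a perfect matching of $K-N[v]$.

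Now assume towards a contradiction that $\pi(G)\le 4$ and fix a perfect-matching $4$-cover $\mathcal{P}=\{P_1,P_2,P_3,P_4\}$ of $G$. Since any $3$-edge-colouring of $G$ would induce one on the snark $H$, we have $\pi(G)=4$. Lemma~\ref{lem:3cut4cover}, together with $\pi(H)\ge 5$, then forces every edge of $R$ to be doubly covered, and Lemma~\ref{lem:pmc-2+3cuts}(ii) places the whole of $R$ inside a single $P_j\in\mathcal{P}$. Under $P_j$ each $w_i$ is matched externally to the neighbour $x_i\in V(H-u)$ of $u$ along the cut edge $e_i$. Consequently, for every $y\in V(K)-N[v]$ the $P_j$-mate of $y$ must lie in $V(K)-N[v]$ itself: the three cut edges are the only $G$-edges between $K$ and $H-u$ and they are already used, while the vertices $w_1,w_2,w_3$ are already matched by $P_j$. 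Therefore $P_j\cap E(K-N[v])$ is a perfect matching of $K-N[v]$, contradicting the claim of the preceding paragraph. The main obstacle is locating the right Tutte witness $S'$; once the trivial-component role of $v$ is exploited, the subsequent bookkeeping with the doubly-covered cut $R$ is routine.
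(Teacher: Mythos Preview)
Your proof is correct and rests on the same underlying idea as the paper's: the quasi-bipartite structure forbids any perfect matching of $G$ from containing the whole principal cut $R$. The paper's execution is, however, more direct. Rather than first isolating a Tutte obstruction in $K-N[v]$ and then invoking Lemma~\ref{lem:pmc-2+3cuts}(ii) to single out a specific $P_j\supseteq R$, the paper works with the perfect matching $M$ consisting of \emph{all} doubly covered edges (which already contains $R$ by Lemma~\ref{lem:3cut4cover} alone) and simply counts $|M\cap\delta_G(U)|$ two ways: it equals $|U|$ because $U$ is independent, yet it is at least $|U|+2$ because the component $H-u$ contributes three edges while every other component of $G-U$, being odd, contributes at least one. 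Your Tutte deficiency $\odd\bigl((K-N[v])-S'\bigr)>|S'|$ is precisely this inequality, localised to $K$ and phrased through Tutte's condition rather than a direct double count; the extra detour through $K-N[v]$ and $P_j$ buys nothing beyond what the global counting already gives.
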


\begin{proof}
To prove (i) suppose to the contrary that $G$ has a
perfect-matching $4$-cover, and let $|U|=n$. Since $\pi(H)\geq
5$, Lemma~\ref{lem:3cut4cover} tells us that all edges of the
principal $3$-edge-cut $R$ of $G$ are doubly covered. Recall
that the set $M$ of all doubly covered edges of $G$ forms a
perfect matching. Since $R=\delta_G(H-u)$ and $R\subseteq M$,
we conclude that $H-u$ adds three edges of $M$ to the matching
$M\cap\delta_G(U)$. At the same time, each other component of
$G-U$ contributes to $M\cap\delta_G(U)$ by at least one edge.
The number of components of $G-U$ equals $n$, therefore
$|M\cap\delta_G(U)|\ge n+2$. On the other hand, $U$ is an
independent set, so $|M\cap\delta_G(U)|=n$. This contradiction
proves (i). Statement (ii) is obvious.
\end{proof}

The next theorem is a partial converse of
Theorem~\ref{thm:3-sumpmi5}.

\begin{theorem}\label{thm:3-sumpmi5-converse}
Let $G=H\oplus_3 K$ be a proper $3$-sum of $2$-connected cubic
graphs where $\pi(H)\ge 5$, $K$ is $3$-edge-colourable, and the
distinguished vertex of $H$ is an apex. If $\pi(G)\ge 5$, then
$K$ is quasi-bipartite and its distinguished vertex forms a
trivial component of the quasi-partite set of $K$.
\end{theorem}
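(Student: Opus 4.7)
The plan is to suppose $\pi(G)\ge 5$ and force the quasi-bipartite structure of $K$ via a suitable application of Theorem~\ref{thm:6cuts}. Let $x_1,x_2,x_3$ and $y_1,y_2,y_3$ denote the neighbours of $u$ in $H$ and of $v$ in $K$ respectively, so that the principal $3$-edge-cut of $G$ is $\{x_iy_i : i=1,2,3\}$; let $\{N_1,N_2,N_3\}$ be a $3$-edge-colouring of $K$ with $f_j:=vy_j\in N_j$.

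The first step would be to establish that $\{y_1,y_2,y_3\}$ is an independent set in $K$. If for example $y_2y_3\in E(K)$, then $T=\{v,y_2,y_3\}$ induces a triangle in $K$ and the quotient $K/T$ inherits a $3$-edge-colouring from $K$. Examining the relevant $3$-cut of $G$ obtained by replacing $vy_1\in\delta_K(T)$ with $x_1y_1$, the $H$-side completes to $H^u$ (with the completion vertex $w$ joined to $x_1,y_2,y_3$ playing, together with $y_2,y_3$, the role of the inflated triangle), while the complementary side equals $K/T$. Consequently $G\cong H^u\oplus_3(K/T)$, and Lemma~\ref{lem:2+3-sum-4cover} yields $\pi(G)=4$, a contradiction. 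The configurations with two or three adjacencies among $y_1,y_2,y_3$ reduce analogously: two adjacencies produce a decomposition $G\cong H^u\oplus_2 L$ with $L$ a $3$-edge-colourable $2$-sum factor of $K$, and three adjacencies force $K=K_4$ and hence $G=H^u$ outright.

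With $\{y_1,y_2,y_3\}$ independent, the subgraph $Y$ of $G$ induced by $V(K)-\{v,y_1,y_2,y_3\}$ satisfies $|\delta_G(Y)|=6$, and Theorem~\ref{thm:6cuts} leaves two alternatives. Suppose first that $Y$ has a perfect matching $M_Y$. Since $u$ is an apex, $\pi(H^u)=4$; together with $\pi(H)\ge 5$, Lemma~\ref{lem:3cut4cover} applied to $H^u=H\oplus_3 K_4$ forces every $4$-cover $\mathcal{Q}=\{Q_1,\ldots,Q_4\}$ of $H^u$ to doubly cover all three edges $e_j=ux_j$. At each vertex of the inflated triangle in $H^u$ exactly one $Q_i$-edge is incident, so parity forces $|S_i|\in\{1,3\}$ for the signatures $S_i=\{j : e_j\in Q_i\}$, and with $\sum|S_i|=6$ the multiset must be $(3,1,1,1)$; after relabelling take $S_1=\{1,2,3\}$ and $S_{j+1}=\{j\}$. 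Writing $T_i=Q_i\cap E(H-u)$, the four sets
\[
P_1 = T_1\cup M_Y\cup\{x_1y_1,x_2y_2,x_3y_3\},\qquad P_{j+1}=T_{j+1}\cup(N_j\setminus\{f_j\})\cup\{x_jy_j\}
\]
are perfect matchings of $G$ whose union covers $E(H-u)$ (because $\mathcal{Q}$ covers $E(H^u)$), covers $E(K-v)$ (because $\{N_1,N_2,N_3\}$ partitions $E(K)$), and covers the three $3$-sum edges via $P_1$; this contradicts $\pi(G)\ge 5$.

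Thus the second alternative of Theorem~\ref{thm:6cuts} must hold, providing an independent set $S$ of vertices $3$-valent in $Y$ with $\odd(Y-S)=|S|+2$ and $|\delta_G(L)|=3$ for every component $L$ of $Y-S$. I would verify that $U=S\cup\{y_1,y_2,y_3\}$ is the required bipartising set: independence of $U$ in $K$ combines independence of $S$, independence of $\{y_1,y_2,y_3\}$, and the fact that $3$-valence in $Y$ leaves no $S$-to-$y_i$ edges in $K$; removing $\{y_1,y_2,y_3\}$ from $K$ isolates $v$, so $K-U=\{v\}\cup(Y-S)$ has exactly $1+(|S|+2)=|U|$ components, each meeting $K$ in a $3$-edge-cut (trivially for $\{v\}$, and because $Y$-components have no $K$-edges reaching $H$ for the remainder); and $K/(K-U)$ is cubic and bipartite with partite sets $U$ and the components, so $\{v\}$ is the desired trivial component. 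I expect the main obstacle to be the first step: each adjacency pattern among the $y_i$ must be recognised as hiding a copy of $H^u$ on one side of a small-cut decomposition of $G$, with the complementary side $3$-edge-colourable, so that Lemma~\ref{lem:2+3-sum-4cover} triggers the decisive reduction $\pi(G)=4$.
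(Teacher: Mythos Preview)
Your argument is correct and matches the paper's approach: both show that $K-\{v,y_1,y_2,y_3\}$ has no perfect matching (by assembling a $4$-cover of $G$ from a $4$-cover of $H^u$, a $3$-colouring of $K$, and a hypothetical such matching), and then invoke Theorem~\ref{thm:6cuts} to extract the quasi-bipartite structure (the paper packages this last step as Lemma~\ref{lem:claw}). Your opening reduction establishing that $\{y_1,y_2,y_3\}$ is independent is additional care that the paper omits---it simply asserts $|\delta_G(K^\flat)|=6$ without comment---so your write-up is, if anything, more complete on this point.
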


\begin{proof}
The principal $3$-edge-cut $R=\{e_1,e_2,e_3\}$ of $G$ consists
of three independent edges because the $3$-sum is proper. In
particular, the neighbours $v_1$, $v_2$, and $v_3$ of the
distinguished vertex $v$ of $K$ are pairwise distinct. Consider
the graph $K^{\flat}=K-\{v,v_1,v_2,v_3\}$ obtained by removing
from~$K$ the $3$-star induced by $v$ and its neighbours.
Observe that $|\delta_G(K^{\flat})|=6$, so we can apply
Theorem~\ref{thm:6cuts}. We claim that $K^{\flat}$ has no
perfect matching. Suppose to the contrary that $K^{\flat}$ has
a perfect matching which we denote by $M_4$. Since
$\pi(H^u)=4$, there is a perfect-matching $4$-cover
$\mathcal{P}=\{P_1,P_2,P_3,P_4\}$ of $H^u$. Note that
$\delta_{H^u}(H-u)=\delta_G(H-u)=R$. By
Lemma~\ref{lem:3cut4cover}, all the edges of $R$ are doubly
covered because $\pi(H)\ge 5$. Moreover,
Lemma~\ref{lem:pmc-2+3cuts}~(ii) tells us that there exist an
index $j\in\{1,2,3,4\}$ such that $R\subseteq P_j$. Without
loss of generality we may assume that $j=4$ and $e_i\in P_i\cap
P_4$ for each $i\in\{1,2,3\}$. By our assumption, $K$ has a
$3$-edge-colouring; let $M_1$, $M_2$, and $M_3$ be its colour
classes such that $e_i\in M_i$ for each $i\in\{1,2,3\}$. If we
set $Q_i=P_i\cup M_i$ for $i\in\{1,2,3,4\}$, we can see that
$\{Q_1,Q_2,Q_3,Q_4\}$ is a perfect-matching $4$-cover of $G$,
and we have a contradiction. Therefore $K^{\flat}$ has no
perfect matching, and Lemma~\ref{lem:claw} now implies that $K$
is quasi-bipartite with $v$ forming a trivial component of the
quasi-partite set of $K$.
\end{proof}

The following theorem characterising cubic graphs of colouring
defect $3$ whose perfect matching index equals $5$ is one of
the main results of this paper. For its formulation it is
convenient to regard a $2$-sum $H\oplus_2 K$ or a $3$-sum
$H\oplus_3 K$ as a modification of the graph $H$ by a graph
$K$. To be precise, consider a $2$-connected cubic graph $H$
and vertex $u$ and an edge $e$ of $G$. We will say that a cubic
graph $G$ arises from $H$ by an \emph{insertion} of an
edge-deleted cubic graph into $e$ if there exists a
$2$-connected cubic graph $K$ and an edge $f$ of $K$ such that
$G=H\oplus_2 K$ with distinguished edges $e$ and $f$,
respectively. Specifically, it is the insertion of the
edge-deleted graph $K-f$ into $e$. We will further say that $G$
arises from $H$ by a \emph{substitution} of the vertex $u$ with
a vertex-deleted cubic graph if there exists a $2$-connected
cubic graph $K$ and a vertex $v$ of $K$ such that $G=H\oplus_3
K$ is a $3$-sum with distinguished vertices $u$ and $v$,
respectively. Specifically, it is a substitution of $u$ with
$K-v$. For example, inflating a vertex $u$ in a cubic graph to
a triangle is nothing but a substitution of $u$ with $K_4-v$,
where $K_4$ is the complete graph on four vertices.

We conclude this series of definitions with the following. A
substitution of a vertex $u$ of a cubic graph $H$ with a
vertex-deleted quasi-bipartite cubic graph $K-v$ is said to be
\emph{correct} if $K$ has a bipartising set $U$ such that $v$
constitutes a component of the graph $K-U$; that is, if $\{v\}$
belongs to the corresponding quasi-partite set of $K$.

In our characterisation theorem, insertions and substitutions
will be performed with $3$-edge-colourable graphs. In this
context it may be useful to keep in mind that an edge-deleted
or a vertex-deleted cubic graph is $3$-edge-colourable if and
only the original cubic graph is.

\begin{theorem}\label{thm:main}
Every $2$-connected cubic graph with colouring defect $3$ and
perfect matching index $5$ is either isomorphic to the Petersen
graph $Pg$ or arises from the Petersen graph with a fixed
$6$-cycle $C$ by combining the following two operations:
\begin{itemize}
\item[{\rm (O1)}] insertion of connected
    $3$-edge-colourable edge-deleted cubic graphs into any
    number of edges from $E(Pg)-E(C)$; and
\item[{\rm (O2)}] correct substitution of any number of
    vertices from $V(Pg)-V(C)$ with connected
    vertex-deleted $3$-edge-colourable quasi-bipartite
    cubic graphs.
\end{itemize}
Conversely, every cubic graph constructed from the Petersen
graph by means of operations {\rm (O1)} and {\rm (O2)} has
defect $3$ and perfect matching index~$5$. Moreover, the
$6$-cycle $C$ constitutes a hexagonal core of the resulting
graph.
\end{theorem}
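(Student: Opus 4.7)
The plan is to split the theorem into its two implications and tackle each using the decomposition and sum machinery developed in Sections~\ref{sec:decomposition} and~\ref{sec:pmi5}. The spine of the argument is that any cubic graph of defect $3$ with $\pi=5$ must have a Petersen graph hiding inside its canonical decomposition, and all other decomposition factors are constrained to be $3$-edge-colourable with very specific quasi-bipartite structure.

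For the \emph{forward direction}, assume $G$ is a $2$-connected cubic graph with $\df{G}=3$ and $\pi(G)=5$, and let $\mathcal{D}=\{G_1,\dots,G_m\}$ be its canonical decomposition given by Theorem~\ref{thm:decomp-def3}. Here $G_1$ has defect $3$ and inherits the hexagonal core $C$, while every other $G_i$ is $3$-edge-colourable and cyclically $4$-edge-connected. I would first rule out the essential-triangle alternative of Theorem~\ref{thm:decomp-def3}(iii): if $G_1$ contained a triangle intersected by $C$, then Theorem~\ref{thm:triancore} would give $\pi(G_1)=4$, and iterated application of Lemma~\ref{lem:2+3-sum-4cover} to all the $2$-sums and $3$-sums reassembling $G$ from $\mathcal{D}$ would yield $\pi(G)=4$, contradicting the hypothesis. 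Hence $G_1$ is cyclically $4$-edge-connected, and Theorem~\ref{thm:pmi-c4c} (combined once more with Lemma~\ref{lem:2+3-sum-4cover} to ensure that $\pi(G_1)=4$ would propagate back to $\pi(G)=4$) forces $G_1\cong Pg$.

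Next I would analyse how the $3$-edge-colourable factors are glued to the Petersen core. By Lemma~\ref{lem:2+3-cuts}(i) every $2$-edge-cut used in the decomposition is disjoint from~$C$, so the distinguished edge on the Petersen side of each $2$-sum lies in $E(Pg)-E(C)$, and the attached $3$-edge-colourable factor realises an insertion of type~(O1). For each proper $3$-sum with distinguished vertex $u$ on the current Petersen-based subassembly $H$, I would argue inductively that $\pi(H)\geq 5$ and that $u$ is an apex: apex-ness of every vertex of $Pg$ is classical, and Lemma~\ref{lem:pmi4sum3gen} together with Lemma~\ref{lem:apex} propagates this property through the operations performed so far. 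Theorem~\ref{thm:3-sumpmi5-converse} then forces the attached factor to be quasi-bipartite with distinguished vertex constituting a trivial component of its quasi-partite set, which is precisely the correctness condition in~(O2); moreover, Lemma~\ref{lem:2+3-cuts} pins the corresponding vertex of $Pg$ into $V(Pg)-V(C)$. The main technical obstacle here is the bookkeeping: the induction must be arranged so that apex-ness, the bound $\pi\geq 5$, and the location of the relevant cuts outside~$C$ are simultaneously maintained at every stage.

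The \emph{converse direction} is a straightforward induction on the total number of operations performed on $Pg$. Each insertion of type~(O1) is a $2$-sum with a $3$-edge-colourable graph attached to an edge outside~$C$, so Lemma~\ref{lem:2sum} preserves $\pi\geq 5$ while the hexagonal core $C$ passes intact to the enlarged graph, keeping the defect equal to~$3$. Each correct substitution of type~(O2) is a proper $3$-sum with a $3$-edge-colourable quasi-bipartite factor whose distinguished vertex forms a trivial component of the quasi-partite set, so Theorem~\ref{thm:3-sumpmi5} yields $\pi\geq 5$ and preserves the inherited hexagonal core. Combining this lower bound with the general upper bound $\pi\leq 5$ from Theorem~\ref{thm:pmi-c4c}, which applies because defect equal to~$3$ is maintained throughout, gives $\pi=5$, as required.
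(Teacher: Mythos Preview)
Your proposal follows the same overall strategy as the paper and invokes the right lemmas, but there is one point where the argument as written becomes circular. In the forward direction, when you process the $3$-sums, you say you will ``argue inductively that $\pi(H)\geq 5$'' for each intermediate subassembly $H$. Forward induction does not work here: if $H_{i+1}=H_i\oplus_3 Q_{i+1}$ with $Q_{i+1}$ $3$-edge-colourable, then knowing $\pi(H_i)\geq 5$ does \emph{not} by itself give $\pi(H_{i+1})\geq 5$. Theorem~\ref{thm:3-sumpmi5} would do it, but only after $Q_{i+1}$ is known to be quasi-bipartite with the correct distinguished vertex---precisely what you are trying to deduce via Theorem~\ref{thm:3-sumpmi5-converse}, which in turn requires $\pi(H_{i+1})\geq 5$ as a hypothesis.

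The paper breaks this circularity by first performing all $2$-sums (for which Lemma~\ref{lem:2sum} propagates $\pi\geq 5$ forward) and then running a \emph{backward} induction through the chain of $3$-sums: since $\pi(G)=\pi(H_r)=5$ by hypothesis and each $Q_i$ is $3$-edge-colourable, Lemma~\ref{lem:2+3-sum-4cover} forces $\pi(H_{r-1})\geq 5$, then $\pi(H_{r-2})\geq 5$, and so on down to $H_p$. With $\pi(H_i)\geq 5$ thus secured for every $i$, apex-ness propagates forward via Lemma~\ref{lem:pmi4sum3gen}, and now Theorem~\ref{thm:3-sumpmi5-converse} applies at each step to yield the quasi-bipartite structure of $Q_{i+1}$. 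Your ``bookkeeping'' remark suggests you sensed the difficulty; this backward step is the specific device that resolves it. The rest of your outline---including the converse direction---matches the paper.
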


\begin{proof}
$(\Leftarrow)$ We show that every graph $G$ constructed from
$Pg$ with a fixed $6$-cycle $C$ by means of operations (O1) and
(O2) has defect $3$ and perfect matching index $5$. As regards
its perfect matching index, Theorem~\ref{thm:pmi-c4c} implies
that it is enough to show that $\pi(G)\ge 5$. Thus it
sufficient to prove the following by induction on $t$:

\medskip\noindent
Claim 1. \emph{Let $G_t$ be the graph obtained from the
Petersen graph with a fixed $6$-cycle $C$ by applying $t$
operations {\rm (O1)} and {\rm (O2)} in any order. Then
$\pi(G_t)\ge 5$ and $G_t$ has a $3$-array $\mathcal{M}_t$ whose
core is $C$.}

\medskip\noindent
\emph{Proof of Claim 1.} Before starting the induction
procedure it may be useful to realise that $0\le t\le 13$. For
$t=0$ the statement is obviously true. Assume that the
statement is true for some $t\ge 0$, and consider the graph
$G_{t+1}$.

First assume that $G_{t+1}$ arises from $G_t$ by applying an
edge-insertion into an edge $e\in E(Pg)-E(C)$. Thus there
exists a connected $3$-edge-colourable graph $K$ and an edge
$k$ of $K$ such that $G_{t+1}=G_t\oplus_2 K$ with $e$ and $k$
being the distinguished edges for the $2$-sum, respectively.
Let $R$ be the principal $2$-edge-cut of the $2$-sum. Due to
the its choice, $e$ is simply covered with respect to the
$3$-array $\mathcal{M}_t=\{M_1^t,M_2^t,M_3^t\}$. Without loss
of generality we may assume that $e\in M_1^t$. Let $N_1$, $N_2$
and $N_3$ be the colour classes of an arbitrary
$3$-edge-colouring of~$K$; we can clearly assume that $k\in
N_1$. Set $M_1^{t+1}=(M_1^t-\{e\})\cup (N_1-\{k\})\cup R$ and
$M_i^{t+1}=M_i^t\cup N_i$ for $i\in\{2,3\}$. Obviously,
$\mathcal{M}_{t+1}=\{M_1^{t+1},M_2^{t+1},M_3^{t+1}\}$ a
$3$-array for $G_{t+1}$ with $C$ being its hexagonal core. The
fact that $\pi(G_{t+1})\ge 5$ follows immediately from
Lemma~\ref{lem:2sum}.

Next assume that $G_{t+1}$ arises from $G_t$ by substituting a
vertex $u\in V(Pg)-V(C)$. Now, there is a connected
$3$-edge-colourable quasi-bipartite cubic graph $K$ and a
vertex $v$ of $K$ such that $G_{t+1}=G_t\oplus_3 K$, with $u$
and $v$ being the distinguished vertices for the $3$-sum.
Theorem~\ref{thm:3-sumpmi5} implies that $\pi(G)\ge 5$.
Moreover, since $u$ does not lie on $C$, each edge incident
with $u$ is simply covered with respect to the $3$-array
$\mathcal{M}_t$. Since $K$ is $3$-edge-colourable, each perfect
matching $M_i^t\in\mathcal{M}_t$ can be extended to a perfect
matching $M_i^{t+1}$ of $G_{t+1}$ in such a way that the core
of the $3$-array
$\mathcal{M}_{t+1}=\{M_1^{t+1},M_2^{t+1},M_3^{t+1}\}$ is
again~$C$.

$(\Rightarrow)$ Let $G$ be a $2$-connected cubic graph with
$\df{G}=3$ and $\pi(G)=5$ different from the Petersen graph
$Pg$. Let $C$ be an arbitrary hexagonal core of $G$. We intend
to show that $G$ can be obtained from $Pg$ by a series of
operations (O1) and (O2) leaving $C$ intact.  We start by
applying Theorem~\ref{thm:decomp-def3}. It states that $G$
admits a decomposition $\mathcal{G}=\{G_1,\ldots,G_m\}$ such
that $G_1$ has defect $3$ and inherits $C$ as a hexagonal core,
all other decomposition factors are $3$-edge-colourable and
cyclically $4$-edge-connected, while $G_1$ is either cyclically
$4$-edge-connected or contains a triangle $T$ intersected by
$C$ such that $G_1/T$ is cyclically $4$-edge-connected.
Moreover, $G$ can be reconstructed from $\mathcal{G}$ by a
repeated application of $2$-sums and proper $3$-sums. We first
prove that that $\pi(G_1)\ge 5$. Suppose to the contrary that
$\pi(G)=4$. Since $G$ can be obtained from $\mathcal{G}$ by
applying $2$-sums and $3$-sums, we can repeatedly apply
Lemma~\ref{lem:2+3-sum-4cover} to deduce that $\pi(G)=4$, which
contradicts the assumption. Thus $\pi(G_1)\ge 5$. 
We further claim that $G_1$ is
cyclically $4$-edge-connected, implying that $\mathcal{G}$ is
the canonical decomposition of $G$. If $G_1$ is not cyclically
$4$-edge-connected, then, by Theorem~\ref{thm:decomp-def3}, it
contains a triangle $T$ intersecting~$C$, which is s a
hexagonal core of $G_1$. Theorem~\ref{thm:triancore} now yields
that $\pi(G_1)=4$, which again is a contradiction. Therefore
$G_1$ is a cyclically $4$-edge-connected cubic graph with
$\df{G_1}=3$ and $\pi(G_1)\ge 5$; according to
Theorem~\ref{thm:pmi-c4c}, $G_1$ is the Petersen graph $Pg$.

To show that $G$ can be obtained from the Petersen graph by
using operations (O1) and (O2) we build $G$ from $G_1=Pg$ by
using the canonical decomposition $\mathcal{G}$ of $G$. Recall
that $C$ is a hexagonal core of $G_1$. Further recall that
certain edges and vertices in $G_1$ are distinguished for
$2$-sums and $3$-sums applied during the reconstruction of $G$
from~$\mathcal{G}$. The same holds for the remaining
decomposition factors as well. The uniqueness of the
decomposition implies that during the reconstruction process
$2$-sums and proper $3$-sums can be used in any order. Thus we
can begin with $2$-sums and $3$-sums that do not use
distinguished edges or vertices from~$G_1$. Applying $2$-sums
and $3$-sums to all such distinguished edges and vertices
results in a decomposition $\mathcal{Q}=\{Q_1,\ldots, Q_r\}$
of~$G$, where $Q_1=G_1$ is the Petersen graph and each $Q_i$
with $i\ge 2$ is a connected $3$-edge-colourable graph. Now we
reconstruct $G$ from $\mathcal{Q}$. It is clear that each
$2$-sum or $3$-sum applied in this reconstruction involves an
edge or a vertex of $Q_1=Pg$. In fact, it follows from the
definition of the decomposition of a cubic graph along a small
independent edge cut that each $Q_i$ with $i\ge 2$ contains
exactly one distinguished element whose counterpart lies
in~$Q_1$. The distinguished vertices and edges lying in $Q_1$
must occur outside $C$, because $C$ has been left intact during
the entire decomposition process. In particular, $r\le 14$.

We reorder $\mathcal{Q}$ in such a way that the graphs $Q_2,
\ldots, Q_p$ have distinguished edges and the graphs $Q_{p+1},
\ldots, Q_r$ have distinguished vertices. We can now construct
$G$ inductively by setting $H_1=Q_1$ and $H_{i+1}=H_i\oplus_2
Q_{i+1}$ for $i\in\{1,\ldots, p-1\}$ and $H_{i+1}=H_i\oplus_3
Q_{i+1}$ for $i\in\{p, \ldots, r-1\}$. Consequently, $G=H_r$.
Note that each $H_i$, with $p\le i\le r-1$, has a distinguished
vertex, denoted by $u_i$, which is an original vertex of
$Q_1=Pg$.

What remains to be proved is the following statement.

\medskip\noindent
Claim 2. \emph{For every $i\ge p+1$ the graph
$Q_i\in\mathcal{Q}$ is quasi-bipartite and each $H_i$ arises
from $H_{i-1}$ by a correct substitution of its distinguished
vertex $u_i$.}

\medskip\noindent
\emph{Proof of Claim 2.} Our aim is to apply
Theorem~\ref{thm:3-sumpmi5-converse}. To this end we first
prove that every graph $H_i$ with $i\ge p$ has $\pi(H_i)\ge 5$
and $u_i$ is an apex of $H_i$. Recall that each $Q_i$ with
$2\le i\le p$ is $3$-edge-colourable, so we can repeatedly
apply Lemma~\ref{lem:2sum} to conclude that $\pi(H_i)\ge 5$
whenever $1\le i \le p$. In particular, $\pi(H_p)\ge 5$. To
prove that $\pi(H_i)\ge 5$ also for each $i\ge p+1$ we start
with the fact that $G=H_r=H_{r-1}\oplus_3 Q_r$ where
$\pi(H_r)=\pi(G)=5$ and $Q_r$ is $3$-edge-colourable. It
follows that $\pi(H_{r-1})\ge 5$, because otherwise
Lemma~\ref{lem:2+3-sum-4cover} would imply that
$\pi(G)=\pi(H_{r-1}\oplus_3 Q_r)=4$, contrary to the
assumption. If we use the same argument inductively with
decreasing index, we can conclude that every graph $H_i$ with
$p+1\le i\le r$ has $\pi(H_i)\ge 5$.

Next we prove that each distinguished vertex $u_i$ of $H_i$
with $p\le i\le r-1$ is an apex. Recall that each $H_i$ arises
from $Pg$ by a repeated application of edge-insertion or
vertex-substitution. Since every vertex in the Petersen graph
is an apex, repeated use of Lemma~\ref{lem:pmi4sum3gen} implies
that each $u_i$ for $i\geq p$ is an apex of $H_i$.

Finally we prove that each $Q_i$ with $i\ge p+1$ is
quasi-bipartite. We already know that $\pi(H_i)\ge 5$ and that
$u_i$ is apex for each $i\ge p$. Since $H_{i+1}=H_i\oplus_3
Q_{i+1}$ whenever $p\le i\le r-1$, from
Theorem~\ref{thm:3-sumpmi5-converse} we infer that $Q_{i+1}$ is
quasi-bipartite and $H_{i+1}$ arises from $H_i$ by a correct
substitution of $u_i$. This completes the proof of Claim~2 and
also the proof of the theorem.
\end{proof}

\section{Short cycle covers of cubic graphs}
\noindent{}The purpose of this section is to apply results of
preceding sections to proving results related to the
$7/5$-conjecture. Recall that a \emph{cycle cover} of $G$ is a
collection $\mathcal{C}$ of cycles such that each edge of~$G$
belongs to at least one member of $\mathcal{C}$. The
\emph{length} of a cycle cover~$\mathcal{C}$, denoted by
$\ell(\mathcal C)$, is the sum of lengths of all its cycles.
The length of a shortest cycle cover of $G$ is denoted by
$\scc(G)$. The quantity $\scc(G)/|E(G)|$ is the \emph{cycle
covering ratio} of $G$.

Given a  cycle cover $\mathcal{C}$ of $G$, we define the
\emph{weight} of an edge $e$ with respect to $\mathcal{C}$,
denoted by $w_{\mathcal{C}}(e)$ (or simply $w(e)$, if no
confusion arises), to be the number of cycles of $\mathcal{C}$
that contain $e$. The \emph{weight} of a vertex $v$, denoted by
$w(v)$, is the sum of the weights of all edges incident with
$v$. The weight of every vertex is obviously even.

As previously mentioned, the length of every cycle cover of any
bridgeless cubic graph is at least $4/3\cdot m$, where $m$ is
the number of edges. This value is known \cite{S2} to be
reached by cubic graphs with perfect matching index at most
$4$. For the Petersen graph we have $\scc(Pg)=4/3\cdot m+1$ as
indicated in Figure~\ref{fig:Pg-cycle-cover}. As we shall see
next, the value $4/3\cdot m+1$ is shared by all snarks of
defect $3$ and perfect matching index $5$.

\begin{figure}[h!]
\centering
\includegraphics[scale=1.4]{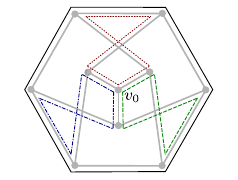}
\caption{A shortest cycle cover for the Petersen graph.}
\label{fig:Pg-cycle-cover}
\end{figure}

The following lemma will be useful.

\begin{lemma}\label{lem:plus1}
Let $G$ be a cubic graph with $m$ edges. A cycle cover of $G$
has length $4/3\cdot m+1$ if and only all vertices have weight
$4$, except for one, which has weight $6$.
\end{lemma}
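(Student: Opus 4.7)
The plan is to use a simple double-counting identity together with the evenness and lower-bound constraints on vertex weights. Let $n=|V(G)|$, so $m=3n/2$, and observe that for any cycle cover $\mathcal{C}$ of $G$
\[
\ell(\mathcal{C})=\sum_{e\in E(G)}w_{\mathcal{C}}(e)=\frac{1}{2}\sum_{v\in V(G)}w_{\mathcal{C}}(v),
\]
since each edge contributes its weight to both of its endpoints.

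The first step is to record two properties of the weight function. Every edge of $G$ is covered, so $w(e)\ge 1$ for all edges $e$. Moreover, every cycle in $\mathcal{C}$ meets a vertex $v$ in an even number of edges (by the definition of a cycle as a subgraph with all degrees even), and summing such contributions over all members of $\mathcal{C}$ keeps $w(v)$ even. Since $v$ is cubic and incident with three edges of weight at least $1$, the sum $w(v)\ge 3$; being even, $w(v)\ge 4$.

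The second step turns the displayed identity into the equivalence. Applying the inequality $w(v)\ge 4$ gives $\ell(\mathcal{C})\ge 2n=\tfrac{4}{3}m$. The equality $\ell(\mathcal{C})=\tfrac{4}{3}m+1$ is equivalent to $\sum_{v}w(v)=4n+2$. Writing $w(v)=4+2a_v$ with $a_v\in\mathbb{Z}_{\ge 0}$, this becomes $\sum_v a_v=1$, which occurs exactly when one vertex has $a_v=1$ (hence weight $6$) while all others satisfy $a_v=0$ (hence weight $4$). The converse direction is immediate from the same computation.

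There is no real obstacle here; the argument is a single-line parity plus counting observation. The only thing to mention is that $m$ is automatically divisible by $3$ in a cubic graph (since $n$ is even and $m=3n/2$), so $\tfrac{4}{3}m+1$ is an integer and matches the parity of $2n+1$.
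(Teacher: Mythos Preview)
Your proof is correct and follows essentially the same double-counting approach as the paper: both arguments use the identity $\ell(\mathcal{C})=\tfrac12\sum_v w(v)$ together with $m=3n/2$ to reduce the statement to $\sum_v w(v)=4n+2$. You simply spell out in more detail why each $w(v)$ is even and at least $4$, which the paper leaves implicit when it says the converse follows ``by reading this calculation backwards.''
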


\begin{proof}
Assume that $G$ has a cycle cover $\mathcal C$ such that
exactly one vertex of $G$ has weight $6$ and all other vertices
have weight $4$. Then
$$\ell(\mathcal{C})=\frac{1}{2}\sum  w(v)
=(4|V(G)|+2)/2=2|V(G)|+1=4/3\cdot |E(G)|+1,$$ as claimed. The
converse follows by reading this calculation backwards.
\end{proof}

Here is the main result of this section.

\begin{theorem}\label{thm:scc-def3}
Every snark of defect $3$ has a cycle cover of length at
most $4/3\cdot m+1$, where $m$ is the number of edges.
\end{theorem}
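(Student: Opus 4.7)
I will split on the value of the perfect matching index. By Theorem~\ref{thm:pmi-c4c}, every snark $G$ with $\df{G}=3$ has $\pi(G)\in\{4,5\}$. When $\pi(G)=4$, Steffen's theorem cited in the introduction (Theorem~3.1 of \cite{S2}) furnishes a cycle cover of length exactly $4m/3$, which is within the required bound. It therefore suffices to deal with $\pi(G)=5$. By Theorem~\ref{thm:main}, $G$ is then either the Petersen graph itself, in which case Figure~\ref{fig:Pg-cycle-cover} exhibits a cycle cover of length $21=4\cdot 15/3+1$, or $G$ arises from $Pg$ with a fixed hexagonal core $C$ by a sequence of operations \textup{(O1)} and \textup{(O2)} performed outside $E(C)$ and $V(C)$ respectively.

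The strategy is induction on the number of operations. For the base case, Lemma~\ref{lem:plus1} says the Petersen cover from the figure has a unique vertex of weight~$6$; by vertex-transitivity of $Pg$ I may assume that this weight-$6$ vertex lies in $V(C)$, and in the resulting cover every edge has weight $1$ or $2$ (the three edges at the weight-$6$ vertex all carry weight $2$, while every weight-$4$ vertex is incident to one weight-$2$ edge and two weight-$1$ edges). The invariant to be preserved is: after $t$ operations, the intermediate graph $H$ carries a cycle cover $\mathcal{C}_H$ of length $4|E(H)|/3+1$ with all edge weights in $\{1,2\}$, a unique weight-$6$ vertex lying in $V(C)$, and every other vertex of weight $4$.

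For the inductive step, suppose $G=H\oplus_{\star}K$ with $\star\in\{2,3\}$ and $K$ a $3$-edge-colourable cubic graph (quasi-bipartite in case \textup{(O2)}). Let $\mathcal{C}_K$ be the standard cover $F_{12}\cup F_{13}$, where $F_{ij}$ is the $2$-factor induced by two colour classes of a proper $3$-edge-colouring of $K$. By the invariant, the distinguished element of $H$ lies outside $C$, so it is either an edge $e$ with $w_H(e)\in\{1,2\}$ or a vertex $u$ of weight $4$. In the latter case the three edges at $u$ have weights summing to $4$ and none is zero, hence they must carry the pattern $(2,1,1)$, and exactly two cycles of $\mathcal{C}_H$ pass through $u$, sharing the weight-$2$ edge. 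I then choose the $3$-edge-colouring of $K$ so as to match the weight pattern at the distinguished element of $K$: for \textup{(O1)} place the distinguished edge $f$ in colour $1$ when $w_H(e)=2$ and in colour $2$ when $w_H(e)=1$; for \textup{(O2)} label the edges $f_1,f_2,f_3$ at the distinguished vertex $v$ so that $f_i$ carries colour $i$, with $f_1$ corresponding to the weight-$2$ edge at $u$. The broken cycles in $H$ are then reassembled into cycles of $G$ by concatenating with the corresponding paths of $F_{12}$ and $F_{13}$ in $K-f$, respectively $K-v$, across the principal edge cut; the remaining components of $F_{12}\cup F_{13}$ stay as cycles in $G$. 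The resulting cover of $G$ inherits the weight-$6$ vertex from $H$ and has all edge weights in $\{1,2\}$.

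A direct length computation completes the induction. In case \textup{(O1)}, since $|E(G)|=|E(H)|+|E(K)|$, $w_H(e)=w_K(f)$, and $\ell_K=4|E(K)|/3$, the new length equals
\[
(\ell_H-w_H(e))+(\ell_K-w_K(f))+2w_H(e)=\ell_H+\ell_K=\frac{4|E(G)|}{3}+1.
\]
In case \textup{(O2)}, since $|E(G)|=|E(H)|+|E(K)|-3$ and the new edges carry weights $2,1,1$,
\[
(\ell_H-4)+(\ell_K-4)+(2+1+1)=\ell_H+\ell_K-4=\frac{4|E(G)|}{3}+1.
\]
The main obstacle is the matching of weight patterns across the principal cut; this is always possible because the three edges incident to any vertex of $K$ receive all three colours under any proper $3$-edge-colouring, so we may prescribe freely which colour $f$, respectively $f_1,f_2,f_3$, receives. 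Note that the quasi-bipartite property of $K$ in \textup{(O2)}, while needed to identify $G$ as arising from $Pg$ via Theorem~\ref{thm:main}, plays no further role in the cover construction itself.
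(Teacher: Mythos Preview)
Your proposal is correct and follows essentially the same route as the paper's proof: split on $\pi(G)$, invoke Steffen for $\pi(G)=4$, use Theorem~\ref{thm:main} for $\pi(G)=5$, place the unique weight-$6$ vertex of the Petersen cover on $C$ by vertex-transitivity, and extend inductively through (O1) and (O2) by matching the weight pattern at the distinguished element with a suitably labelled pair of bicoloured $2$-factors of $K$. Your explicit length bookkeeping and the observation that the quasi-bipartite hypothesis is not used in the cover extension are accurate and add clarity, but the underlying argument is the same as the paper's.
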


\begin{proof}
If $\pi(G)\le4$, then $\scc(G)=4/3\cdot|E(G)|$ according to
\cite[Theorem~3.1]{S2}. We may therefore assume that
$\pi(G)\ge5$. By Theorem~\ref{thm:main}, $G$ arises from the
Petersen graph $Pg$ with a fixed $6$-cycle $C$ by a series of
operations (O1) and (O2), that is, edge-insertions involving
connected $3$-edge-colourable cubic graphs and correct
vertex-substitutions involving connected $3$-edge-colourable
quasi-bipartite cubic graphs.

Lemma~\ref{lem:plus1} implies that it is sufficient to
construct a cycle cover of $G$ such that exactly one vertex of
$G$ has weight $6$ and all other vertices have weight $4$. We
construct the required cycle cover starting from a cycle cover
$\mathcal{P}$ of the Petersen graph which consists of three
pentagons sharing the same vertex $v_0$ plus a hexagon induced
by all the vertices at distance~$2$ from $v_0$, see
Figure~\ref{fig:Pg-cycle-cover}. In particular, $v_0$ is the
only vertex of $Pg$ of weight~$6$ with respect to
$\mathcal{P}$, all other vertices having weight $4$. Since $Pg$
is vertex-transitive, we may choose $\mathcal{P}$ in such a way
that $v_0$ lies on our specified 6-cycle $C$. This will
guarantee that during the construction of $G$ from $Pg$ the
vertex $v_0$ remains intact and the weight of $v_0$ does not
change.

We proceed by induction on the number of edge-insertions (O1)
and vertex-substitutions (O2) performed on $Pg$ to construct
$G$. For $t\ge 0$, let $G_t$ denote a graph obtained from the
Petersen graph by a sequence of $t$ edge-insertions and
vertex-substitutions of Theorem~\ref{thm:main}. We intend to
prove that each $G_t$ has a cycle cover $\mathcal{P}_t$ under
which all vertices of $G_t$ have weight $4$ except for $v_0$,
which has weight $6$.

For $t=0$ we set $\mathcal{P}_0=\mathcal{P}$, which verifies
the induction basis. For the induction step assume that the
required cycle cover $\mathcal{P}_t$ exists for some $t\ge 0$.
If all the edges from $E(Pg)-E(C)$ and all the vertices of
$V(Pg)-V(C)$ have already been used by operations (O1) and
(O2), then we have nothing to prove. Otherwise, there is an
edge in $E(Pg)-E(C)$ or a vertex $V(Pg)-V(C)$ that have not
been used yet.

First assume that $G_{t+1}$ arises from $G_t$ by applying an
edge-insertion into an edge $e\in E(Pg)-E(C)$. Thus there is a
connected $3$-edge-colourable graph $H$ and an edge $h$ of $H$
such that $G_{t+1}=G_t\oplus_2 H$ with $e$ and $h$ being the
distinguished edges for the $2$-sum, respectively. Take an
arbitrary $3$-edge-colouring $\psi$ of $H$ and let $C_{i,j}$,
with $i,j\in\{1,2,3\}$, denote the bi-coloured $2$-factor
consisting of the edges coloured $i$ or $j$. As already
mentioned, any two distinct bi-coloured $2$-factors constitute
a cycle cover of $H$ with every vertex of weight~$4$.

Without loss of generality we may assume that $\psi(e)=1$.
Since $e$ has been inherited from $Pg$ to $G_t$, its weight
$w_t(e)$ under $\mathcal{P}_t$ is the same as that under
$\mathcal{P}$, which may be $1$ or $2$. If $w_t(e)=1$, we merge
$\mathcal{P}_t$ with $\{C_{1,2}, C_{2,3}\}$ into a cycle cover
$\mathcal{P}_{t+1}$ of $G_{t+1}$ in an obvious way; if
$w_t(e)=2$, we use $\{C_{1,2}, C_{1,3}\}$ instead. It is easy
to see that in both cases the cover $\mathcal{P}_{t+1}$ has the
required property.

Next assume that $G_{t+1}$ arises from $G_t$ by substituting a
vertex $u\in V(Pg)-V(C)$. Now, there is a connected
$3$-edge-colourable quasi-bipartite cubic graph $K$ and a
vertex $v$ of $K$ such that $G_{t+1}=G_t\oplus_3 K$, with $u$
and $v$ being the distinguished vertices for the $3$-sum. Since
the weight of $u$ under the cover $\mathcal{P}_t$ is $4$,
exactly one of the edges incident with $u$ in $G_t$ has weight
$2$, say $e_1$. Let $e_1'$ be the edge of $K$ incident with $v$
that is glued with $e_1$ to form an edge of the principal
$3$-edge-cut of $G_t\oplus_3 K$. Consider an arbitrary
$3$-edge-colouring $\sigma$ of $K$. If $\sigma(e_1')=1$, we
construct a cycle cover $\mathcal{P}_{t+1}$ of $G_{t+1}$ by
merging $\mathcal{P}_t$ with the cycle cover $\{C_{1,2},
C_{1,3}\}$ of $K$. If $\sigma(e_1')\ne 1$ we apply a suitable
permutation of colours and proceed as before. Clearly, the
vertex $v_0$ is the only vertex of weight $6$ under the cycle
cover $\mathcal{P}_{t+1}$. This concludes the induction step.

By Theorem~\ref{thm:main}, $G=G_t$ for some $t\ge 0$,
and in all the considered cases $v_0$ remains the only vertex
of weight $6$, with all other vertices being of weight $4$. By
Lemma~\ref{lem:plus1}, $G$ has a cycle cover of length
$4/3\cdot m+1$, as claimed. This completes the proof.
\end{proof}

We have arrived at our final result, which shows that every
cubic graph satisfying a simple assumption independent of its
colouring defect admits a cycle cover of length at most $4/3 \cdot m +1$.
Recall that a pentagon $D$ in a snark $G$ is heavy if $D$
contains an edge $x$ such that $G\sim x$ is
$3$-edge-colourable.

\begin{theorem}\label{thm:pentagons}
If a snark $G$ contains a heavy pentagon, then $\scc(G)\le
4/3\cdot|E(G)|+1$.
\end{theorem}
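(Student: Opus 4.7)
The plan is to reduce to the defect-3 case by inflating a vertex of the heavy pentagon. If $\df{G}=3$, the conclusion follows immediately from Theorem~\ref{thm:scc-def3}, so I would focus on the case $\df{G}\ge 4$. Pick any vertex $v$ on the heavy pentagon of $G$ and let $G^v$ be the snark obtained by inflating $v$ to a triangle $T$. By Corollary~\ref{thm:essential} we have $\df{G^v}=3$; since $\df{G^v/T}=\df{G}\ge 4$, the triangle $T$ is essential, so Lemma~\ref{lem:essential}(i) forces every hexagonal core of $G^v$ to intersect $T$. Theorem~\ref{thm:triancore} then yields $\pi(G^v)=4$, and Steffen's theorem \cite[Theorem~3.1]{S2} supplies a cycle cover $\mathcal{C}$ of $G^v$ with $\ell(\mathcal{C})=\tfrac{4}{3}|E(G^v)|$.

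Next I would push $\mathcal{C}$ down to a cycle cover of $G$ by contracting $T$ back to $v$. For each $\gamma\in\mathcal{C}$, set $\gamma':=\gamma\setminus E(T)$ and $\mathcal{C}':=\{\gamma':\gamma\in\mathcal{C}\}$. At every vertex of $G$ distinct from $v$ the degree of $\gamma'$ agrees with the degree of $\gamma$ and is therefore even. At $v$, the degree of $\gamma'$ equals the number of external edges of $T$ lying in $\gamma$; writing $d_i$ for the degree of $\gamma$ at the triangle vertex $v_i$, $r_i$ for the number of triangle edges at $v_i$ in $\gamma$, and $x_i\in\{0,1\}$ for the indicator that the external edge $f_i$ lies in $\gamma$, the identity $d_i=x_i+r_i$ together with the evenness of $d_i$ gives $\sum x_i\equiv\sum r_i=2|\gamma\cap E(T)|\equiv 0\pmod 2$. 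Hence each $\gamma'$ is a cycle in $G$ and $\mathcal{C}'$ is a cycle cover of $G$.

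To finish, I would bound $\ell(\mathcal{C}')$. Let $w_{12},w_{13},w_{23}$ denote the weights of the three triangle edges under $\mathcal{C}$. Since every edge is covered at least once, $w_{ij}\ge 1$, and therefore $w_{12}+w_{13}+w_{23}\ge 3$. Consequently,
\[
\ell(\mathcal{C}')=\ell(\mathcal{C})-(w_{12}+w_{13}+w_{23})\le \tfrac{4}{3}|E(G^v)|-3=\tfrac{4}{3}\bigl(|E(G)|+3\bigr)-3=\tfrac{4}{3}|E(G)|+1,
\]
as required. The crux of the argument is the passage from $G$ to $G^v$: it is precisely the heavy-pentagon hypothesis, mediated by Corollary~\ref{thm:essential} and Theorem~\ref{thm:triancore}, that brings $G^v$ into the regime $\pi(G^v)=4$ in which Steffen's bound $\tfrac{4}{3}|E(G^v)|$ is available. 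Once this is in place, the contraction is routine and the additive slack of $+1$ in the target bound matches exactly the $+4$ from three extra edges minus the $\ge 3$ saved by deleting the three triangle edges.
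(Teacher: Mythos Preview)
Your proof is correct and follows essentially the same route as the paper: inflate a vertex of the heavy pentagon to a triangle, invoke Theorem~\ref{thm:triancore} to obtain $\pi(G^v)=4$, apply Steffen's bound, and contract the triangle back. The paper avoids your case split by appealing to Theorem~\ref{thm:inflation} directly (which yields a hexagonal core meeting $T$ irrespective of $\df{G}$) and justifies the contraction step via the weight-$4$ condition instead of your parity computation, but these are cosmetic differences.
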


\begin{proof}
Let $G$ be a snark containing a heavy pentagon $D$ of $G$ and
let $v$ be a vertex of $D$. Form the graph $G^{v}$ by inflating
$v$ to a triangle $T$. Theorem~\ref{thm:inflation} tells us
that $\df{G^{v}}=3$ and Theorem~\ref{thm:triancore} further
states that $\pi(G^{v})=4$. By Theorem~3.1 in \cite{S2},
$G^{v}$ has a cycle cover $\mathcal{C}$ of length
$\ell(\mathcal{C})=4/3\cdot|E(G^{v})|$. One can easily see that
in a cycle cover of this length each vertex of $G^{v}$ has
weight equal to $4$.  Consequently, at most one of the edges of
$T$ has weight $2$. Observe that to cover the edges leaving $T$
one needs at least two distinct members of $\mathcal{C}$. It
follows that $T$ cannot be a part of any cycle from
$\mathcal{C}$, for otherwise $T$ would have a vertex of weight
at least $6$. Now, if we contract $T$ back to $v$, every cycle
from $\mathcal{C}$ transforms to a cycle of $G$. In other
words, $\mathcal{C}$ induces a cycle cover $\mathcal{D}$ of
$G$. Since the contribution of $T$ to $\ell(\mathcal{C})$ is at
least $3$, we conclude that
$\ell(\mathcal{D})\le\ell(\mathcal{C})-3$. Consequently,
$$\scc(G)\le\ell(\mathcal{D})\le\ell(\mathcal{C})-3=
4/3\cdot(|E(G)|+3)-3=4/3\cdot|E(G)|+1,$$
as claimed. \end{proof}

Note that the Petersen graph satisfies the assumptions of
Theorem~\ref{thm:pentagons} and reaches the upper bound stated
therein.

\section{Concluding remarks}\label{sec:remarks}

\subsection{}
From among all 64 326 024 nontrivial snarks of order 36 or
less, generated by Brinkmann et al. \cite{BGHM}, only 621
graphs do not satisfy the assumption of
Theorem~\ref{thm:pentagons}. This may have one of two reasons:
either every edge lying on a pentagon is removable (618 graphs)
or the snark in question has girth at least $6$ (the remaining
three graphs: the double star snark of order 30 and the flower
snarks $J_7$ and $J_9$ of order 28 and 36, respectively). The
condition is therefore satisfied by approximately 99.999035
percent of all nontrivial snarks of order up to 36.

\subsection{}
Following the terminology introduced by Jaeger
\cite{Jae-strong}, we call a snark $G$ \emph{strong} if it has
no non-removable pair of adjacent vertices; that is to say, if
for any two adjacent vertices $u$ and $v$ of $G$ the graph
$G-\{u,v\}$ is not $3$-edge-colourable. Equivalently, $G$ is
strong if and only if each edge of $G$ is non-suppressible. We
say that a snark is \emph{weak} if it is not strong.

Observe that Theorem~\ref{thm:pentagons} only applies to weak
snarks as  it requires a snark in question to have a
non-suppressible edge lying on a pentagon. As shown above, such
a snark admits a cycle cover of length at most
$4/3\cdot|E(G)|+1$ and, in particular, it satisfies the
$7/5$-conjecture.

In \cite[Theorem~1.5]{LHLZ}, Liu et al. came up with a
different sufficient condition for weak snarks to satisfy the
$7/5$-conjecture. The condition of Liu et al. uses the concept
of a strong path connecting two odd circuits of a $2$-factor.
This concept was introduced (using a different terminology) by
M\'a\v cajov\'a and \v Skoviera in \cite{MS-Comb} as follows.

In a snark $G$, let $F$ be a $2$-factor with two odd circuits
$C_1$ and $C_2$ and all other circuits even, and let $M$ be the
$1$-factor complementary to $F$. Construct a matching $L$ in
$G$ by including every second edge of each even circuit of~$F$
arbitrarily. In this situation, every edge $e\in\delta_G(C_1)$
gives rise to a unique maximal $M$-$L$-alternating path $P_e$
in $G$ starting at a vertex of $C_1$ and containing $e$.
Clearly, the other endvertex of $P_e$ must lie in $C_1\cup
C_2$. If $P_e$ terminates in $C_2$, we say that $P_e$ is a
\emph{strong} $C_1$-$C_2$-path; more precisely, it is an
\emph{$(M,L)$-strong} $C_1$-$C_2$-path. Liu et al. \cite{LHLZ}
proved that if, for a suitable choice of matchings $M$ and $L$,
a snark $G$ contains two $(M,L)$-strong $C_1$-$C_2$-paths one
of which has length $1$, then $G$ admits a cycle cover of
length at most $4/3\cdot|E(G)|+2$.

Every snark satisfying the latter condition is weak, because
the vertices connected by the strong $C_1$-$C_2$-path of length
$1$ constitute a non-removable pair of adjacent vertices.
However, multiple choices for the matchings $M$ and $L$ make
the sufficient condition stated in Theorem~1.5 of \cite{LHLZ}
rather difficult to verify. Indeed, if $M$ and $L$ are not
suitably chosen, the path $P_e$ need not terminate in~$C_2$.

The fact that both Theorem~1.5 of \cite{LHLZ} and our
Theorem~\ref{thm:pentagons} only apply to weak snarks appears
to be a significant drawback. Indeed, among cubic graphs, every
potential counterexample to the $7/5$-conjecture must be a
snark with perfect matching at least~$5$. We have performed
extensive computations involving thousands of nontrivial snarks
arising from various constructions of snarks with perfect
matching index at least~$5$ (including the constructions
described in \cite{MS_P1, MS_P2}, and many others).
Computational  results reveal that all snarks that we have
tested are strong.

In view of these findings we propose the following conjecture.

\begin{conjecture}
{\rm With the only exception of the Petersen graph, every
nontrivial snark with perfect matching index at least $5$ is
strong, that is, all its edges are suppressible.}
\end{conjecture}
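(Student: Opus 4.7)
I propose to approach the conjecture by its contrapositive: assume $G$ is a nontrivial snark (cyclically $4$-edge-connected, girth at least~$5$) with $\pi(G)\ge 5$, and assume $G$ admits a non-suppressible edge $e=uv$, so that $G-\{u,v\}$ has a proper $3$-edge-colouring $\phi$. The goal is to conclude $G\cong Pg$. The plan splits into three steps: first, show that the hypothesis on $uv$ forces $\df{G}=3$; second, invoke Theorem~\ref{thm:main} to deduce that $G$ either equals $Pg$ or is obtained from $Pg$ by a non-empty sequence of operations (O1) and (O2); and third, argue that any such non-empty sequence destroys cyclic $4$-edge-connectivity, leaving $G\cong Pg$ as the only possibility.

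For the first step, let $a_1,a_2$ be the neighbours of $u$ other than $v$ and let $b_1,b_2$ be those of $v$ other than $u$; denote by $\alpha_i$ (resp.\ $\beta_i$) the unique colour missing at $a_i$ (resp.\ $b_i$) under $\phi$. Extending $\phi$ to $\delta_G(\{u,v\})$ by these forced missing colours and applying Parity Lemma (Lemma~\ref{lem:par}) to $G-\{u,v\}$ yields $\alpha_1+\alpha_2+\beta_1+\beta_2=0$ in $\mathbb{Z}_2\times\mathbb{Z}_2$. Since $G$ is a snark, $\phi$ does not extend to a proper $3$-edge-colouring of $G$; combined with the parity identity this forces $\alpha_1=\alpha_2=\alpha$ and $\beta_1=\beta_2=\beta$ for suitable colours $\alpha,\beta$. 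The colour classes $N_c$ of $\phi$ with $c\notin\{\alpha,\beta\}$ extend to perfect matchings of $G$ by appending $uv$ (two such matchings when $\alpha=\beta$, one when $\alpha\ne\beta$), but the class $N_\alpha$---and, when $\alpha\ne\beta$, also $N_\beta$---does not extend. I would repair the failing class(es) by Kempe-chain switches along suitable alternating paths in $G-\{u,v\}$, producing a third perfect matching $M$ of $G$ so that the resulting $3$-array of $G$ leaves uncovered exactly three edges. By Theorem~\ref{thm:hexcore}, these three edges must then alternate with three doubly covered ones on an induced $6$-cycle through $uv$, giving $\df{G}=3$.

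Given the first step, the second is immediate from Theorem~\ref{thm:main}: $G$ either equals $Pg$ or arises from $Pg$ with a fixed hexagonal core $C$ by applying operations (O1) and (O2) at edges and vertices of $Pg$ outside $C$. For the third step, recall that (O1) is a $2$-sum and (O2) is a proper $3$-sum, so each application introduces into the resulting graph its principal cut, which is a cycle-separating $2$-edge-cut or an independent (hence cycle-separating) $3$-edge-cut. This cut is preserved throughout the remaining reconstruction of $G$ from the decomposition factors and contradicts cyclic $4$-edge-connectivity of $G$, unless no operation was applied at all, in which case $G\cong Pg$.

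The main obstacle is the first step. The naive attempt to extend $\phi$'s colour classes to perfect matchings of $G$ fails because in the class $N_\alpha$ both $a_1$ and $a_2$ demand the colour $\alpha$ at an edge incident with $u$, while only one such edge can actually be added without violating the matching property at~$u$. Repairing this requires a delicate Kempe-chain analysis of the $\alpha\beta$-subgraph of $G-\{u,v\}$: the parity identity together with the snark property implies that its alternating components pair $a_1$ with $a_2$ and $b_1$ with $b_2$, and one hopes to exploit the two resulting alternating paths to construct the missing perfect matching. Verifying that precisely three edges remain uncovered---so that Theorem~\ref{thm:hexcore} applies and yields a hexagonal core passing through $uv$---is where I expect the real difficulty to lie, and may well require genuinely new ideas beyond those developed in the present paper.
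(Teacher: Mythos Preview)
This statement is presented in the paper as an open \emph{conjecture}, supported only by computational evidence; there is no proof to compare against. Your Steps~2 and~3 are correct---in fact, once $\df{G}=3$ is known, Theorem~\ref{thm:pmi-c4c} alone already gives $G\cong Pg$ from cyclic $4$-edge-connectivity and $\pi(G)\ge 5$, so Theorem~\ref{thm:main} is not needed. But this shows that Step~1 is not a preliminary reduction: it is equivalent to the full conjecture. Indeed, by Theorem~\ref{thm:pmi-c4c} the conjecture amounts to ``every nontrivial snark with $\pi(G)\ge 5$ and $\df{G}\ge 4$ is strong'', which is exactly the contrapositive of your Step~1.

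Your Kempe-chain sketch for Step~1 has a concrete gap and, worse, would prove too much. The deductions $\alpha_1=\alpha_2$, $\beta_1=\beta_2$ and the pairing of the $\alpha\beta$-alternating paths are correct, but producing a $3$-array with exactly three uncovered edges requires a third perfect matching whose symmetric difference with the failing colour class, restricted to $G-\{u,v\}$, consists of a single alternating path of length~$3$ from some $a_i$ to some $b_j$; you give no reason such a path must exist, and girth~$\ge 5$ blocks the obvious length-$1$ shortcut. More seriously, your argument nowhere invokes $\pi(G)\ge 5$. If it worked, it would establish that every weak nontrivial snark has defect~$3$, a statement strictly stronger than the conjecture. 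The paper's own framework casts doubt on this stronger claim: contracting an essential triangle (such triangles exist by \cite[Theorem~5.3]{KMNS-defred}) yields, via Theorem~\ref{thm:inflation} and Theorem~\ref{thm:decomp-def3}(iii), a cyclically $4$-edge-connected snark of defect at least~$4$ carrying a heavy pentagon, hence weak. Any valid argument for Step~1 must therefore use $\pi(G)\ge 5$ in an essential way, and your outline offers no indication of how.
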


This conjecture does not extend to snarks with low cyclic
connectivity. Indeed, snarks arising from
Theorem~\ref{thm:main} have perfect matching index $5$, but
they cannot be strong since each doubly covered edge in a
hexagonal core is easily seen to be non-removable.

\subsection{}
There exist several constructions of infinite families of
graphs with covering ratio reaching the expected maximum of
$7/5$. All the members of these families have cyclic
connectivity 2 or 3 (\cite{BJJ, MS_SCC}). We believe that the
following is true.

\begin{conjecture}
{\rm Up to isomorphism, the Petersen graph is the only
cyclically $4$-edge-connected cubic graph with cycle covering
ratio $7/5$.}
\end{conjecture}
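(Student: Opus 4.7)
The plan is to combine the results established earlier in the paper with a case split on the two invariants that have driven our analysis, namely the perfect matching index $\pi(G)$ and the colouring defect $\df{G}$. As a first reduction, Steffen's Theorem~3.1 in~\cite{S2} gives $\scc(G) = 4/3\cdot|E(G)|$ whenever $\pi(G)\le 4$, so the cycle covering ratio of any such graph is $4/3 < 7/5$. Consequently every cyclically $4$-edge-connected cubic graph $G$ reaching the ratio $7/5$ must be a snark with $\pi(G)\ge 5$.

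Next, I would split on the colouring defect. If $\df{G}=3$, then cyclic $4$-edge-connectivity together with Theorem~\ref{thm:pmi-c4c} forces $G$ to be isomorphic to the Petersen graph; Figure~\ref{fig:Pg-cycle-cover} (or the tightness in Theorem~\ref{thm:scc-def3}) confirms that $Pg$ does attain the ratio $7/5$. The conjecture therefore reduces to showing that no cyclically $4$-edge-connected snark $G$ with $\df{G}\ge 4$ satisfies $\scc(G)= 7/5 \cdot |E(G)|$.

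For the remaining case $\df{G}\ge 4$, my strategy is to prove the stronger fact that $\scc(G) < 7/5 \cdot |E(G)|$ strictly. A first slice comes from Theorem~\ref{thm:pentagons}: whenever $G$ contains a heavy pentagon, $\scc(G)\le 4/3\cdot|E(G)|+1$, which is strictly below $7/5\cdot|E(G)|$ as soon as $|V(G)|>10$. Hence any remaining potential extremal graph must lack heavy pentagons, meaning either $G$ has girth at least $6$, or it has girth $5$ but every edge lying on a pentagon is removable. For these two residual families I would look for local configurations --- analogous to a heavy pentagon or a hexagonal core --- whose presence allows the construction of a canonical short cycle cover in the spirit of Theorems~\ref{thm:scc-def3} and~\ref{thm:pentagons}, assembling a global cover from a fixed cover of the local structure and from $3$-edge-colourings of the remainder. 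The main obstacle is this residual step: establishing $\scc(G)<7/5\cdot|E(G)|$ for cyclically $4$-edge-connected snarks of defect at least $4$ without a heavy pentagon is essentially the $7/5$-conjecture itself in strict form, and already for girth-$6$ graphs such as the flower snarks $J_7$ and $J_9$ no cycle cover shorter than $4/3\cdot|E(G)|+2$ is currently known. This strict-inequality gap is where the proof, if it exists, will have to do genuinely new work.
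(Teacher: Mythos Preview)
This statement is presented in the paper as an open \emph{conjecture}, not a theorem; the paper offers no proof whatsoever, only the remark that all known infinite families attaining covering ratio $7/5$ have cyclic connectivity $2$ or~$3$. There is therefore nothing to compare your attempt against.

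Your reduction is sound as far as it goes: Steffen's result disposes of graphs with $\pi(G)\le 4$; Theorem~\ref{thm:pmi-c4c} forces a cyclically $4$-edge-connected snark with $\df{G}=3$ and $\pi(G)\ge 5$ to be the Petersen graph; and Theorem~\ref{thm:pentagons} handles snarks with a heavy pentagon once $|V(G)|>10$ (which is automatic for snarks of defect at least~$4$, the smallest such having $18$ vertices). But you yourself correctly identify that the residual case --- cyclically $4$-edge-connected snarks with $\df{G}\ge 4$ and no heavy pentagon --- is left entirely untouched, and that closing it would amount to proving a strict form of the $7/5$-conjecture for that class. That is not a technical wrinkle but the whole difficulty; nothing in the paper, and nothing in your outline, supplies an argument for it. So what you have written is an accurate survey of how far the paper's machinery reaches, together with an honest acknowledgement that it does not reach the conjecture. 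It is not a proof, and the paper does not claim one either.
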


\section*{Acknowledgements}
\noindent{}The authors of this article were partially supported
by the grant No.~APVV-23-0076 of the Slovak Research and
Development Agency. The first author and the third author were
also supported by the grant VEGA~2/0056/25 of Slovak Ministry
of Education. The second author and the fourth author were
supported by the grant VEGA~1/0727/22 of Slovak Ministry of
Education.

\bigskip

\bigskip{}\smallskip


\begin{thebibliography}{99}
\frenchspacing

\bibitem{AT} N. Alon, M. Tarsi, \emph{Covering multigraphs
    by simple circuits}, SIAM J. Algebraic Discrete Meth.
    \textbf{6} (1985), 345--350.

\bibitem{BJJ} J.-C. Bermond, B. Jackson, F. Jaeger,
    \emph{Shortest covering of graphs with cycles}, J. Combin.
    Theory Ser. B \textbf{35} (1983), 297--308.

\bibitem{Brad1} R. C. Bradley, \emph{A remark on noncolorable
    cubic graphs}, J. Combin. Theory Ser. B \textbf{24} (1978),
    311--317.

\bibitem{Brad2} R. C. Bradley, \emph{On the number of colorings
    of a snark minus an edge}, J. Graph Theory \textbf{51}
    (2006), 251--259.

\bibitem{Brad3} R. C. Bradley, \emph{Snarks from a K\'aszonyi
    perspective: A survey}, Discrete Appl. Math. \textbf{189}
    (2015), 8--29.

\bibitem{BGHM} G.~Brinkmann, J.~Goedgebeur, J.~H\"agglund,
    K.~Markstr\"om, \emph{Generation and properties of snarks},
    J.~Combin.~Theory Ser.~B \textbf{103} (2013), 468--488.

\bibitem{CCW} P.~J.~Cameron, A.~G.~Chetwynd, J.~J.~Watkins,
    \emph{Decomposition of snarks}, J.~Graph Theory
    \textbf{11} (1987), 13--19.

\bibitem{EM} L. Esperet, G. Mazzuoccolo, \emph{On cubic
    bridgeless graphs whose edge-set cannot be covered by four
    perfect matchings}, J. Graph Theory \textbf{77} (2014),
    144--157.

\bibitem{FMS-survey} M. A. Fiol, G. Mazzuoccolo, E. Steffen,
	\emph{Measures of edge-uncolorability of cubic graphs},
	Electron.~J.~Combin.~\textbf{25} (2018), $\#$P4.54.
	
\bibitem{F} H. Fleischner, \emph{Eulerian Graphs and Related
    Topics},Part~1, Vol.~1, Ann. Discrete Math 45, North
    Holland, Amsterdam, 1990.

\bibitem{Jae-strong} F. Jaeger,  \emph{A survey of the cycle
    double cover conjecture}. North-Holland Mathematics
    Studies \textbf{115} (1985), 1--12.

\bibitem{JSw} F. Jaeger, T. Swart,  \emph{Problem session},
    Ann. Discrete Math. \textbf{9} (1980), 304--305. 	

\bibitem{JT} U. Jamshy, M. Tarsi, \emph{Short cycle covers and
    the cycle double cover conjecture}, J. Combin. Theory Ser.
    B \textbf{56} (1992), 197--204.

\bibitem{KMNS-Berge} J. Karab\'a\v s, E. M\'a\v cajov\'a, R.
    Nedela, M. \v Skoviera, \emph{Berge's conjecture for cubic
    graphs with small colouring defect}, J. Graph Theory,
    accepted; arXiv:2210.13234 [math.CO].

\bibitem{KMNS-defred} J. Karab\'a\v s, E. M\'a\v cajov\'a, R.
    Nedela, M. \v Skoviera, \emph{Cubic graphs with colouring
    defect $3$}, Electron. J. Combin. \textbf{31} (2024),
    $\#$P1.51.

\bibitem{Ka1-ortho} L. K\'aszonyi, \emph{A construction of
    cubic graphs, containing orthogonal edges}, Ann. Univ. Sci.
    Budapest E\"otv\"os Sect. Math. \textbf{15} (1972), 81--87.

\bibitem{Ka2-nonplan} L. K\'aszonyi, \emph{On the nonplanarity
    of some cubic graphs}, Ann. Univ. Sci. Budapest E\"otv\"os
    Sect. Math. \textbf{15} (1972), 123--131.

\bibitem{Ka3-struct} L. K\'aszonyi, \emph{On the structure of
    coloring graphs}, Ann. Univ. Sci. Budapest E\"otv\"os Sect.
    Math. \textbf{16} (1973), 25--36.

\bibitem{LHLZ} S. Liu, R.-X. Hao, R. Luo, C.-Q. Zhang,
    \emph{Five-cycle double cover and shortest cycle cover}, J.
    Graph Theory \textbf{108} (2025), 39--49.

\bibitem{MS-Comb} E. M\'a\v cajov\'a, M. \v Skoviera,
    \emph{Sparsely intersecting perfect matchings in cubic
    graphs}, Combinatorica~\textbf{34} (2014), 61--94.

\bibitem{MS_P1} E. M\'a\v cajov\'a, M. \v Skoviera, \emph{Cubic
    graphs that cannot be covered with four perfect matchings},
    J.~Combin. Theory Ser.~B \textbf{150} (2021), 144--176.

\bibitem{MS_P2} E. M\'a\v cajov\'a, M. \v Skoviera,
    \emph{Perfect matching index versus circular flow number of
    a cubic graph}, SIAM J. Discrete Math. \textbf{35} (2021),
    1287--1297.

\bibitem{MS_SCC} E. M\'a\v cajov\'a, M. \v Skoviera,
    \emph{Cubic graphs with no short cycle covers}, SIAM J.
    Discrete Math. \textbf{35} (2021),  2223--2233.

\bibitem{NS-decred} R.~Nedela, M.~\v Skoviera,
    \emph{Decompositions and reductions of snarks}, J.~Graph
    Theory~\textbf{22} (1996), 253--279.

\bibitem{Raspaud-diss} A. Raspaud, \emph{Flots et couvertures
    par des cycles dans les graphes et les matro\"\i des},
    Th\`ese de $3^{\text{\`eme}}$ cycle, Universit\'e de
    Grenoble, 1985.

\bibitem{S1} E. Steffen, \emph{Classifications and
    characterizations of snarks}, Discrete~Math.~\textbf{188}
    (1998), 183--203.

\bibitem{S2} E. Steffen, \emph{$1$-Factor and cycle covers of
cubic graphs}, J.~Graph~Theory~\textbf{78} (2015), 195--206.
\end{thebibliography}
\end{document}